%% LyX 2.3.5.2 created this file.  For more info, see http://www.lyx.org/.
%% Do not edit unless you really know what you are doing.
\documentclass[10pt,oneside,english]{amsart}
\usepackage{mathpazo}
\usepackage[T1]{fontenc}
\usepackage[latin9]{inputenc}
\usepackage{geometry}
\geometry{verbose,tmargin=3.5cm,bmargin=3.5cm,lmargin=3.5cm,rmargin=3.5cm}
\synctex=-1
\usepackage{units}
\usepackage{enumitem}
\usepackage{amstext}
\usepackage{amsthm}
\usepackage{amssymb}
\usepackage{graphicx}

\makeatletter

%%%%%%%%%%%%%%%%%%%%%%%%%%%%%% LyX specific LaTeX commands.
%% Special footnote code from the package 'stblftnt.sty'
%% Author: Robin Fairbairns -- Last revised Dec 13 1996
\let\SF@@footnote\footnote
\def\footnote{\ifx\protect\@typeset@protect
    \expandafter\SF@@footnote
  \else
    \expandafter\SF@gobble@opt
  \fi
}
\expandafter\def\csname SF@gobble@opt \endcsname{\@ifnextchar[%]
  \SF@gobble@twobracket
  \@gobble
}
\edef\SF@gobble@opt{\noexpand\protect
  \expandafter\noexpand\csname SF@gobble@opt \endcsname}
\def\SF@gobble@twobracket[#1]#2{}

%%%%%%%%%%%%%%%%%%%%%%%%%%%%%% Textclass specific LaTeX commands.
\numberwithin{equation}{section}
\numberwithin{figure}{section}
      % auxiliary length 
\theoremstyle{plain}
\newtheorem{thm}{\protect\theoremname}[section]
\theoremstyle{plain}
\newtheorem{rem}[thm]{\protect\remarkname}
\theoremstyle{plain}
\newtheorem{lem}[thm]{\protect\lemmaname}
\theoremstyle{definition}
\newtheorem{defn}[thm]{\protect\definitionname}
\theoremstyle{plain}
\newtheorem{prop}[thm]{\protect\propositionname}
\theoremstyle{plain}
\newtheorem{cor}[thm]{\protect\corollaryname}
\theoremstyle{remark}
\newtheorem*{claim*}{\protect\claimname}

%%%%%%%%%%%%%%%%%%%%%%%%%%%%%% User specified LaTeX commands.

\makeatother

\usepackage{babel}
\providecommand{\claimname}{Claim}
\providecommand{\corollaryname}{Corollary}
\providecommand{\definitionname}{Definition}
\providecommand{\lemmaname}{Lemma}
\providecommand{\propositionname}{Proposition}
\providecommand{\remarkname}{Remark}
\providecommand{\theoremname}{Theorem}

\begin{document}
\title{Tilings in graphons}
\author{Jan Hladký}
\address{Institute of Mathematics, Czech Academy of Science, Žitná 25, 110~00,
Praha, Czechia. The Institute of Mathematics of the Czech Academy
of Sciences is supported by RVO:67985840.}
\thanks{\emph{Hladký:} The research leading to these results has received
funding from the People Programme (Marie Curie Actions) of the European
Union's Seventh Framework Programme (FP7/2007-2013) under REA grant
agreement number 628974.}
\email{hladky@math.cas.cz}
\author{Ping Hu}
\address{School of Mathematics, Sun Yat-sen University, Guangzhou, 510275,
China. This work has been carried out while at: Department of Computer
Science, University of Warwick, Coventry, CV4 7AL, United Kingdom.}
\thanks{\emph{Hu:} This work has received funding from the Leverhulme Trust
2014 Philip Leverhulme Prize of Daniel Kral and from the European
Research Council (ERC) under the European Union\textquoteright s Horizon
2020 research and innovation programme (grant agreement No 648509).}
\email{huping9@mail.sysu.edu.cn}
\author{Diana Piguet}
\address{Institute of Computer Science, Czech Academy of Sciences, Pod Vodárenskou
v\v{e}ží 2, 182~07 Prague, Czech Republic. With institutional support
RVO:67985807.}
\thanks{\emph{Piguet} was supported by the Czech Science Foundation, grant
number GJ16-07822Y}
\email{piguet@cs.cas.cz}
\begin{abstract}
We introduce a counterpart to the notion of tilings, that is vertex-disjoint
copies of a fixed graph $F$, to the setting of graphons. The case
$F=K_{2}$ gives the notion of matchings in graphons. We give a transference
statement that allows us to switch between the finite and limit notion,
and derive several favorable properties, including the LP-duality
counterpart to the classical relation between the fractional vertex
covers and fractional matchings/tilings, and discuss connections with
property testing.

As an application of our theory, we determine the asymptotically almost
sure $F$-tiling number of inhomogeneous random graphs $\mathbb{G}(n,W)$.
As another application, in an accompanying paper {[}Hladký, Hu, Piguet:
Komlós's tiling theorem via graphon covers, J. Graph Theory, 2019{]}
we give a proof of a strengthening of a theorem of Komlós {[}Komlós:
Tiling Turán Theorems, Combinatorica, 2000{]}.
\end{abstract}

\maketitle
\global\long\def\JUSTIFY#1{\mbox{\fbox{\tiny#1}}\quad}%

\global\long\def\SUPPORT{\mathrm{supp\:}}%

\global\long\def\SUPPORTPOSITIVE{\mathrm{supp}}%

\global\long\def\ESSINF{\mathrm{essinf}}%
\global\long\def\ESSSUP{\mathrm{esssup}}%

\global\long\def\TIL{\mathsf{til}}%

\global\long\def\FTIL{\mathsf{ftil}}%

\global\long\def\FCOV{\mathsf{fcov}}%

\global\long\def\DIST{\mathrm{dist}}%

\global\long\def\EXP{\mathbf{E}}%

\global\long\def\diffr{\:\mathsf{d}}%

\global\long\def\WEAKCONV{\overset{\mathrm{w}^{*}}{\;\longrightarrow\;}}%

\global\long\def\CUTNORMCONV{\overset{\|\cdot\|_{\square}}{\;\longrightarrow\;}}%

\section{Towards limits of tilings\label{sec:Intro}\protect\footnote{An extended abstract describing also the main results of this paper
appeared in the proceedings of Eurocomb~2017, \cite{DoHlHuPi:CombinatorialOptimization}.}}

The emergence of graph limit theories has brought numerous novel views
on classical problems in graph theory. More precisely, the problems
in which these theories helped concern comparing subgraph densities.
This is a very broad area lying in the heart of extremal graph theory.
In this explanatory section, we focus only on the applications of
dense graphs limits, firstly because these have been richer, and secondly
because this is the direction which we pursue in this paper. Two closely
related theories have emerged. Razborov's flag algebras method~\cite{Razborov2007}
represents an abstract approach to graph limits. The method gives
universal methods for calculations with subgraph densities (most notably
the semidefinite method and differential calculus), and has led to
the complete solutions of, or at least to a breakthrough progress
on, several prominent problems in the area. Of these breakthroughs
let us mention a result of Razborov~\cite{Razborov:Triangle} who
determined an optimal function $f:[0,1]\rightarrow[0,1]$ such that
if an $n$-vertex graph $G$ contains at least $\alpha{n \choose 2}$
edges then 
\begin{equation}
\mbox{\ensuremath{G} contains at least \ensuremath{\left(f(\alpha)+o(1)\right){n \choose 3}} triangles},\label{eq:RazborovLovaszSimonovits}
\end{equation}
thus resolving an old question of Lovász and Simonovits. On the other
hand, the theory developed by Borgs, Chayes, Lovász, Szegedy, Sós
and Vesztergombi~\cite{Borgs2008c,Lovasz2006} provides explicit
limit objects, the so-called graphons. This theory has been successfully
applied in various parts of graph theory (and in particular provided
insights into the properties of Szemerédi regularity partitions) and
random graphs. In extremal graph theory, the theory of graphons has
been used to prove a certain ``local'' version of Sidorenko's conjecture,~\cite{Lov:Sidorenko}.

As said, the graph limit theories have been very powerful in relating
subgraph densities. Some other concepts, like the one of the minimum
degree have been translated to the setting of graph limits and have
been explored. Extremal graph theory, however, is a much richer field,
and which other statements or features can be formulated in the language
of graph limits is interesting in its own right. In the present paper
we develop a theory of tiling in graphons. A \emph{tiling} $\mathcal{T}$
by a finite graph $F$ in another graph $G$ (``$F$-tiling in $G$'',
in short) is a collection of vertex-disjoint copies of $F$ (not necessarily
induced) in $G$. This concept is used also under the name of $F$-matching,
as we get the usual notion of matchings, when $F=K_{2}$. The \emph{size}
of the tiling $\mathcal{T}$ is simply the cardinality $|\mathcal{T}|$.
We write $\TIL(F,G)$ for the size of a maximum $F$-tiling in $G$.
For example, a ``tiling counterpart'' to~(\ref{eq:RazborovLovaszSimonovits})
would entail finding an optimal function $g:[0,1]\rightarrow[0,1]$
such that any $n$-vertex graph $G$ containing at least $\alpha{n \choose 2}$
edges satisfies 
\[
\mbox{\ensuremath{G} contains a tiling of \ensuremath{K_{3}} of size at least \ensuremath{\left(g(\alpha)+o(1)\right)\frac{n}{3}}}.
\]
Such a function $g$ was indeed determined in~\cite{ABHP:DensityCorHaj}.
Another example is the basic result of Erd\H{o}s and Gallai~\cite{Erdos1959},
where they determined the size of a matching guaranteed in a graph
of a given density.

There is a fractional relaxation of the notion of tilings. In that
notion, we put $[0,1]$-weights on copies of $F$ in $G$, and require
that the total weight at each vertex in $G$ is at most~1. However,
for us it is more convenient to choose a slightly different notion
in which we replace copies by homomorphic copies. More precisely,
we assume that the graph $F$ is on the vertex set $[k]$. We denote
by $\mathcal{F}_{F}(G)$ all the copies of $F$ in the graph $G$,
\begin{equation}
\mathcal{F}_{F}(G)=\left\{ \left(u_{1},u_{2},\ldots,u_{k}\right)\in V(G)^{k}:u_{i}u_{j}\in E(G)\mbox{ for each pair }ij\in E(F)\right\} \;.\label{eq:FFfinite}
\end{equation}
More precisely, the members of $\mathcal{F}_{F}(G)$ correspond to
ordered vertex-sets of $G$ that represent homomorphisms of $F$ into
$G$. With a slight abuse of notation we shall call members of $\mathcal{F}_{F}(G)$
\emph{copies of $F$ in $G$}. Given a copy $F'$ of $F$ in $G$,
the \emph{vertex set} $V(F')$ is defined in an obvious way. Note
that $|V(F')|\le k$, but equality need not hold.

A \emph{fractional $F$-tiling} in a finite graph $G$ is a weight
function $\mathfrak{t}:\mathcal{F}_{F}(G)\rightarrow[0,1]$ that satisfies
that for each $v\in V(G)$, 
\[
\ensuremath{\sum_{F'\in\mathcal{F}_{F}(G),V(F')\ni v}\mathfrak{t}(F')\le1}\;.
\]
The \emph{size }$\|\mathfrak{t}\|$ of \emph{$\mathfrak{t}$} is the
total weight of $\mathcal{F}_{F}(G)$, i.e., $\|\mathfrak{t}\|=\sum_{F'\in\mathcal{F}_{F}(G)}\mathfrak{t}(F')$.
A standard compactness argument shows, that there exists an $F$-tiling
of maximum size. We denote this maximum size by $\FTIL(F,G)$ and
call it the \emph{fractional $F$-tiling number}. Each $F$-tiling
$\mathcal{T}\subset\mathcal{F}_{F}(G)$ can be represented as a fractional
$F$-tiling by simply putting weight~1 on the copies of $\mathcal{T}$
and~0 on the copies of $\mathcal{F}_{F}(G)\setminus\mathcal{T}$.
Thus we have $\TIL(F,G)\le\FTIL(F,G)$.
\begin{rem}
To illustrate the notion, and in particular to emphasize the difference
between copies and homomorphic copies, let us compute $\FTIL(F,G)$
when $F$ is a five-cycle and $G$ is a triangle (so, $G$ is the
smaller of the graphs!). Clearly, $\FTIL(C_{5},K_{3})\le\frac{3}{5}$,
which is just a particular instance of the general bound $\FTIL(F,G)\le\frac{v(G)}{v(F)}$.
On the other hand, $1\mapsto1$, $2\mapsto2$, $3\mapsto1$, $4\mapsto2$,
$5\mapsto3$ is a homomorphism of $C_{5}$ to $K_{3}$. Now we can
consider 2 further variants of this homomorphism obtained by cyclic
shifts of $K_{3}$, and putting weight $\frac{1}{5}$ on each of these~3
homomorphic copies. This shows that~$\FTIL(C_{5},K_{3})\ge\frac{3}{5}$.
\end{rem}

The linear programming (LP) duality provides a very useful way of
expressing $\FTIL(F,G)$. To this end recall that a function $\mathfrak{c}:V(G)\rightarrow[0,1]$
is a \emph{fractional $F$-cover} of $G$ if $\sum_{v\in F'}\mathfrak{c}(v)\ge1$
for each copy $F'\in\mathcal{F}_{F}(G)$ (so, in the summation, we
view $F'$ as a multiset). The \emph{size} of the fractional cover
$\mathfrak{c}$ is the total weight of $V(G)$, and is denoted by
$\|\mathfrak{c}\|$. Again, a compactness argument shows that there
exists a fractional $F$-cover of $G$ of minimum size, which is denoted
by $\FCOV(F,G)$ and called the \emph{fractional $F$-cover number}.
Then the Duality Theorem asserts that $\FCOV(F,G)=\FTIL(F,G).$ Let
us note that showing the ``$\ge$'' direction is easy and the difficulty
lies in proving the ``$\le$'' direction.

\medskip{}

In this paper we define the notions of (fractional) $F$-tilings and
fractional $F$-covers for graphons (Definition~\ref{def:graphontiling}
and Definition~\ref{def:covergraphon}). As we show, when the graphon
in question is taken to be a representation of a finite graph then
there is a correspondence between the classical finite notion and
the new graphon notion (see Propositions~\ref{prop:FTILgraphTILgraphon}
and \ref{prop:FCOVgraphFCOV}). We relate these graphon notions to
the limits of the corresponding finite parameters (see Theorem~\ref{thm:lowersemicontgraphs},
Proposition~\ref{prop:uppercont}, Corrolary~\ref{cor:limesinferior})
and treat their continuity properties on the graphon space (see Theorem~\ref{thm:lowersemicont},
Theorem~\ref{thm:lowersemicont}, and Theorem~\ref{thm:limitOfCovers}).
We derive the LP-duality between the fractional $F$-tiling number
and fractional $F$-cover number for graphons (Theorem~\ref{thm:LPdualityGraphons}).

This paper is organized as follows. In Section~\ref{sec:Preliminaries}
we introduce common notation and provide preliminaries regarding functional
analysis, graph limits and the regularity lemma. In Section~\ref{sec:StatementOfTheResults}
we introduce all the main concepts in our theory and state the main
results. In Section~\ref{sec:Proofs} we give proofs of these results.
In Section~\ref{sec:RandomGraphs} we determine the $F$-tiling number
of inhomogeneous random graphs. In an accompanying paper~\cite{HlHuPi:Komlos}
we give another application of the theory by proving a strengthened
version of a theorem of Komlós~\cite{Komlos2000} regarding tilings
in finite graphs.

Let us also mention that Doležal and Hladký~\cite{DoHl:Polytons}
study in more detail the case $F=K_{2}$, that is, the case of matchings
in graphons.

\section{Notation and preliminaries\label{sec:Preliminaries}}

Suppose that $X$ is an arbitrary set. Given $\mathbf{x}=(x_{1},\ldots,x_{k})\in X^{k}$
and two numbers $1\le i<j\le k$, we write $\pi_{ij}(\mathbf{x})$
for the projection of $\mathbf{x}$, $\pi_{ij}(\mathbf{x})=(x_{i},x_{j})$.
We extend this to projecting sets, i.e., given $Y\subset X^{k}$,
we write $\pi_{ij}(Y)=\left\{ \pi_{ij}(\mathbf{x}):\mathbf{x}\in Y\right\} $.
We will use this notion only with regard to the inverse map. That
is, if $A\subset X^{2}$ then $\pi_{ij}^{-1}(A)=\left\{ \mathbf{x}\in X^{k}:(x_{i},x_{j})\in A\right\} $.

Given a function $f$ and a number $a$ we define its \emph{support}
$\SUPPORT f=\{x:f(x)\neq0\}$ and its variant $\SUPPORTPOSITIVE_{a}\:f=\{x:f(x)\ge a\}$.

Our notation follows~\cite{Lovasz2012}. Throughout the paper we
shall assume that $\Omega$ is an atomless Borel probability space
equipped with a measure $\nu$ (defined on an implicit $\sigma$-algebra).
The product measure on $\Omega^{k}$ is denoted by $\nu^{k}$. Recall
that a set is \emph{null} if it has zero measure.

\subsection{Banach\textendash Alaoglu Theorem}

As $\Omega$ is a Borel probability space, it is in particular a separable
measure space. It is well-known that then the Banach space $\mathcal{L}^{1}(\Omega)$
is separable (see e.g.~\cite[Theorem 13.8]{BrBrTh:RealAnalysis}).
The dual of $\mathcal{L}^{1}(\Omega)$ is $\mathcal{L}^{\infty}(\Omega)$. 

Let us now recall the defining property of the weak$^{*}$ topology
on $\mathcal{L}^{\infty}(\Omega)$: a sequence $f_{1},f_{2},\ldots\in\mathcal{L}^{\infty}(\Omega)$
converges weak$^{*}$ to a function $f\in\mathcal{L}^{\infty}(\Omega)$
if for each $g\in\mathcal{L}^{1}(\Omega)$ we have that $\int_{x\in\Omega}f_{n}(x)g(x)\diffr\nu\rightarrow\int_{x\in\Omega}f(x)g(x)\diffr\nu$.
Weak{*} convergence is denoted by $f_{n}\WEAKCONV f.$

These initial preparations can be used to verify that in the current
context the assumptions of the sequential Banach\textendash Alaoglu
Theorem (as stated for example in~\cite[Theorem 1.9.14]{Tao:EpsilonRoomI})
are fulfilled. Thus, let us state the theorem in the setting of $\mathcal{L}^{\infty}(\Omega)$.
\begin{thm}
\label{thm:BanachAlauglou}If $\Omega$ is a Borel probability space
then each sequence of functions of $\mathcal{L}^{\infty}(\Omega)$-norm
at most~1 contains a weak{*} convergent subsequence.
\end{thm}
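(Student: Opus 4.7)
The plan is to run the classical diagonal extraction that gives sequential weak$^{*}$ compactness of the unit ball whenever the predual is separable. The preceding paragraphs of the excerpt already supply the two inputs I need: separability of $\mathcal{L}^{1}(\Omega)$, and the identification $\mathcal{L}^{1}(\Omega)^{*}=\mathcal{L}^{\infty}(\Omega)$.

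First, I would fix a countable dense subset $\{g_{k}\}_{k\in\mathbb{N}}$ of $\mathcal{L}^{1}(\Omega)$. For a sequence $(f_{n})$ in the unit ball of $\mathcal{L}^{\infty}(\Omega)$, H\"older's inequality gives $\left|\int f_{n}g_{k}\right|\le\|g_{k}\|_{1}$ for every $n$ and $k$, so each scalar sequence $\bigl(\int f_{n}g_{k}\bigr)_{n}$ is bounded in $\mathbb{R}$. I would then extract successively: a subsequence along which $\int f_{n}g_{1}$ converges, then a further subsequence along which $\int f_{n}g_{2}$ converges, and so on; the diagonal subsequence $(f_{n_{j}})_{j}$ then satisfies that $\int f_{n_{j}}g_{k}$ converges in $\mathbb{R}$ as $j\to\infty$ for every fixed $k$.

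Second, I would promote this to convergence against every $g\in\mathcal{L}^{1}(\Omega)$ by an $\varepsilon/3$ argument exploiting the uniform bound $\|f_{n_{j}}\|_{\infty}\le 1$. Given $g\in\mathcal{L}^{1}(\Omega)$ and $\varepsilon>0$, choose $g_{k}$ with $\|g-g_{k}\|_{1}<\varepsilon$; then
\[
\left|\int f_{n_{j}}g-\int f_{n_{i}}g\right|\le 2\varepsilon+\left|\int f_{n_{j}}g_{k}-\int f_{n_{i}}g_{k}\right|,
\]
so $\bigl(\int f_{n_{j}}g\bigr)_{j}$ is Cauchy and hence has a limit $\Lambda(g)$. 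The map $g\mapsto\Lambda(g)$ is linear and satisfies $|\Lambda(g)|\le\|g\|_{1}$, so it defines an element of $\mathcal{L}^{1}(\Omega)^{*}$. Invoking the duality $\mathcal{L}^{1}(\Omega)^{*}=\mathcal{L}^{\infty}(\Omega)$ recalled in the excerpt produces $f\in\mathcal{L}^{\infty}(\Omega)$ with $\Lambda(g)=\int fg$ for all $g\in\mathcal{L}^{1}(\Omega)$, which is exactly the statement $f_{n_{j}}\WEAKCONV f$.

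I do not expect a genuine obstacle: separability of $\mathcal{L}^{1}(\Omega)$ (guaranteed by $\Omega$ being a Borel probability space) is doing all the essential work, and the only nontrivial bookkeeping is the passage from the dense countable set to all of $\mathcal{L}^{1}(\Omega)$ via the uniform $\mathcal{L}^{\infty}$-bound. The statement is, in effect, an instance of the general sequential Banach\textendash Alaoglu theorem cited in the excerpt, and the proof sketched above is precisely how that general theorem specialises here.
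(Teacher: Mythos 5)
Your proof is correct. The paper itself gives no proof of this theorem: it merely verifies that $\mathcal{L}^{1}(\Omega)$ is separable and that $\mathcal{L}^{1}(\Omega)^{*}=\mathcal{L}^{\infty}(\Omega)$, and then cites the general sequential Banach--Alaoglu theorem (Tao, Theorem~1.9.14). Your diagonal extraction plus $\varepsilon/3$ argument is exactly the standard proof of that cited theorem when the predual is separable, so you have simply unpacked what the paper delegates to the reference, using the same two inputs the paper isolates.
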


\subsection{Graphons}

Our graphons will be defined on $\Omega^{2}$, i.e., symmetric measurable
functions $W:\Omega^{2}\rightarrow[0,1]$. For a finite graph $G$
on vertex set $\{v_{1},\ldots,v_{n}\}$ we can partition $\Omega$
into $n$ sets $\Omega_{1},\ldots,\Omega_{n}$ of measure $\frac{1}{n}$
each and obtain a \emph{representation }of $G$. This graphon, always
denoted by~$W_{G}$, is defined to be one or zero on each square
$\Omega_{i}\times\Omega_{j}$ depending on whether the pair $v_{i}v_{j}$
does or does not form an edge, respectively. Note that~$W_{G}$ is
not uniquely defined; it depends on the partition $\Omega_{1},\ldots,\Omega_{n}$.
Given a bounded symmetric measurable function $W:\Omega^{2}\rightarrow\mathbb{R}$,
define its \emph{cut norm} $\|W\|_{\square}$ to be $\textrm{sup}_{S,T\subseteq\Omega}\left|\int_{(x,y)\in S\times T}W(x,y)\diffr\nu^{2}\right|$,
where the supremum is taken over all measurable subsets $S$ and $T$.
Let $W$ be a graphon and let $\varphi:\Omega\rightarrow\Omega$ be
a measure preserving map. Define $W^{\varphi}$ by $W^{\varphi}(x,y)=W(\varphi(x),\varphi(y)).$
Given two graphons $U$ and $W$, we define the \emph{cut distance}
of them by $\delta_{\square}(U,W)=\textrm{\textrm{inf}}_{\varphi}$
$\|U-W^{\varphi}\|_{\square}$, where the infimum is taken over all
measure preserving map $\varphi:\Omega\rightarrow\Omega$. We denote
$U\le W$ $\nu^{2}$-almost everywhere by $U\le W$.

Suppose that we are given an arbitrary graphon $W:\Omega^{2}\rightarrow[0,1]$
and a graph $F$ whose vertex set is $[k]$. We write $W^{\otimes F}:\Omega^{k}\rightarrow[0,1]$
for a function defined by $W^{\otimes F}(x_{1},\ldots,x_{k})=\prod_{1\le i<j\le k,ij\in E(F)}W(x_{i},x_{j})$,
which we call the \emph{density} of $F$ in $W$. Let $\text{\ensuremath{\mathcal{F}}}_{F}(W)=\SUPPORT W^{\otimes F}$.
Note that if we take~$W$ to be a graphon representation a finite
graph $G$ then there is a natural correspondence between $\text{\ensuremath{\mathcal{F}}}_{F}(W)$
and $\text{\ensuremath{\mathcal{F}}}_{F}(G)$ defined in~(\ref{eq:FFfinite}).

We shall need the following easy statement about graphons, which is
essentially given in~\cite[Lemma 4.1]{Lovasz2006}.
\begin{lem}
\label{lem:cutdistVSsubgraphCount}Suppose that $U$ and $W$ are
two graphons such that $\|W-U\|_{\square}<\delta$, and let $F$ be
a finite graph with $k$ vertices. Then for arbitrary measurable sets
$P_{1},\ldots,P_{k}\subset\Omega$ we have 
\[
\left|\int_{(x_{1},\ldots,x_{k})\in\prod_{i}P_{i}}U^{\otimes F}(x_{1},\ldots,x_{k})\diffr\nu^{k}-\int_{(x_{1},\ldots,x_{k})\in\prod_{i}P_{i}}W^{\otimes F}(x_{1},\ldots,x_{k})\diffr\nu^{k}\right|\le\binom{k}{2}\delta.
\]
\qed
\end{lem}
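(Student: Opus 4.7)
The plan is a telescoping over the edges of $F$. Enumerate $E(F)=\{e_1,\ldots,e_m\}$ arbitrarily, with $m\le\binom{k}{2}$, and for $0\le t\le m$ set
\[
H_t(x_1,\ldots,x_k)=\prod_{s\le t}W(x_{e_s})\,\prod_{s>t}U(x_{e_s}),\qquad I_t:=\int_{\prod_\ell P_\ell}H_t\,d\nu^k,
\]
writing $W(x_{ij})$ for $W(x_i,x_j)$. Then $H_0=U^{\otimes F}$ and $H_m=W^{\otimes F}$, so the triangle inequality reduces the task to showing $|I_t-I_{t-1}|\le\delta$ for each $t$; summing then yields the required $m\delta\le\binom{k}{2}\delta$ bound.

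For fixed $t$ with $e_t=ij$, write $Z=W-U$ and note that
\[
I_t-I_{t-1}=\int_{\prod_\ell P_\ell}Z(x_i,x_j)\,G_t(x)\,d\nu^k,
\]
where $G_t$ is the product of the remaining $m-1$ edge factors (each equal to either $U$ or $W$ on its edge). The crucial observation is that $E(F)\setminus\{ij\}$ splits into edges meeting $i$ only, edges meeting $j$ only, and edges meeting neither, which forces the factorization $G_t(x)=A(x_i,x')\,B(x_j,x')\,C(x')$ with $x'=(x_\ell)_{\ell\ne i,j}$ and $A,B,C$ measurable and $[0,1]$-valued. The restrictions $\mathbf{1}_{P_i}(x_i)$, $\mathbf{1}_{P_j}(x_j)$, and $\prod_{\ell\ne i,j}\mathbf{1}_{P_\ell}(x_\ell)$ can be absorbed into $A$, $B$, $C$ respectively without destroying either the product structure or the $[0,1]$ bounds.

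Applying Fubini then yields
\[
I_t-I_{t-1}=\int_{\Omega^{k-2}}C(x')\left[\int_{\Omega^2}Z(x,y)\,A(x,x')\,B(y,x')\,d\nu^2\right]d\nu^{k-2}(x').
\]
For each fixed $x'$ the inner bracket is $Z$ tested against a genuine product function $A(\cdot,x')\otimes B(\cdot,x')$ with both factors in $[0,1]$. The layer-cake identity $f=\int_0^1\mathbf{1}_{\{f\ge s\}}\,ds$ expresses such a product as a double average of indicators $\mathbf{1}_{S\times T}$, so the inner integral is a convex combination of quantities $\int_{S\times T}Z$, each bounded in modulus by $\|Z\|_\square<\delta$. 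Combining with $0\le C\le 1$ and $\nu^{k-2}(\Omega^{k-2})=1$ gives $|I_t-I_{t-1}|<\delta$, as required.

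The main subtle point is this final estimate: for a generic $[0,1]$-valued integrand $h(x_i,x_j)$ one would only obtain a Frieze\textendash Kannan-type bound $|\int Z h|\le 4\|Z\|_\square$, losing a factor of $4$. It is the product-in-$(x_i,x_j)$ structure of $G_t$, inherited from the three-way edge partition of $F$ with respect to $\{i,j\}$, that is responsible for the clean constant $\binom{k}{2}$ asserted in the lemma.
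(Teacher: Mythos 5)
Your proof is correct. The paper does not actually supply a proof here (the lemma is stated with \verb|\qed| and attributed to Lov\'asz--Szegedy), but your argument---telescoping over the edges of $F$, factoring the residual kernel $G_t$ as $A(x_i,x')B(x_j,x')C(x')$ using the three-way edge split with respect to $\{i,j\}$, and then applying the layer-cake decomposition to reduce $\int Z\,(A\otimes B)$ to a convex combination of $\int_{S\times T}Z$---is exactly the standard proof of the cited counting lemma, so it matches the approach the paper implicitly relies on.
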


\subsection{Removal lemma}

Let us state a graphon version of the Removal lemma.
\begin{lem}
\label{lem:removal}Suppose that $F$ is a graph on a vertex set $[k]$.
Then for every $\epsilon>0$, there exists a constant $\delta>0$
such that whenever $W:\Omega^{2}\rightarrow[0,1]$ is a graphon with
$\int_{(x_{1},\ldots,x_{k})\in\Omega^{k}}W^{\otimes F}\diffr\nu^{k}<\delta$,
then $W$ can be made $F$-free by decreasing it in a suitable way
such that it changes by at most $\epsilon$ in the $\mathcal{L}^{1}(\Omega^{2})$-distance.
\end{lem}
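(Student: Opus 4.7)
The plan is to adapt the classical Ruzsa--Szemer\'edi proof of the removal lemma to the graphon setting, combining the Szemer\'edi regularity lemma for graphons (part of the preliminaries of Section~\ref{sec:Preliminaries}) with a local version of the counting estimate in Lemma~\ref{lem:cutdistVSsubgraphCount}.

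Given $\epsilon > 0$ and the fixed graph $F$ on $[k]$ with $e := e(F)$ edges, choose an auxiliary parameter $\eta > 0$ (to be specified at the end). Apply the Szemer\'edi regularity lemma to $W$: one obtains an equipartition $\mathcal{P} = \{\Omega_1, \ldots, \Omega_m\}$ of $\Omega$ into parts of measure $1/m$ (with $m \leq m_0(\eta)$) such that all but at most $\eta\binom{m}{2}$ pairs $(\Omega_i, \Omega_j)$ are $\eta$-regular. Construct $\tilde{W}$ by zeroing out $W$ on three types of ``bad'' pair-blocks: the $\eta$-irregular pair-blocks (total $\nu^2$-mass at most $2\eta$), the pair-blocks on which the block-average $d_{ij} := \nu(\Omega_i)^{-1}\nu(\Omega_j)^{-1}\int_{\Omega_i\times\Omega_j} W$ is below $2\eta$ (contributing $\mathcal{L}^1$-mass at most $2\eta$), and the diagonal blocks $\Omega_i \times \Omega_i$ (total $\nu^2$-mass $1/m$). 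Since $W \in [0,1]$, we obtain $\|W - \tilde W\|_1 \leq 4\eta + 1/m$, which is $\leq \epsilon$ once $\eta < \epsilon/8$ and $m > 8/\epsilon$.

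The key step is showing that $\tilde W$ is $F$-free, i.e.\ $\int_{\Omega^k} \tilde W^{\otimes F} = 0$, provided $\int_{\Omega^k} W^{\otimes F} < \delta$ for a suitable $\delta$. If this integral were positive, there would exist a tuple $(i_1,\ldots,i_k) \in [m]^k$ of pairwise distinct indices such that every pair $(\Omega_{i_a}, \Omega_{i_b})$ with $ab \in E(F)$ is $\eta$-regular with density $d_{i_a i_b} \geq 2\eta$. A local counting lemma --- obtained by applying Lemma~\ref{lem:cutdistVSsubgraphCount} to $W$ versus the block-constant step graphon $W_\mathcal{P}$ after restricting and rescaling each regular pair --- then gives
\[
\int_{\prod_a \Omega_{i_a}} W^{\otimes F} \;\geq\; \Bigl(\prod_{ab\in E(F)} d_{i_a i_b} - \binom{k}{2}\eta\Bigr)\prod_a \nu(\Omega_{i_a}) \;\geq\; \bigl((2\eta)^e - \binom{k}{2}\eta\bigr) m^{-k},
\]
which exceeds $\eta^e m^{-k}/2$ once $\eta$ is small relative to $k$ and $e$. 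Declaring $\delta := \eta^e m^{-k}/2$ then contradicts the hypothesis, so no such tuple exists.

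The chief technical point is the order of parameters: $\eta$ is chosen small as a function of $\epsilon$, $k$, $e$; the regularity lemma returns $m = m_0(\eta)$; and only then is $\delta := \eta^e m^{-k}/2$ declared. A secondary subtlety is that the counting step requires \emph{local} cut-norm control on regular pairs: the global form of Lemma~\ref{lem:cutdistVSsubgraphCount} alone would force $\delta'$ to be much smaller than $m^{-k}$, which a mere Frieze--Kannan approximation cannot deliver --- this is precisely why a genuine Szemer\'edi partition is needed here.
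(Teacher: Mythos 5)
The paper actually offers no proof of Lemma~\ref{lem:removal}: it is stated as a known background fact (the graphon removal lemma, standard in the graph-limits literature), and the derived statements Lemma~\ref{lem:removalpartite} and Lemma~\ref{lem:removalpartiteNullify} follow it with omitted proofs. So there is no paper argument to compare against, and I will instead assess your attempt on its own.

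Your overall Ruzsa--Szemer\'edi strategy (regularize, prune irregular / low-density / diagonal blocks, then use a counting lemma to get a contradiction with $\int W^{\otimes F}<\delta$) is the right one, but as written there is a genuine error in the arithmetic at the counting step, and it stems from tying the density threshold to the regularity parameter. You remove all pair-blocks with block density $d_{ij} < 2\eta$ where $\eta$ is \emph{also} the regularity parameter, and then claim
\[
\prod_{ab\in E(F)} d_{i_a i_b}-\binom{k}{2}\eta \;\ge\;(2\eta)^{e}-\binom{k}{2}\eta\;>\;\tfrac12\eta^{e},
\]
for $\eta$ small. This inequality is false whenever $F$ has at least two edges: for $e\ge 2$ the term $(2\eta)^{e}=o(\eta)$ as $\eta\to 0$, so the expression $(2\eta)^{e}-\binom{k}{2}\eta$ is \emph{negative}, not merely small, once $\eta$ is small. (For concreteness, with $F=K_3$, $k=3$, $e=3$, $\eta=0.1$ one gets $0.008-0.3<0$.) Moreover one cannot instead take $\eta$ large to rescue the sign, because you also need $\eta<\epsilon/8$ to control the $\mathcal L^{1}$-mass removed. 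The two constraints are incompatible for small $\epsilon$. The same problem persists if you replace the additive error from Lemma~\ref{lem:cutdistVSsubgraphCount} by the usual multiplicative counting-lemma bound $\prod(d-2\eta)^{e}$: with $d=2\eta$ that is $0$.

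The fix is to decouple the two parameters, in the spirit of the classical proof. Introduce a density threshold $d$ \emph{independent} of $\eta$, say $d=\epsilon/4$; delete the diagonal blocks, the $\eta$-irregular blocks and the blocks with $d_{ij}<d$. The $\mathcal L^{1}$-cost is at most $\eta+d+1/m$, which is $<\epsilon$ once $\eta<\epsilon/4$ and $m>4/\epsilon$. \emph{Then} choose $\eta$ small enough that the counting step survives, specifically
\[
\eta \;<\; \min\Bigl\{\tfrac{\epsilon}{4},\; \tfrac{d^{e}}{2\binom{k}{2}}\Bigr\},
\]
so that $d^{e}-\binom{k}{2}\eta\ge d^{e}/2>0$. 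Only after this is $m=m_{0}(\eta,F)$ returned by the regularity lemma, and you can take $\delta:=\tfrac12 d^{e}m^{-k}$. Note the resulting order of quantifiers: $d$ depends on $\epsilon$; $\eta$ depends on $\epsilon$, $d$, and $F$; $m$ depends on $\eta$; and $\delta$ depends on $d$ and $m$. With this change your derivation of the counting estimate via the partite version of Lemma~\ref{lem:cutdistVSsubgraphCount} goes through (modulo the harmless constant-factor bookkeeping in translating $\eta$-regularity into a cut-norm bound on each pair).

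One smaller remark: you do not need the indices $(i_1,\dots,i_k)$ to be pairwise distinct. Removing the diagonal blocks only forces $i_a\ne i_b$ when $ab\in E(F)$; that suffices for the counting step, and demanding full distinctness would introduce unnecessary fuss.
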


\subsection{Partite versions of graphons\label{subsec:PartiteGraphons}}

In this section we introduce an auxiliary notion of partite graphons
which we shall use in Section~\ref{subsec:ProofPropUpperCont}. If
$A$ and $B$ are (not necessarily disjoint) sets then we write $A\amalg B$
for their disjoint union, i.e., for the set $\left(\{1\}\times A\right)\cup\left(\{2\}\times B\right)$.
If $A$ and $B$ are measurable subsets of a measure space $(\Omega,\nu)$
then we can view $A$ as a measure space of total measure $\nu(A)$
and $B$ as a measure space of total measure $\nu(B)$. Then, $A\amalg B$
can be viewed as a measure space of total measure $\nu(A)+\nu(B)$.

Suppose that $W:\Omega^{2}\rightarrow[0,1]$ is a graphon and $A_{1},\ldots,A_{k}$
are (not necessarily disjoint) measurable subsets of $\Omega$. Then
we can construct a \emph{$\left(A_{1},\ldots,A_{k}\right)$-partite
version of $W$}, which is a function $U:\left(A_{1}\amalg\ldots\amalg A_{k}\right)^{2}\rightarrow[0,1]$
defined by the formula
\[
U_{\restriction\left(\{i\}\times A_{i}\right)\times\left(\{j\}\times A_{j}\right)}=W_{\restriction A_{i}\times A_{j}},
\]
for each $i,j\in[k]$ distinct, and by setting $U_{\restriction\left(\{i\}\times A_{i}\right)\times\left(\{i\}\times A_{i}\right)}$
to~0 for each $i\in[k]$. $U$ is almost a graphon; the only reason
why it need not be is that the measure on $A_{1}\amalg\ldots\amalg A_{k}$
defined above, and henceforth denoted by $\nu_{A_{1}\amalg\ldots\amalg A_{k}}$,
need not have total measure~1. So, we shall use graphon terminology
even for partite versions.

If $X\subset\left(A_{1}\amalg\ldots\amalg A_{k}\right)^{2}$ then
the \emph{folded version of $X$} is simply the set of all $(x,y)\in\Omega$
such that there exist some $i,j\in[k]$ such that $\left(\left(i,x\right),(i,y)\right)\in X$.
Note that 
\begin{gather}
\mbox{the \ensuremath{\Omega^{2}}-measure of a folded set is at most }\nonumber \\
\mbox{the \ensuremath{\left(A_{1}\amalg\ldots\amalg A_{k}\right)^{2}}-measure of the original set}\label{eq:projected}
\end{gather}

When $k$ is fixed, the total measure of $\nu_{A_{1}\amalg\ldots\amalg A_{k}}$
is upper-bounded (by $k$), and thus it is easy to translate Lemma~\ref{lem:removal}
as follows.
\begin{lem}
\label{lem:removalpartite}Suppose that $F$ is a graph on a vertex
set $[k]$. Then for every $\epsilon>0$ there exists a constant $\delta>0$
such that the following holds. If $W:\Omega^{2}\rightarrow[0,1]$
is a graphon and $A_{1},\ldots,A_{k}$ are measurable subsets of $\Omega$
with the property that for the $\left(A_{1},\ldots,A_{k}\right)$-version
of $W$, denoted by $U$, we have $\int_{(x_{1},\ldots,x_{k})\in\left(A_{1}\amalg\ldots\amalg A_{k}\right)^{k}}U^{\otimes F}\diffr\nu_{A_{1}\amalg\ldots\amalg A_{k}}^{k}<\delta$
then $U$ can be made $F$-free by decreasing it in a suitable way
such that it changes by at most $\epsilon$ in the $\mathcal{L}^{1}\left(\left(A_{1}\amalg\ldots\amalg A_{k}\right)^{2}\right)$-distance.\qed
\end{lem}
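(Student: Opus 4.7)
The plan is to regard the $(A_1,\ldots,A_k)$-partite version $U$ as an ordinary graphon on the probability space obtained by renormalising the disjoint union $A_1\amalg\cdots\amalg A_k$, and then to invoke Lemma~\ref{lem:removal}. A mild complication arises because the natural renormalisation is by a constant that depends on the input $A_1,\ldots,A_k$ and could \emph{a priori} be arbitrarily small, so I would split off the degenerate case by hand.

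Write $M := \nu(A_1)+\cdots+\nu(A_k)$; note $0\le M\le k$. If $M\le\sqrt{\epsilon}$, then replacing $U$ by the zero function yields an $F$-free decrease, and the $\mathcal{L}^1$-cost is $\|U\|_{\mathcal{L}^1}\le M^2\le\epsilon$; this case requires no hypothesis on $\int U^{\otimes F}$. Otherwise $M>\sqrt{\epsilon}$, and I would work with the probability measure $\mu:=M^{-1}\nu$ on $A_1\amalg\cdots\amalg A_k$, which turns $U$ into a bona fide graphon. Apply Lemma~\ref{lem:removal} to $F$ with parameter $\epsilon/k^2$ to obtain a constant $\delta^*>0$ depending only on $\epsilon$ and $F$, and set $\delta:=\delta^*\epsilon^{k/2}$. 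Since $\mu^k=M^{-k}\nu^k$,
\[
\int U^{\otimes F}\,\mathrm{d}\mu^k \;=\; M^{-k}\int U^{\otimes F}\,\mathrm{d}\nu^k \;<\; M^{-k}\delta \;\le\; \epsilon^{-k/2}\cdot\delta^*\epsilon^{k/2} \;=\; \delta^*.
\]
Lemma~\ref{lem:removal} then yields a $U'\le U$ that is $F$-free and satisfies $\int|U-U'|\,\mathrm{d}\mu^2\le\epsilon/k^2$, and translating back via $\nu^2=M^2\mu^2$ gives $\int|U-U'|\,\mathrm{d}\nu^2\le M^2\cdot\epsilon/k^2\le\epsilon$, as required.

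The only subtlety, and thus the sole obstacle worth flagging, is that the renormalisation $\mu=\nu/M$ inflates the effective $\mathcal{L}^1$-error by a factor of $1/M^2$; if $M$ were permitted to be arbitrarily small, this factor would destroy the quantitative bound coming from Lemma~\ref{lem:removal}. The case split at the threshold $\sqrt{\epsilon}$ is engineered precisely so that the trivial estimate $\|U\|_{\mathcal{L}^1}\le M^2$ handles the small-$M$ regime, while in the complementary regime the inflation factor $1/M^2$ is bounded by $1/\epsilon$, an amount that is absorbed into the choice $\delta=\delta^*\epsilon^{k/2}$. Everything else is straightforward bookkeeping between $\nu$ and its rescaled version $\mu$.
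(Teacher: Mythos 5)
Your proof is correct and is the same reduction to Lemma~\ref{lem:removal} that the paper itself omits as routine (its prefatory remark that the measure of $A_1\amalg\cdots\amalg A_k$ is bounded above by $k$). One minor inaccuracy in your explanatory paragraph: translating from $\mu$ back to $\nu$ multiplies the $\mathcal{L}^1$-error by $M^2\le k^2$, so that error actually \emph{shrinks} when $M$ is small rather than being inflated by $1/M^2$; the quantity that does blow up and genuinely forces the case split is the $F$-density, since $\int U^{\otimes F}\,\mathrm{d}\mu^k = M^{-k}\int U^{\otimes F}\,\mathrm{d}\nu^k$, and your choice $\delta=\delta^*\epsilon^{k/2}$ is in fact calibrated to absorb exactly this $M^{-k}$ factor.
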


Let us make one easy and well-known comment about the Removal lemma
(this comment applies equally to Lemma~\ref{lem:removal} and~\ref{lem:removalpartite}).
When making a graphon $F$-free with a small change in the $\mathcal{L}^{1}$-distance
it only makes sense to nullify the graphon at each point where its
value is changed. In particular, when positive values of the graphon
are uniformly lower bounded, the nullification occurs on a set of
small measure. This is stated in the lemma below.
\begin{lem}
\label{lem:removalpartiteNullify}Suppose that $F$ is a graph on
a vertex set $[k]$. Then for every $\alpha>0$ and $d>0$ there exists
a constant $\beta>0$ such that the following holds. If $W:\Omega^{2}\rightarrow\{0\}\cup[d,1]$
is a graphon and $A_{1},\ldots,A_{k}$ are measurable subsets of $\Omega$
with the property that for the $\left(A_{1},\ldots,A_{k}\right)$-version
of $W$, denoted by $U$, we have $\int_{(x_{1},\ldots,x_{k})\in\left(A_{1}\amalg\ldots\amalg A_{k}\right)^{k}}U^{\otimes F}\diffr\nu_{A_{1}\amalg\ldots\amalg A_{k}}^{k}<\beta$
then $U$ can be made $F$-free by nullifying it on a suitable set
of $\nu_{A_{1}\amalg\ldots\amalg A_{k}}^{2}$-measure at most $\alpha$.\qed
\end{lem}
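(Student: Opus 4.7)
The plan is to reduce Lemma~\ref{lem:removalpartiteNullify} to Lemma~\ref{lem:removalpartite} by exploiting the gap between $0$ and $d$ in the range of $W$: an $\mathcal{L}^{1}$-change of size $\epsilon$ that is realized by \emph{zeroing out} $U$ can affect a set of measure at most $\epsilon/d$, because on every nullified pair $U$ is already at least $d$. Concretely, given $\alpha$ and $d$ I would set $\epsilon:=\alpha d$, let $\delta$ be the constant produced by Lemma~\ref{lem:removalpartite} for this $\epsilon$, and take $\beta:=\delta$. Under the hypothesis $\int U^{\otimes F}<\beta$, Lemma~\ref{lem:removalpartite} then supplies an $F$-free $U'\le U$ with $\|U-U'\|_{1}\le\epsilon$.

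The next step is to convert this (possibly only partial) decrease into a genuine nullification. I would define
\[
N:=\{(x,y):U(x,y)>0 \text{ and } U'(x,y)=0\}
\]
and set $\widetilde U:=U\cdot\mathbf{1}_{N^{c}}$, so that $\widetilde U$ is obtained from $U$ by nullification exactly on $N$. On $N$ the range assumption forces $U\ge d$, hence
\[
d\cdot\nu^{2}(N)\le\int_{N}U=\int_{N}(U-U')\le\|U-U'\|_{1}\le\epsilon=\alpha d,
\]
so $\nu^{2}(N)\le\alpha$ as required.

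Finally I would check that $\widetilde U$ is $F$-free. If some $k$-tuple $(x_{1},\dots,x_{k})$ satisfied $\widetilde U^{\otimes F}(x_{1},\dots,x_{k})>0$, then for every $ij\in E(F)$ one would have $U(x_{i},x_{j})>0$ and $(x_{i},x_{j})\notin N$; by the definition of $N$ this forces $U'(x_{i},x_{j})>0$ for each $ij\in E(F)$, contradicting the $F$-freeness of $U'$. The only real content — and the substance of the remark preceding the lemma — is the passage from an arbitrary decrease to an outright nullification of $U$, and the key trick is to define $N$ via the zero set of $U'$ rather than the set on which $U$ and $U'$ merely differ: this is exactly what makes both the measure bound and the $F$-freeness argument go through, and I do not anticipate any further obstacles.
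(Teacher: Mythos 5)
Your proof is correct and is exactly the deduction the paper intends: it elaborates the remark preceding the lemma ("when positive values of the graphon are uniformly lower bounded, the nullification occurs on a set of small measure"), which is why the paper marks the lemma with a \qed and no written proof. The one genuinely useful observation you add is that one should nullify on $N=\{U>0,\,U'=0\}$ rather than on the larger set $\{U\neq U'\}$; on the latter the difference $U-U'$ may be small, so the bound $d\cdot\mu(N)\le\int_N(U-U')\le\epsilon$ would fail, whereas on $N$ one has $U-U'=U\ge d$. (The $F$-freeness argument would go through with either choice; it is the measure estimate that forces yours.)
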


\subsection{Regularity lemma}

Here, we introduce Szemerédi's Regularity Lemma in a form that is
convenient for our later purposes. Recall that the \emph{density}
of a bipartite graph with colour classes~$A$ and~$B$ is defined
as $d(A,B)=\frac{e(A,B)}{|A||B|}$. We say that $(A,B)$ forms an
\emph{$\epsilon$-regular pair} if $|d(A,B)-d(X,Y)|<\epsilon$ for
each $X\subset A$ and $Y\subset B$ with $|X|\ge\epsilon|A|$ and
$|Y|\ge\epsilon|B|$. Suppose that~$G$ is a graph on a vertex set
$V$. Let $\mathcal{V}=\left\{ V_{0},V_{1},\ldots,V_{\ell}\right\} $
be a partition of $V$. We say that a subgraph $H$ of $G$ is an
\emph{$(\epsilon,d)$-regularization} of $G$ if
\begin{itemize}
\item $V(H)=V$,
\item $|V_{0}|<\epsilon n$,
\item for each $1\le i<j\le\ell$, we have that $|V_{i}|=|V_{j}|$,
\item for each $1\le i<j\le\ell$, we have that the density of the pair
$(V_{i},V_{j})$ in the graph $H$ is either~0 or at least $d$,
and the pair is $\epsilon$-regular in $H$,
\item there are no edges in $H$ incident to any vertex of $V_{0}$, 
\item there are no edges in the graphs $H[V_{1}],H[V_{2}],\ldots,H[V_{\ell}]$,
and
\item $e(H)\ge e(G)-2d|V|^{2}$.
\end{itemize}
In the setting above, the sets $V_{1},\ldots,V_{\ell}$ are called
\emph{clusters}, and the number $\ell$ is called the \emph{complexity}
of the regularization. The Szemerédi Regularity Lemma then can be
stated as follows.
\begin{lem}
\label{lem:RegularityLemma}For each $d,\epsilon>0$ there exists
a number $L$ such that any graph admits an $(\epsilon,d)$-regularization
of complexity at most $L$.
\end{lem}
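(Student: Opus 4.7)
The plan is to deduce this cleaned-up form of the regularity lemma from the standard Szemerédi Regularity Lemma by subsequent edge deletions. The statement in the lemma differs from the plain SRL in three ways: it requires $H$ to be a subgraph of $G$ with no edges inside clusters or at $V_0$, that surviving pairs be either empty or of density at least $d$, and that only $2d|V|^2$ edges be lost in total.

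First I would invoke the standard Szemerédi Regularity Lemma with regularity parameter $\epsilon' = \min\{\epsilon, d\}$ (and insisting on a lower bound $\ell_0$ on the number of clusters, where $\ell_0$ will be chosen so that $1/\ell_0 \le d$). This yields an equipartition $V_0, V_1, \ldots, V_\ell$ of $V(G)$ with $|V_0|<\epsilon' n$, all clusters of common size $m := |V_1| = \cdots = |V_\ell|$, and at most $\epsilon' \binom{\ell}{2}$ irregular pairs. The complexity~$\ell$ is bounded by some constant $L=L(\epsilon,d)$ coming from the SRL.

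Next I would define $H$ by removing from $G$ the following four groups of edges: \textbf{(i)} every edge incident to $V_0$, \textbf{(ii)} every edge with both endpoints in the same cluster $V_i$, \textbf{(iii)} every edge in an irregular pair $(V_i,V_j)$, \textbf{(iv)} every edge in a regular pair $(V_i,V_j)$ whose $G$-density is strictly less than $d$. The resulting $H$ is easily checked to satisfy all the listed structural properties: it is spanning, it has no edges at $V_0$ or inside clusters, and on each surviving pair $(V_i,V_j)$ it coincides with $G$ (hence is $\epsilon$-regular with density $\ge d$), while the remaining pairs carry no edges.

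The only quantitative point is the edge count. The edges removed in step (i) number at most $|V_0|\, n \le \epsilon' n^2 \le d n^2/2$ (after shrinking $\epsilon'$ if necessary); step (ii) costs at most $\ell\binom{m}{2}\le n^2/(2\ell)\le dn^2/2$ provided $\ell\ge 1/d$; step (iii) costs at most $\epsilon'\binom{\ell}{2} m^2\le \epsilon' n^2/2 \le d n^2/2$; and step (iv) costs at most $\binom{\ell}{2}\cdot d \cdot m^2 \le dn^2/2$. Summing, the total loss is at most $2dn^2$, as required. The constants $\epsilon'$ and $\ell_0$ only need to be chosen so small that each of the four terms sits under $d n^2/2$; this is a routine choice independent of $G$, which yields the final bound $L$. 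The whole argument is standard bookkeeping on top of the SRL; there is no substantial obstacle.
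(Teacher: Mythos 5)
The paper states Lemma~\ref{lem:RegularityLemma} without proof, treating it as a standard packaging of the Szemer\'edi Regularity Lemma; your proposal is exactly the standard derivation one would supply. Your four-group edge deletion and bookkeeping are correct (with the minor, acknowledged adjustment that $\epsilon'$ should be taken at most $d/2$ so the $V_0$-term fits under $dn^2/2$, and that $\ell_0\ge 1/d$ is imposed so the within-cluster term is small), and pairs of density zero are trivially $\epsilon$-regular, so all the required structural properties hold. Nothing to add.
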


Suppose that $G$ is a graph and $H$ is a $(\epsilon,d)$-regularization
of $G$ for a partition $\mathcal{V}=\left\{ V_{0},V_{1},\ldots,V_{\ell}\right\} $.
We can then create the \emph{cluster graph} $R$ corresponding to
the pair $(H,\mathcal{\mathcal{V}})$. This is an edge-weighted graph
on the vertex set $[\ell]$ in which an edge $ij$ is present if the
bipartite graph $H[V_{i},V_{j}]$ has positive density. The weight
on the edge $ij$ in $R$ is then the density of $H[V_{i},V_{j}]$.
A standard and well-known fact states that if $H$ is an $(\epsilon,d)$-regularization
of a graph~$G$, and~$R$ is the corresponding cluster graph, then
for the representations $W_{G}$ and $W_{R}$ we have that 
\begin{equation}
\DIST_{\square}(W_{G},W_{R})\le4d+2\epsilon\;.\label{eq:distGraphClustergraph}
\end{equation}

The so-called Slicing Lemma tells us that a subpair $(X,Y)$ of an
$\epsilon$-regular pair $(A,B)$ is $\left(\nicefrac{\epsilon}{x}\right)$-regular,
if $|X|\ge x|A|$ and $|Y|\ge x|B|$. This fact, however, is useless
when we take~$x$ smaller than~$\epsilon$. The following lemma
shows that we can still save the situation if we take~$X$ and~$Y$
at random. The lemma is a weak version of the the main result of \cite{GKRS:InheritRegularity}.
There, the statement (\cite[Theorem 3.6]{GKRS:InheritRegularity})
is given even in the setting of so-called sparse regular pairs which
is a generalization of the regularity concept which we do not need
in the present paper.
\begin{lem}
\label{lem:randomsubpair}Suppose that $\beta,\epsilon_{1}>0$ are
given. Then there exist numbers $\epsilon_{0}>0$ and $C>0$ such
that for every $\epsilon_{0}$-regular pair $(A,B)$ of an arbitrary
density $d=d(A,B)$ and for any $a,b>\nicefrac{C}{d}$, the number
of pairs $(A',B')$, $A'\subset A$, $B'\subset B$, $|A'|=a$, $|B'|=b$
which induce $\epsilon_{1}$-regular pairs of density at least $d-\epsilon_{1}$
is at least 
\[
\exp(-\beta^{a}-\beta^{b})\binom{|A|}{a}\binom{|B|}{b}\;.
\]
\end{lem}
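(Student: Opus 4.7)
The plan is to recast the counting statement probabilistically: draw $A' \subset A$ and $B' \subset B$ independently and uniformly among subsets of sizes $a$ and $b$. I would show that $(A', B')$ fails to be $\epsilon_1$-regular of density at least $d - \epsilon_1$ with probability at most $1 - \exp(-\beta^a - \beta^b)$, which for small $\beta$ is comparable to $\beta^a + \beta^b$. The constants would be fixed in the order $\epsilon_0 \ll \beta, \epsilon_1$, then $C$ large in terms of $\log(1/\beta)$ and $1/\epsilon_0$.

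First I would control the overall density $d(A', B')$. An Azuma-type bound applied to the edge-exposure martingale --- exposing $A'$ one vertex at a time, then $B'$ one vertex at a time, each step changing $e(A', B')$ by at most $\max(a, b)$ --- yields $|d(A', B') - d| < \epsilon_1/3$ except with probability at most $\exp(-c \epsilon_1^2 ab)$. The hypothesis $a, b > C/d$ with $C$ sufficiently large makes this exponent dominate $(a + b) \log(1/\beta)$, so this tail is negligible compared to $\beta^a + \beta^b$.

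Second, to upgrade to full $\epsilon_1$-regularity, I would show that with probability at least $1 - (\beta^a + \beta^b)/2$, every $X' \subset A'$ and $Y' \subset B'$ with $|X'| \ge \epsilon_1 a$, $|Y'| \ge \epsilon_1 b$ satisfies $|d(X', Y') - d| < \epsilon_1/3$; combined with Step~1 and the triangle inequality this gives $\epsilon_1$-regularity. I would enumerate over sizes $(s,t)$ and over pairs $(X, Y)\in\binom{A}{s}\times\binom{B}{t}$ with $|d(X, Y) - d| \ge \epsilon_1/3$ and bound $\sum_{(X,Y)\text{ bad}}\Pr[X\subset A', Y\subset B']$, using the identity $\Pr[X \subset A', Y \subset B'] = \binom{a}{s}\binom{b}{t}/(\binom{|A|}{s}\binom{|B|}{t})$. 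If $s \ge \epsilon_0|A|$ and $t \ge \epsilon_0|B|$, then $\epsilon_0$-regularity of $(A, B)$ directly forbids any bad $(X, Y)$. Otherwise, say $s < \epsilon_0|A|$: invoke the defect form of regularity, which asserts that for any fixed $Y$ with $|Y| \ge \epsilon_0|B|$, all but at most $2 \epsilon_0 |A|$ vertices $v \in A$ satisfy $\deg(v, Y) = (d \pm \epsilon_0)|Y|$. A bad $X$ must therefore concentrate a constant fraction of its vertices on the atypical set of size $2\epsilon_0|A|$, and a Chernoff estimate for uniform sampling of $X$ from $A$ (tight because $s \ge \epsilon_1 C/d$) bounds this probability by $\beta^{\Omega(s)}$. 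The remaining case with $|Y| < \epsilon_0|B|$ is handled symmetrically on the $Y$-side, or is reduced to the previous case by coupling $Y$ to an arbitrary superset $Y^{\star} \subset B$ of size $\lceil \epsilon_0 |B|\rceil$ and comparing $d(X, Y)$ with $d(X, Y^{\star})$ along an averaging-over-supersets argument.

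The main obstacle is the double-small regime $s < \epsilon_0|A|$ and $t < \epsilon_0|B|$, where $\epsilon_0$-regularity of $(A, B)$ offers no direct degree control on either side. Careful calibration is required so that, after summing the per-pair containment probability times the count of bad $(X, Y)$ across all scales $(s, t)$, the combined Chernoff tails dominate the enumeration factors $\binom{|A|}{s}\binom{|B|}{t}$ and the total still fits inside $\beta^a + \beta^b$. The calibration exploits $a, b > C/d$ to make the Chernoff factors tight enough to overcome the combinatorial enumeration, and the choice $\epsilon_0 \ll \beta$ to make the per-pair tails small enough to absorb the residual factors.
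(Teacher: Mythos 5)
The paper supplies no proof of this lemma: it is stated as a weak form of the main result of Gerke, Kohayakawa, R\"odl and Steger (their Theorem~3.6) and simply cited, so there is no in-text argument to compare yours against. Judged on its own terms, your sketch has a wrong estimate in Step~1 and an unclosed union bound in Step~2. The claimed Azuma rate $\exp(-c\epsilon_1^2 ab)$ is not what bounded differences give: with $a+b$ exposures and per-step differences at most $\max(a,b)$, the Azuma exponent is $\Theta\!\big(\epsilon_1^2 a^2b^2/((a+b)\max(a,b)^2)\big)$, which for $a\le b$ is $\Theta(\epsilon_1^2 a^2/b)$; even the sharper martingale (step size $b$ on the $A'$-exposures, $a$ on the $B'$-exposures) only yields $\exp(-\Theta(\epsilon_1^2\min(a,b)))$. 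That is not enough, since matching $\beta^a+\beta^b\approx\beta^{\min(a,b)}$ would force $\epsilon_1^2\gtrsim\log(1/\beta)$, which fails when $\epsilon_1$ is small and $\beta$ is not. The hypothesis $a,b>C/d$ saves you only if $d$ appears in the exponent --- i.e.\ one needs a Chernoff bound relative to the mean $\mu=dab$, of the form $\exp(-\Theta(\epsilon_1^2 ab/d))$, which does dominate $(a+b)\log(1/\beta)$ once $C\gtrsim\log(1/\beta)/\epsilon_1^2$. But $e(A',B')$ is not a sum of independent indicators, and obtaining that sharper rate is precisely where the GKRS argument does substantial work using the $\epsilon_0$-regularity of $(A,B)$; it does not fall out of a crude bounded-difference inequality.

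The union bound in Step~2 has the same flavor of gap. Writing $\Pr[X\subset A',\,Y\subset B']=\binom{a}{s}\binom{b}{t}\big/\big(\binom{|A|}{s}\binom{|B|}{t}\big)$, you need the fraction $N_{\mathrm{bad}}(s,t)\big/\big(\binom{|A|}{s}\binom{|B|}{t}\big)$ of bad pairs at each scale $(s,t)$ to decay like $\gamma^{s+t}$ \emph{simultaneously} in both coordinates, with $\gamma$ small enough (in terms of $\beta,\epsilon_1$) that $\sum_{s\ge\epsilon_1 a}\binom{a}{s}\gamma^s\lesssim\beta^a$ and likewise in $t$; otherwise the enumeration factors $\binom{a}{s}\binom{b}{t}$ overwhelm the estimate. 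The defect form of regularity gives exponential decay only in the coordinate whose witness set already has size at least $\epsilon_0|A|$ (resp.\ $\epsilon_0|B|$); it gives nothing in the double-small regime, which you flag but do not resolve --- the ``averaging over supersets'' remark is a heuristic, not a bound. This is exactly the regime where the known proofs replace the naive union bound by a degree and pair-degree counting argument tailored to regular pairs. The outline points in a reasonable direction, but both core estimates are either wrong or absent; at this level of difficulty the right move (as the authors themselves do) is to cite GKRS rather than re-derive the result.
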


Last, we need a weak version of the Blow-up Lemma. In our version,
we do not require perfect tiling.
\begin{lem}
\label{lem:weakblowup}Suppose that we are given a graph $F$ on a
vertex set $[k]$ and numbers $\gamma,d>0$. Then there exist numbers
$\epsilon>0$ and $m_{0}\in\mathbb{N}$ such that the following holds.
If $B$ is a graph whose vertices are partitioned into sets $V_{1},\ldots,V_{k}$
with $|V_{1}|=\ldots=|V_{k}|>m_{0}$ and which has the property that
for each $ij\in E(F)$ the pair $(V_{i},V_{j})$ is $\epsilon$-regular
with density at least $d$, then $\TIL(F,B)\ge(1-\gamma)|V_{1}|$.
\end{lem}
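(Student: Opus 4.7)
The plan is to prove Lemma~\ref{lem:weakblowup} by a standard greedy embedding, using only the Slicing Lemma and the classical ``key embedding lemma'' for regular pairs. Because we only need a near-perfect (not perfect) tiling, we can avoid invoking the full Blow-up Lemma of Koml\'os--S\'ark\"ozy--Szemer\'edi, and the entire argument reduces to bookkeeping a few parameters.

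First I fix the auxiliary parameters. Given $\gamma,d>0$, let $\epsilon_{1}>0$ and $m_{0}'\in\mathbb{N}$ be chosen (depending only on $F$ and $d$) so that any $k$-partite graph with parts $W_{1},\ldots,W_{k}$ of size at least $m_{0}'$ in which every pair $(W_{i},W_{j})$ with $ij\in E(F)$ is $\epsilon_{1}$-regular of density at least $d/2$ contains a copy of $F$ with vertex $i$ embedded in $W_{i}$. Such $\epsilon_{1}$ and $m_{0}'$ are produced by an iterated-restriction argument: for each $i\in[k]$ at most $k\epsilon_{1}|W_{i}|$ vertices of $W_{i}$ fail to have nearly the expected number of neighbours in $W_{j}$ for each $j$ with $ij\in E(F)$; pick a good vertex, replace each relevant $W_{j}$ by its neighbourhood (losing a factor of $d/2-\epsilon_{1}$ in size), use the Slicing Lemma to see that residual pairs remain regular of density at least $d/4$, and iterate $k$ times. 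This works provided $\epsilon_{1}$ is small enough relative to $d$ and $k$ and $m_{0}'\gtrsim (d/4)^{-k}$. Now set $\epsilon:=\min(\gamma\epsilon_{1}/2,\,d/4)$ and $m_{0}:=m_{0}'/\gamma$.

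Given $B$ with the partition $V_{1},\ldots,V_{k}$ satisfying the hypotheses of the lemma, I run the following greedy procedure. Maintain residual sets $V_{i}^{(t)}\subseteq V_{i}$ with $V_{i}^{(0)}=V_{i}$. As long as $|V_{i}^{(t)}|\ge\gamma|V_{1}|$ for every $i$, the Slicing Lemma applied to each pair $ij\in E(F)$ (using $|V_{i}^{(t)}|,|V_{j}^{(t)}|\ge\gamma|V_{1}|\ge(\epsilon/\epsilon_{1})|V_{1}|$) shows that $(V_{i}^{(t)},V_{j}^{(t)})$ is $\epsilon_{1}$-regular with density at least $d-\epsilon\ge d/2$. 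Moreover $|V_{i}^{(t)}|\ge\gamma|V_{1}|>\gamma m_{0}=m_{0}'$, so the choice of $\epsilon_{1}$ and $m_{0}'$ yields a copy of $F$ in the residual $k$-partite graph with one vertex in each $V_{i}^{(t)}$; add this copy to the tiling and remove its $k$ vertices from the residual sets. Each step contributes exactly one copy and removes exactly one vertex from each~$V_{i}$, so the procedure runs as long as $|V_{1}|-t\ge\gamma|V_{1}|$, i.e.\ for at least $(1-\gamma)|V_{1}|$ steps, giving $\TIL(F,B)\ge(1-\gamma)|V_{1}|$.

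There is no substantial obstacle; the only real care is in balancing the three scales $\epsilon\ll\gamma\epsilon_{1}\ll d$ and the size threshold $m_{0}\gg m_{0}'/\gamma$, so that the Slicing Lemma survives restriction to a $\gamma$-fraction of each part and the iterated-restriction embedding continues to find a copy of $F$ at every step of the greedy process. If one wanted to avoid invoking the embedding lemma as a black box, the same proof can be given directly by counting: under the above regularity and density assumptions a Counting-Lemma estimate produces $\Omega_{F,d}(\prod_{i}|V_{i}^{(t)}|)$ copies of $F$, certainly at least one, which suffices for the greedy step.
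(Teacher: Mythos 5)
The paper states Lemma~\ref{lem:weakblowup} without proof, treating it as a standard weak form of the Blow-up Lemma, so there is no ``paper's proof'' to compare against; I will just assess your argument directly. Your greedy scheme is the standard route to a \emph{near-perfect} $F$-tiling and it is sound: with $\epsilon=\min(\gamma\epsilon_1/2,\,d/4)$ the Slicing Lemma with ratio $x=\gamma$ keeps the residual pairs $(\epsilon/\gamma)\le\epsilon_1/2$-regular, $\epsilon$-regularity of the original pair guarantees a residual density at least $d-\epsilon\ge 3d/4>d/2$, and $m_0=m_0'/\gamma$ guarantees the residual parts never drop below the embedding lemma's size threshold while the process runs, i.e.\ as long as $|V_1|-t\ge\gamma|V_1|$. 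This yields the required $(1-\gamma)|V_1|$ copies of~$F$.

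The only imprecision is in your sketch of the ``iterated restriction'' proof of the Key Lemma: you say the Slicing Lemma shows the ``residual pairs remain regular of density at least $d/4$,'' but the regularity parameter does \emph{not} stay at $\epsilon_1$ under these restrictions --- after $\ell$ vertex choices, each shrinking the relevant parts by a factor of roughly $d/4$, the parameter is about $\epsilon_1(4/d)^{\ell}$, and the density loss also accumulates. The argument is still correct because (as you note) one can take $\epsilon_1$ small enough, say $\epsilon_1\ll (d/4)^{k+1}/k$, so that after all $k$ restrictions the parameter is still much smaller than the density; but as written the bookkeeping in that sentence is not quite honest. Since the Key Lemma is itself a standard black box you could simply cite, this is a presentational point rather than a gap, and the proof as a whole is correct.
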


\subsection{Subgraphons converging to a subgraphon}

In this section, we provide a proof of the following simple lemma
(which seems to be new).
\begin{lem}
\label{lem:subgraphonscoverge}Suppose that $\left(W_{n}\right)_{n}$
is a sequence of graphons on $\Omega$ converging to a graphon $W:\Omega^{2}\rightarrow[0,1]$
in the cut-norm. Let $U\le W$ be an arbitrary graphon. Then there
exists a sequence $\left(U_{n}\right)_{n}$ with $U_{n}\le W_{n}$
which converges to $U$ in the cut-norm.
\end{lem}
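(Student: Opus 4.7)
The plan is to define $U_n$ as a pointwise density-ratio rescaling of $W_n$, cut off where $W$ is small. For $\epsilon>0$, set
\[
U_n^{\epsilon}(x,y) \;:=\; \frac{U(x,y)}{W(x,y)}\, W_n(x,y)\, \mathbf{1}[W(x,y)\ge\epsilon]\;,
\]
with the convention $0/0=0$. Because $U\le W$, the factor $(U/W)\mathbf{1}[W\ge\epsilon]$ takes values in $[0,1]$, and hence $U_n^{\epsilon}\le W_n$ automatically. The advantage of this construction is that the pointwise ordering constraint is built in by design, and the whole argument reduces to analyzing a single error term.

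I would then estimate the discrepancy via
\[
U-U_n^{\epsilon} \;=\; U\cdot\mathbf{1}[W<\epsilon] \;+\; f_{\epsilon}\cdot(W-W_n)\;,
\]
where $f_{\epsilon} := (U/W)\,\mathbf{1}[W\ge\epsilon]$ is a fixed measurable function valued in $[0,1]$. The first summand has $\mathcal{L}^{1}$-norm (hence cut-norm) at most $\epsilon$, since $U\le W<\epsilon$ on its support.

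The crux is to prove $\|f_{\epsilon}(W-W_n)\|_{\square}\to 0$ as $n\to\infty$ for the fixed bounded $f_{\epsilon}$. The idea is to approximate $f_{\epsilon}$ in $\mathcal{L}^{1}(\Omega^{2})$ by a simple function $g=\sum_{i=1}^{m} c_i\,\mathbf{1}[A_i\times B_i]$ supported on finitely many measurable rectangles; this is possible because the algebra of finite disjoint unions of rectangles generates the product $\sigma$-algebra, and so such simple functions are $\mathcal{L}^{1}$-dense. For such $g$,
\[
\|g(W-W_n)\|_{\square} \;\le\; \sum_{i=1}^{m} |c_i|\,\|W-W_n\|_{\square} \;\longrightarrow\; 0\;,
\]
since the integral of $g(W-W_n)$ over any rectangle $S\times T$ decomposes into a finite sum of integrals of $(W-W_n)$ over the sub-rectangles $(S\cap A_i)\times(T\cap B_i)$. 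The residual is controlled via
\[
\|(f_{\epsilon}-g)(W-W_n)\|_{\square} \;\le\; \|(f_{\epsilon}-g)(W-W_n)\|_{\mathcal{L}^{1}} \;\le\; \|f_{\epsilon}-g\|_{\mathcal{L}^{1}}\cdot \|W-W_n\|_{\mathcal{L}^{\infty}} \;\le\; \|f_{\epsilon}-g\|_{\mathcal{L}^{1}}\;,
\]
which can be made arbitrarily small by choice of $g$. Combining gives $\limsup_{n}\|U-U_n^{\epsilon}\|_{\square}\le\epsilon$, and a standard diagonal argument over $\epsilon_k=1/k$ then produces a single sequence $U_n\le W_n$ with $\|U-U_n\|_{\square}\to 0$.

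The main obstacle is precisely this last approximation step: the cut norm is strictly weaker than the $\mathcal{L}^{1}$ norm, so multiplication by a bounded measurable function in two variables does not a priori preserve cut-norm convergence of a sequence. The density of rectangle-step functions in $\mathcal{L}^{1}(\Omega^{2})$ is the bridge that allows the fixed function $f_{\epsilon}$ to be pulled through the limit; beyond this, the argument is routine.
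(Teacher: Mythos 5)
Your proof is correct, and it takes a genuinely different (and arguably cleaner) route than the paper's. Both proofs are built on the same idea of rescaling $W_n$ by a density ratio $\approx U/W$, but the execution differs. The paper first fixes a common partition $\Omega=\Omega_1\sqcup\ldots\sqcup\Omega_k$ so that the block averages $U^{ij},W^{ij}$ approximate $U,W$ in $\mathcal{L}^1$ to error $\epsilon^2$, then defines $U_n$ on each block $\Omega_i\times\Omega_j$ by the block-averaged ratio $W_n\cdot U^{ij}/W^{ij}$, and runs a block-by-block computation to bound $\|U-U_n\|_\square$. You instead define $U_n^\epsilon=f_\epsilon\, W_n$ with the pointwise ratio $f_\epsilon=(U/W)\mathbf{1}[W\ge\epsilon]$, decompose the error as $U\cdot\mathbf{1}[W<\epsilon]+f_\epsilon(W-W_n)$, and then isolate the key analytic principle as a standalone lemma: multiplication by a fixed $\mathcal{L}^\infty(\Omega^2)$ function preserves cut-norm smallness, proved by $\mathcal{L}^1$-approximating the multiplier by finite linear combinations of indicators of measurable rectangles $A\times B$ and using $\|g(W-W_n)\|_\square\le\bigl(\sum_i|c_i|\bigr)\|W-W_n\|_\square$ together with $\|(f_\epsilon-g)(W-W_n)\|_\square\le\|f_\epsilon-g\|_1$. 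Both arguments ultimately rest on the $\mathcal{L}^1$-density of rectangle step functions, but the paper builds the stepping into the definition of $U_n$, whereas you build the pointwise ordering $0\le U_n^\epsilon\le W_n$ into the definition and push the stepping into the analysis. Your version has the advantage of factoring out a reusable multiplier lemma and of requiring no simultaneous approximation of $U$ and $W$ by a common step function; the paper's version is more explicit and self-contained. A minor expositional point: to make your approximation step fully rigorous you should note that, since $f_\epsilon\in[0,1]$, one can take the approximant $g$ with $c_i\in[0,1]$ on finitely many disjoint rectangles, so $\sum_i|c_i|$ is bounded by the number of rectangles and the estimate $\sum_i|c_i|\,\|W-W_n\|_\square\to0$ for fixed $g$ is immediate.
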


\begin{proof}
Let $\epsilon>0$ be arbitrary. There exist $k$ and a partition of
$\Omega$ into sets of measure $\nicefrac{1}{k}$ each, $\Omega=\Omega_{1}\sqcup\ldots\sqcup\Omega_{k}$,
such that there exist step-functions with steps $\Omega_{i}\times\Omega_{j}$
which approximate $U$ and $W$ up to an error at most $\epsilon^{2}$
in the $\mathcal{L}^{1}(\Omega^{2})$-norm (each one separately).
Such partition and approximation exist since squares generate the
sigma-algebra on $\Omega^{2}$. It is also well known that the said
step-functions can be taken to be constant $U^{ij}:=k^{2}\int_{(x,y)\in\Omega_{i}\times\Omega_{j}}U(x,y)\diffr\nu^{2}$
and $W^{ij}:=k^{2}\int_{(x,y)\in\Omega_{i}\times\Omega_{j}}W(x,y)\diffr\nu^{2}$
on each square $\Omega_{i}\times\Omega_{j}$. A routine calculation
gives that for all but at most $2\epsilon k^{2}$ pairs $(i,j)$ we
have
\begin{equation}
\int_{(x,y)\in\Omega_{i}\times\Omega_{j}}|U(x,y)-U^{ij}|\diffr\nu^{2}\le\nicefrac{\epsilon}{k^{2}}\quad\mbox{and}\quad\int_{(x,y)\in\Omega_{i}\times\Omega_{j}}|W(x,y)-W^{ij}|\diffr\nu^{2}\le\nicefrac{\epsilon}{k^{2}}\;.\label{eq:approxUandW}
\end{equation}
Let $\mathcal{P}\subset[k]^{2}$ be the set of the pairs that fail~(\ref{eq:approxUandW}).

Since $U^{ij}\le W^{ij}$ for each $i$ and $j$, when we define $U_{n}$
by
\[
U_{n}(x,y)=W_{n}(x,y)\cdot\frac{U^{ij}}{W^{ij}}\mbox{ for \ensuremath{(x,y)\in\Omega_{i}\times\Omega_{j},}}
\]
we indeed get a graphon with $U_{n}\le W_{n}$. Let us now bound $\|U-U_{n}\|_{\square}$.
Let $A,B\subset\Omega$ be arbitrary. We write $R^{ij}=(A\times B)\cap(\Omega_{i}\times\Omega_{j})$.
Then by the triangle inequality we have
\begin{align}
\left|\int_{(x,y)\in A\times B}U(x,y)\diffr\nu^{2}-\int_{(x,y)\in A\times B}U_{n}\diffr\nu^{2}\right| & \le\sum_{i,j}\left|\int_{(x,y)\in R^{ij}}U(x,y)\diffr\nu^{2}-\int_{(x,y)\in R^{ij}}U_{n}(x,y)\diffr\nu^{2}\right|\nonumber \\
 & =\sum_{i,j}\left|\int_{(x,y)\in R^{ij}}U(x,y)\diffr\nu^{2}-\int_{(x,y)\in R^{ij}}W_{n}(x,y)\cdot\frac{U^{ij}}{W^{ij}}\diffr\nu^{2}\right|\;.\label{eq:importantcalculation}
\end{align}
The total contribution of the pairs $(i,j)\in\mathcal{P}$ to the
right-hand side of~(\ref{eq:importantcalculation}) is at most $2\epsilon$.
Suppose now that $(i,j)\not\in\mathcal{P}$, and let us find an upper
bound for the corresponding term in~(\ref{eq:importantcalculation}).
\begin{eqnarray*}
 &  & \left|\int_{(x,y)\in R^{ij}}U(x,y)\diffr\nu^{2}-\int_{(x,y)\in R^{ij}}W_{n}(x,y)\cdot\frac{U^{ij}}{W^{ij}}\diffr\nu^{2}\right|\\
 & \overset{\eqref{eq:approxUandW}}{\le} & \left|\int_{(x,y)\in R^{ij}}U^{ij}(x,y)\diffr\nu^{2}-\int_{(x,y)\in R^{ij}}W_{n}(x,y)\cdot\frac{U^{ij}}{W^{ij}}\diffr\nu^{2}\right|+\nicefrac{\epsilon}{k^{2}}\\
 & \le & U^{ij}\left|\int_{(x,y)\in R^{ij}}1\diffr\nu^{2}-\int_{(x,y)\in R^{ij}}\frac{W_{n}(x,y)}{W^{ij}}\diffr\nu^{2}\right|+\nicefrac{\epsilon}{k^{2}}\\
 & \le & U^{ij}\left|\int_{(x,y)\in R^{ij}}1-\frac{1}{W^{ij}}\left(\|W_{n}-W\|_{\square}+\int_{(x,y)\in R^{ij}}W(x,y)\diffr\nu^{2}\right)\right|+\nicefrac{\epsilon}{k^{2}}\\
 & = & U^{ij}\left|\frac{1}{W^{ij}}\left(\|W_{n}-W\|_{\square}+\int_{(x,y)\in R^{ij}}\left(W(x,y)-W^{ij}\right)\diffr\nu^{2}\right)\right|+\nicefrac{\epsilon}{k^{2}}\\
 & \overset{\eqref{eq:approxUandW}}{\le} & U^{ij}\left(\frac{1}{W^{ij}}\|W_{n}-W\|_{\square}+\nicefrac{\epsilon}{k^{2}}\right)+\nicefrac{\epsilon}{k^{2}}\\
 & \le & \|W_{n}-W\|_{\square}+\nicefrac{2\epsilon}{k^{2}}\;.
\end{eqnarray*}
Thus, the total contribution of the terms $(i,j)\notin\mathcal{P}$
is at most $k^{2}\|W_{n}-W\|_{\square}+2\epsilon$ which is less than
$3\epsilon$ for large enough $n$. Consequently, $\left|\int_{(x,y)\in A\times B}U\diffr\nu^{2}-\int_{(x,y)\in A\times B}U_{n}\diffr\nu^{2}\right|\le5\epsilon$.

Thus, the proof proceeds by letting $\epsilon$ go to zero and diagonalizing.
\end{proof}

\section{The theory of tilings in graphons \label{sec:StatementOfTheResults}}

\subsection{A naive approach}

We want to gain some understanding what an $F$-tiling in a graphon
should be. Let us take our first motivation from the world of finite
graphs. Suppose that $F$ and $G$ are finite graphs, $F$ is on the
vertex set $[k]$. Then each $F$-tiling can be viewed as a set $\mathbf{X}\subset V(G)^{k}$
such that 
\begin{enumerate}[label=(\emph{\alph*})]
\item \label{enu:edges}for each $T\in\mathbf{X}$ we have that $T_{i}T_{j}$
forms an edge of $G$ for each $ij\in E(F)$, $i<j$, and
\item for each $v\in V(G)$ there is at most one pair $(T,i)\in\mathbf{X}\times[k]$
such that $v=T_{i}$.
\end{enumerate}
The size of the $F$-tiling $\mathbf{X}$ is obviously $|\mathbf{X}|$,
which can be rewritten using the projection $\pi_{1}:V(G)^{k}\rightarrow V(G)$
on the first coordinate as
\begin{equation}
|\mathbf{X}|=\left|\pi_{1}(\mathbf{X})\right|\;.\label{eq:usefirstprojection}
\end{equation}
 Let $A$ be the adjacency matrix of $G$. Then Condition~\ref{enu:edges}
can be rewritten as 
\begin{enumerate}[label=(\emph{\alph*}')]
\item for each $T\in\mathbf{X}$ we have that $\prod_{ij\in E(F),i<j}A_{T_{i}T_{j}}>0$.
\end{enumerate}
These conditions can be translated directly to graphons. That is,
we might want to say that a set $\mathbf{Y}\subset\Omega^{k}$ is
an $F$-tiling in a graphon $W:\Omega^{2}\rightarrow[0,1]$ if
\begin{itemize}
\item for each $1\le i<j\le k$ the map $\pi_{j}\circ\pi_{i}^{-1}$ is a
$\nu$-measure preserving bijection between $\pi_{i}(\mathbf{Y})$
and $\pi_{j}(\mathbf{Y})$,
\item for each $T\in\mathbf{Y}$ we have that $\prod_{ij\in E(F),i<j}W(T_{i},T_{j})>0$,
\item and for each $x\in\Omega$ there is at most one pair $(T,i)\in\mathbf{Y}\times[k]$
such that $x=T_{i}$. 
\end{itemize}
Using~(\ref{eq:usefirstprojection}), we would then say that $\nu(\pi_{1}(\mathbf{Y}$))
is the size of the $F$-tiling $\mathbf{Y}$. 

There is however a substantial problem with this approach in that
whether the condition $\prod_{ij\in E(F),i<j}W(T_{i},T_{j})>0$ holds
depends on values of $W$ of $\nu^{2}$-measure zero. The theory of
graphons cannot in principle achieve this level on sensitivity. Thus,
a different approach is needed.

\subsection{Introducing tilings in graphons\label{subsec:IntroducingTilings}}

As we saw in the previous section the main problem with the straightforward
graphon counterpart to $F$-tilings of finite graphs was that the
limiting object was too centralized; so centralized that all the information
was needed to be carried on a set of measure zero. Observe that in
a setting of a finite graph $G$, fractional $F$-tilings can be more
``spread out'' than integer $F$-tilings in the sense that the weight
of the latter is always supported on at most $\nicefrac{v(G)}{v(F)}$
many copies of $F$ while it can be supported on up to $\binom{v(G)}{v(F)}$
many copies in the former. Thus, the notion of fractional $F$-tilings
seems better admissible to a limit counterpart. Indeed, as we shall
see in Section~\ref{subsec:TilingsVSFractionalTilings}, in the world
of graphons there is no difference between $F$-tilings and fractional
$F$-tilings. Thus, we want to approach the notion of $F$-tilings
in graphons by looking at fractional $F$-tilings in finite graphs.

Suppose that $F$ and $G$ are finite graphs, $F$ is on the vertex
set $[k]$, the order of $G$ is $n$, and let $\mathfrak{t}_{G}:\mathcal{F}_{F}(G)\rightarrow[0,1]$
be a fractional $F$-tiling in $G$. Let $W_{G}:\Omega^{2}\rightarrow[0,1]$
be a graphon representation of $G$. Let $\left(\Omega_{u}\right){}_{u\in V(G)}$
be the partition of $\Omega$ corresponding to this representation.
We can then associate a measure $\mu$ on $\Omega^{k}$ corresponding
to $\mathfrak{t}_{G}$ by the defining formula
\begin{equation}
\mu(A)=\sum_{u_{1}u_{2}\cdots u_{k}\in\mathcal{F}_{F}(G)}\frac{\mathfrak{t}_{G}\left(u_{1}u_{2}\cdots u_{k}\right)}{n}\cdot\frac{\nu^{k}\left(\left(\Omega_{u_{1}}\times\Omega_{u_{2}}\times\cdots\times\Omega_{u_{k}}\right)\cap A\right)}{\nu^{k}\left(\Omega_{u_{1}}\times\Omega_{u_{2}}\times\cdots\times\Omega_{u_{k}}\right)}\;,\label{eq:tilingmeasure}
\end{equation}
for each measurable $A\subset\Omega^{k}$. In words, we introduce
a linear renormalization on $\mathfrak{t}_{G}$. Then the measure
$\mu$ spreads the total of $\frac{\mathfrak{t}_{G}\left(u_{1}u_{2}\cdots u_{k}\right)}{n}$
uniformly over each rectangle $\Omega_{u_{1}}\times\Omega_{u_{2}}\times\cdots\times\Omega_{u_{k}}$.
An example is given in Figure~\ref{fig:TilingMeasure}. 
\begin{figure}
\includegraphics[width=0.8\textwidth]{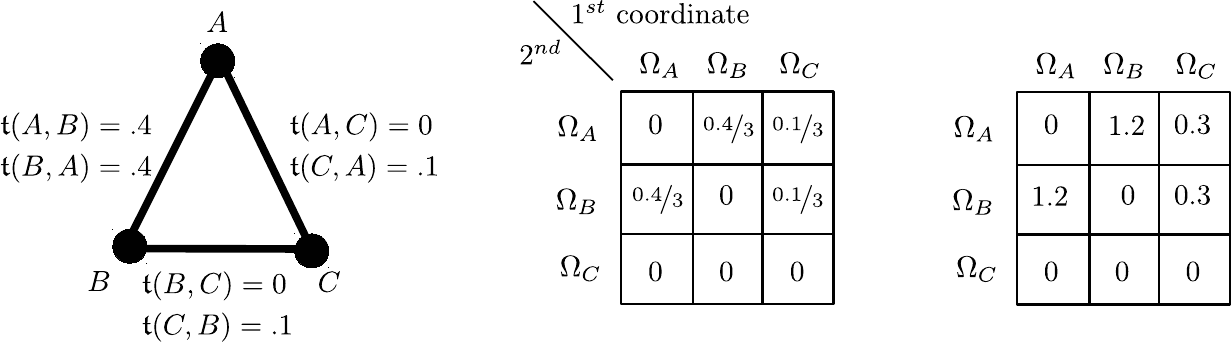}\caption{The left-hand picture shows a sample graph $G$ and a fractional $K_{2}$-tiling
in it. The middle picture shows the total masses of the nine rectangles
with respect to the measure $\mu$ defined by~(\ref{eq:tilingmeasure}).
The right-hand picture then shows the point-wise values of the Radon\textendash Nikodym
derivative $\frac{\mathrm{d}\mu}{\mathrm{d}\nu^{2}}$.}
\label{fig:TilingMeasure}
\end{figure}
The following properties of $\mu$ are obvious:
\begin{enumerate}
\item $\mu$ is absolutely continuous with respect to $\nu^{k}$,
\item $\mu$ is supported on a subset of the set $\text{\ensuremath{\mathcal{F}}}_{F}(W_{G})$,
\item for each $X\subset\Omega$ we have $\sum_{\ell=1}^{k}\mu\left(\prod_{i=1}^{\ell-1}\Omega\times X\times\prod_{i=\ell+1}^{k}\Omega\right)\le\nu(X)$. 
\end{enumerate}
Property~(1) allows us to take the Radon\textendash Nikodym derivative
$\frac{\mathrm{d}\mu}{\mathrm{d}\nu^{k}}$ of $\mu$. We define functions
that arise in this way as $F$-tilings.\footnote{As said, later we shall see that there is no distinction between integral
and fractional tilings for graphons. Thus, even though the motivation
for the current notion comes from \emph{fractional} tilings in finite
graphs, we call the resulting graphon concept simply \emph{tilings}.}
\begin{defn}
\label{def:graphontiling}Suppose that $W:\Omega^{2}\rightarrow[0,1]$
is a graphon, and that $F$ is a graph on the vertex set $[k]$. An
$\mathcal{L}^{1}(\Omega^{k})$-function $\mathfrak{t}:\Omega^{k}\rightarrow[0,+\infty)$
is called an \emph{$F$-tiling} in $W$ if 
\begin{equation}
\text{\ensuremath{\SUPPORT}\ensuremath{\mathfrak{t}\subset}\ensuremath{\mathcal{F}}}_{F}(W)\;,\label{eq:tilingsupport}
\end{equation}
 and we have for each $x\in\Omega$ that 
\begin{equation}
\sum_{\ell=1}^{k}\int_{(x_{1},\ldots x_{\ell-1},x_{\ell+1},\ldots,x_{k})\in\Omega^{k-1}}\mathfrak{t}(x_{1},\ldots,x_{\ell-1},x,x_{\ell+1},\ldots,x_{k})\diffr\nu^{k-1}\le1\;.\label{eq:IntegralAtMostOne}
\end{equation}
The \emph{size} of an $F$-tiling $\mathfrak{t}$ is $\|\mathfrak{t}\|=\int_{(x_{1},\ldots,x_{k})\in\Omega^{k}}\mathfrak{t}(x_{1},\ldots,x_{k})\diffr\nu^{k}$.
The \emph{$F$-tiling number} of $W$, denoted by $\TIL(F,W)$, is
the supremum of sizes over all $F$-tilings in $W$.
\end{defn}

A couple of remarks is in place. First, if $F$ and $F'$ are copies
of the same graph on the vertex set $[k]$ with the vertex-labels
permuted we can get an $F'$-tiling from an $F$-tiling by permuting
the corresponding coordinates. Second, observe that the definition
does not depend on the values of $W$, only on $\SUPPORT W$. Third,
we have $\TIL(F,W)\le\frac{1}{k}$. This is a counterpart to the finite
statement $\TIL(F,G)\le\frac{v(G)}{k}$. Fourth, the supremum in the
definition of $\TIL(F,W)$ need not be attained. An example is given
in Proposition~\ref{prop:examplenotattained}. Fifth, when $W_{G}$
is a representation of a finite graph $G$, we have $\FTIL(F,G)=v(G)\TIL(F,W_{G})$
(we emphasize that the relevant graph parameter is $\FTIL(F,G),$
not $\TIL(F,G)$). While this would easily follow from tools we develop
later (specifically, from Proposition~\ref{prop:FCOVgraphFCOV} and
Theorem~\ref{thm:LPdualityGraphons}), here we give a self-contained
proof.
\begin{prop}
\label{prop:FTILgraphTILgraphon}Suppose that $F$ and $G$ are finite
graphs and that $W_{G}$ is a graphon representation of $G$. Let
$n$ be the number of vertices of $G$. Then we have $\FTIL(F,G)=n\cdot\TIL(F,W_{G})$.
\end{prop}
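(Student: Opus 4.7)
The plan is to prove the two inequalities $\FTIL(F,G)\le n\cdot\TIL(F,W_G)$ and $\FTIL(F,G)\ge n\cdot\TIL(F,W_G)$ by constructing, for each tiling in one setting, a corresponding tiling in the other whose size differs by a factor of exactly $n$. The ``$\le$'' direction reuses the construction that has already been carried out in Section~\ref{subsec:IntroducingTilings}: starting from an optimal fractional $F$-tiling $\mathfrak{t}_G$ in $G$, the measure $\mu$ defined by~(\ref{eq:tilingmeasure}) is absolutely continuous with respect to $\nu^k$ by property~(1), so one sets $\mathfrak{t}:=\mathrm{d}\mu/\mathrm{d}\nu^k$. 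Property~(2) immediately gives~(\ref{eq:tilingsupport}), and~(\ref{eq:IntegralAtMostOne}) follows from property~(3) once one checks that for $x\in\Omega_v$ its left-hand side equals $\sum_{\ell=1}^{k}\sum_{F'\in\mathcal{F}_F(G),\,F'_\ell=v}\mathfrak{t}_G(F')$, which is at most $1$ because $\mathfrak{t}_G$ is a fractional tiling. The size comes out to $\|\mathfrak{t}\|=\mu(\Omega^k)=\|\mathfrak{t}_G\|/n=\FTIL(F,G)/n$, giving $\TIL(F,W_G)\ge\FTIL(F,G)/n$.

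For the converse direction, given any $F$-tiling $\mathfrak{t}$ in $W_G$, I would define a candidate finite fractional tiling $\mathfrak{t}_G:\mathcal{F}_F(G)\to[0,\infty)$ by
\[
\mathfrak{t}_G(u_1,\ldots,u_k):=n\int_{\Omega_{u_1}\times\cdots\times\Omega_{u_k}}\mathfrak{t}\,\mathrm{d}\nu^k\,.
\]
Integrating~(\ref{eq:IntegralAtMostOne}) over $x\in\Omega_v$ (of measure $\nicefrac{1}{n}$) and multiplying by $n$ yields the vertex constraint $\sum_{\ell=1}^{k}\sum_{F'_\ell=v}\mathfrak{t}_G(F')\le1$, which in particular forces each $\mathfrak{t}_G(F')\le1$ by restricting the sum to a single coordinate. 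Since $\SUPPORT\mathfrak{t}\subset\mathcal{F}_F(W_G)$ and $W_G$ takes values in $\{0,1\}$, the support of $\mathfrak{t}$ is contained in the disjoint union of the edge-rectangles $\Omega_{u_1}\times\cdots\times\Omega_{u_k}$ over $(u_1,\ldots,u_k)\in\mathcal{F}_F(G)$; hence $\|\mathfrak{t}_G\|=n\int_{\Omega^k}\mathfrak{t}\,\mathrm{d}\nu^k=n\|\mathfrak{t}\|$, so $\FTIL(F,G)\ge n\cdot\TIL(F,W_G)$.

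The main subtlety is the bookkeeping between the measure $\nicefrac{1}{n}$ of each $\Omega_u$ and the measure $\nicefrac{1}{n^k}$ of each rectangle $\Omega_{u_1}\times\cdots\times\Omega_{u_k}$, together with the support decomposition above; the normalising factor $\nicefrac{1}{n}$ in~(\ref{eq:tilingmeasure}) and the factor $n$ in the definition of $\mathfrak{t}_G$ are chosen precisely so that both the vertex constraints and the sizes match without additional slack. Using that $\FTIL(F,G)$ is actually attained (by compactness) and taking a supremum over $\mathfrak{t}$ in the second direction yields the claimed equality.
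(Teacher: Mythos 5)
Your proof is correct and follows essentially the same route as the paper: for the ``$\le$'' direction push a finite fractional tiling into a rectangle-step graphon tiling, and for the ``$\ge$'' direction average a graphon tiling over the rectangles $\Omega_{u_1}\times\cdots\times\Omega_{u_k}$ and rescale by $n$. Obtaining the step function in the first direction as the Radon--Nikodym derivative of the measure $\mu$ from~(\ref{eq:tilingmeasure}) is a tidy way to arrive at the correct constant $n^{k-1}\mathfrak{t}_G(\cdot)$ on each rectangle (the mass $\mathfrak{t}_G(\cdot)/n$ divided by the rectangle's $\nu^k$-measure $n^{-k}$), and your bookkeeping of the vertex constraints and the sizes in both directions checks out.
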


\begin{proof}
Let us assume that the vertex set of $G$ is $V(G)=[n]$. Let us consider
the partition $\Omega=\Omega_{1}\sqcup\ldots\sqcup\Omega_{n}$ of
the space that hosts $W_{G}$ into sets representing the individual
vertices of~$G$.

Suppose that $\mathfrak{t}_{G}:\mathcal{F}_{F}(G)\rightarrow[0,1]$
is an arbitrary fractional $F$-tiling in $G$. Define $\mathfrak{t}:\Omega^{k}\rightarrow[0,+\infty)$
to be constant $\mathfrak{t}_{G}(i_{1},i_{2},\cdots,i_{k})/n$ on
each set $\Omega_{i_{1}}\times\Omega_{i_{2}}\times\cdots\cdots\times\Omega_{i_{k}}$.
It is straightforward to check that $\mathfrak{t}$ is an $F$-tiling
in $W$ of size $\|\mathfrak{t}\|/n$. We conclude that $\FTIL(F,G)\le n\cdot\TIL(F,W_{G})$.

Suppose that $\mathfrak{t}:\Omega^{k}\rightarrow[0,+\infty)$ is an
arbitrary $F$-tiling in $W$. Define $\mathfrak{t}_{G}:\mathcal{F}_{F}(G)\rightarrow[0,1]$
to be constant $n\cdot\int_{(x_{1},\cdots,x_{k})\in\Omega_{i_{1}}\times\Omega_{i_{2}}\times\cdots\times\Omega_{i_{k}}}\mathfrak{t}(x_{1},\cdots,x_{k})\diffr\nu^{k}$
for each $(i_{1},i_{2},\ldots,i_{k})\in\mathcal{F}_{F}(G)$. It is
straightforward to check that $\mathfrak{t}_{G}$ is an $F$-tiling
in $W$ of size $n\|\mathfrak{t}\|$. We conclude that $\FTIL(F,G)\ge n\cdot\TIL(F,W_{G})$.
\end{proof}
\begin{prop}
\label{prop:examplenotattained}Consider the graphon $W:[0,1]^{2}\rightarrow[0,1]$
defined as 
\begin{equation}
W(x,y)=\begin{cases}
0 & \mbox{if }x+y>\nicefrac{1}{2}\\
1 & \mbox{if }x+y\le\nicefrac{1}{2}
\end{cases}\;.\label{eq:HalfGraphon}
\end{equation}
Then $\TIL(K_{2},W)=\nicefrac{1}{2}$, but there exists no $K_{2}$-tiling
of size $\nicefrac{1}{2}$.
\end{prop}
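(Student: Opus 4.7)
The plan has three parts: the upper bound $\TIL(K_{2},W) \le \nicefrac{1}{2}$, a matching lower bound by an explicit sequence, and the non-attainment statement.

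\textbf{Upper bound.} This is the $k=2$ instance of the general inequality $\TIL(F,W) \le \nicefrac{1}{k}$ noted after Definition~\ref{def:graphontiling}: integrating the degree constraint~(\ref{eq:IntegralAtMostOne}) over $x \in \Omega$ and applying Fubini, the left-hand side collapses to $k\|\mathfrak{t}\|$ while the right-hand side equals $\nu(\Omega) = 1$.

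\textbf{Lower bound.} I will exhibit a sequence $(\mathfrak{t}_{n})_{n}$ of $K_{2}$-tilings in $W$ with $\|\mathfrak{t}_{n}\| \to \nicefrac{1}{2}$. The heuristic is that the ``ideal'' zero-width tiling is the measure-preserving involution $x \mapsto \nicefrac{1}{2}-x$, whose graph is the anti-diagonal line $\{x+y=\nicefrac{1}{2}\}$ sitting on the boundary of $\SUPPORT W$. We approximate it by thickening: set $\mathfrak{t}_{n} = c_{n}\,\mathbf{1}_{A_{n}}$, where $A_{n}$ is a thin strip of width $\nicefrac{1}{n}$ inside $\SUPPORT W$ just below the anti-diagonal, and choose $c_{n}$ of order $n$ so that~(\ref{eq:IntegralAtMostOne}) is essentially tight away from the corners. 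A routine integration then yields $\|\mathfrak{t}_{n}\| = \nicefrac{1}{2} - O(\nicefrac{1}{n})$.

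\textbf{Non-attainment.} Suppose toward contradiction that some $\mathfrak{t}$ realizes the supremum. Passing to the symmetrization $\tilde{\mathfrak{t}}(x,y) := \tfrac{1}{2}(\mathfrak{t}(x,y) + \mathfrak{t}(y,x))$, which preserves size, support and the constraint~(\ref{eq:IntegralAtMostOne}), we may assume $\mathfrak{t}$ is symmetric. Equality in the upper-bound derivation then forces $2\int \mathfrak{t}(x,y)\,\mathrm{d}\nu(y) = 1$ for $\nu$-a.e.\ $x$. Multiplying this pointwise identity by the coordinate $x$, integrating over $\Omega$, and applying the symmetry $\iint x\,\mathfrak{t} = \iint y\,\mathfrak{t}$, yields a moment identity which, combined with the pointwise bound $x+y \le \nicefrac{1}{2}$ on $\SUPPORT \mathfrak{t}$, forces equality $x + y = \nicefrac{1}{2}$ almost everywhere on $\SUPPORT \mathfrak{t}$. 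Since $\{x + y = \nicefrac{1}{2}\}$ is $\nu^{2}$-null and $\mathfrak{t}$ is a function (i.e.\ $\mathfrak{t}\,\mathrm{d}\nu^{2}$ is absolutely continuous with respect to $\nu^{2}$), this yields $\mathfrak{t} \equiv 0$ almost everywhere, contradicting $\|\mathfrak{t}\| > 0$. The main obstacle is executing the moment argument cleanly: one must verify that the symmetrization respects all three tiling conditions and that no slack enters through the endpoint behavior near $x=0$ and $x = \nicefrac{1}{2}$, so that the chain of Jensen-type equalities is exact.
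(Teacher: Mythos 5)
Your outline is structurally sound, but it inherits two constant discrepancies worth flagging: as the label \texttt{HalfGraphon}, the accompanying figure, and the paper's own use of the support condition in its non-attainment step all indicate, the anti\-diagonal should be $\{x+y=1\}$ (not $\{x+y=\nicefrac{1}{2}\}$), so your ``ideal'' involution is $x\mapsto 1-x$; and in the lower bound the constant $c_n$ must be of order $n/2$, not $n$, since a \emph{symmetric} $\mathfrak{t}_n$ contributes to both $\ell=1,2$ terms in~(\ref{eq:IntegralAtMostOne}). With those corrections your upper and lower bounds coincide with the paper's: the former is the general $\TIL(F,W)\le 1/v(F)$ observation, the latter the same thin-strip construction as $\mathfrak{t}_C$. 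Where you genuinely diverge is the non-attainment step. The paper argues locally, showing $\mathfrak{t}=0$ a.e.\ on each rectangle $[0,\alpha]\times[0,1-\alpha]$ by comparing the integral over a corner triangle against the marginal identity $\int_y\mathfrak{t}(x_0,y)=\nicefrac{1}{2}$ a.e., then letting $\alpha$ vary. Your moment argument is global and shorter: symmetry and the marginal identity give $\iint(x+y)\,\mathfrak{t}\,\mathrm{d}\nu^2 = 2\int_0^1 \nicefrac{x}{2}\,\mathrm{d}x = \nicefrac{1}{2}$, while the support condition $x+y\le 1$ gives $\iint(x+y)\,\mathfrak{t}\le\|\mathfrak{t}\|=\nicefrac{1}{2}$, with equality forcing $\mathfrak{t}$ to be carried by the $\nu^2$-null line $\{x+y=1\}$, hence $\mathfrak{t}=0$ a.e.\ by absolute continuity \textemdash{} contradiction. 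This is correct and arguably cleaner; note there is no Jensen step here, only a single linear estimate, and no endpoint subtlety arises. The trade-off is that the paper's rectangle-by-rectangle argument yields localized information about where a near-optimal tiling must concentrate, which is more amenable to stability refinements, whereas yours reaches the bare contradiction faster.
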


\begin{proof}
For an arbitrary $C\ge1$ we can take a function $\mathfrak{t}_{C}:[0,1]^{2}\rightarrow[0,+\infty)$
defined by 
\[
\mathfrak{t}_{C}(x,y)=\begin{cases}
0 & \mbox{if }x+y>\nicefrac{1}{2}\mbox{ or }x+y<\nicefrac{1}{2}-\nicefrac{1}{C}\\
C & \mbox{if }x+y\in\left[\nicefrac{1}{2}-\nicefrac{1}{C},\nicefrac{1}{2}\right]
\end{cases}\;.
\]
It is easy to see that $\mathfrak{t}_{C}$ is a $K_{2}$-tiling and
that $\mathfrak{\|t}_{C}\|\ge\nicefrac{1}{2}-\nicefrac{1}{C}$. Thus
$\TIL(F,W)\ge\nicefrac{1}{2}$. Yet, there is no $K_{2}$-tiling of
size $\nicefrac{1}{2}$. To see this, assume for a contradiction that
$\mathfrak{t}$ is such a $K_{2}$-tiling. By replacing $\mathfrak{t}(x,y)$
by $\frac{1}{2}\left(\mathfrak{t}(x,y)+\mathfrak{t}(y,x)\right)$,
we can assume that $\mathfrak{t}(x,y)$ is symmetric. To get the contradiction,
it is enough to show that $\mathfrak{t}$ is constant zero almost
everywhere on each rectangle $[0,\alpha]\times[0,1-\alpha]$ (see
Figure~\ref{fig:EmptyRectangle}). Firstly, observe that because
$t$ is symmetric, (\ref{eq:IntegralAtMostOne}) tells us that $\int_{y\in[0,1]}\mathfrak{t}(x_{0},y)\diffr y\le\nicefrac{1}{2}$
for every $x_{0}\in[0,1]$. Since $\nicefrac{1}{2}=\|\mathfrak{t}\|=\int_{x_{0}\in[0,1]}\left(\int_{y\in[0,1]}\mathfrak{t}(x_{0},y)\diffr y\right)\diffr x_{0}$,
we conclude that $\int_{y\in[0,1]}\mathfrak{t}(x_{0},y)\diffr y=\nicefrac{1}{2}$
for almost every $x_{0}\in[0,1]$. Applying this for $x_{0}\ge1-\alpha$,
we get that the integral $\int_{x\in[1-\alpha,1]}\left(\int_{y\in\left[0,x\right]}\mathfrak{t}(x,y)\diffr y\right)\diffr x$
(over the green triangle in Figure~\ref{fig:EmptyRectangle}) is
exactly $\nicefrac{\alpha}{2}$. The value $\int_{x\in[0,1]}\left(\int_{y\in[0,\alpha]}\mathfrak{t}(x,y)\diffr y\right)\diffr x$
is at most $\nicefrac{\alpha}{2}$ by~(\ref{eq:IntegralAtMostOne}).
We conclude that $\mathfrak{t}$ is zero almost everywhere on $[0,\alpha]\times[0,1-\alpha]$.
\begin{figure}
\includegraphics[scale=0.8]{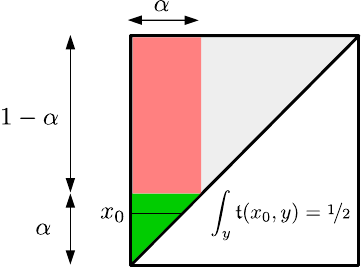}\caption{For the graphon defined by (\ref{eq:HalfGraphon}), any symmetric
$K_{2}$-tiling $\mathfrak{t}$ of weight $\|\mathfrak{t}\|=\nicefrac{1}{2}$
satisfies that $\mathfrak{t}$ is zero almost everywhere on the red
rectangle $[0,\alpha]\times[0,1-\alpha]$. }
\label{fig:EmptyRectangle}
\end{figure}
\end{proof}

\subsection{Graphon tilings versus fractional graphon tilings\label{subsec:TilingsVSFractionalTilings}}

Let us now explain that there should be no difference between $F$-tilings
and fractional $F$-tilings for graphons. In the world of finite graphs,
the former is a proper subset of the latter. So, let us show that
in the graphon world we have the opposite inclusion as well. That
is, we want to show that a fractional $F$-tiling $\mathfrak{t}$
of certain size in $W$ yields an $F$-tiling of approximately the
same size (after rescaling) in finite graphs $G_{n}$ of a sequence
that converges to $W$. To this end, fix a sequence $\left(R_{n}\right)_{n}$
of finer and finer Szemerédi regularizations of the graphs $G_{n}$.
That is, $R_{n}$ is a cluster graph whose edges carry weights in
the interval $[0,1]$ and which comes from regularizing $G_{n}$ with
an error $\epsilon_{n}$, with $\lim_{n}\epsilon_{n}=0$. Since the
sequence $\left(R_{n}\right)_{n}$ converges to $W$ in the cut-distance
we can take $\mathfrak{t}$ and turn into a fractional $F$-tiling
$\mathfrak{t}_{n}$ in $R_{n}$ of size $\approx\|\mathfrak{t}\|v(R_{n})$
(for $n$ large). Of course, the fact that we can transfer a fractional
$F$-tiling on a graphon to a fractional $F$-tiling on a graph in
a sequence converging to this graphon needs a formal statement, which
we give in Theorem~\ref{thm:lowersemicontgraphs}. In the last step,
we recall that Lemma~\ref{lem:weakblowup} provides a standard tool
for finding an $F$-tiling of size~$\approx\|\mathfrak{t}\|n$ in~$G_{n}$
based on an fractional $F$-tiling of size $\approx\|\mathfrak{t}\|v(R_{n})$
in the cluster graph $R_{n}$. A description by picture how this step
is done is given in Figure~\ref{fig:blowup}.
\begin{figure}
\includegraphics[width=0.9\textwidth]{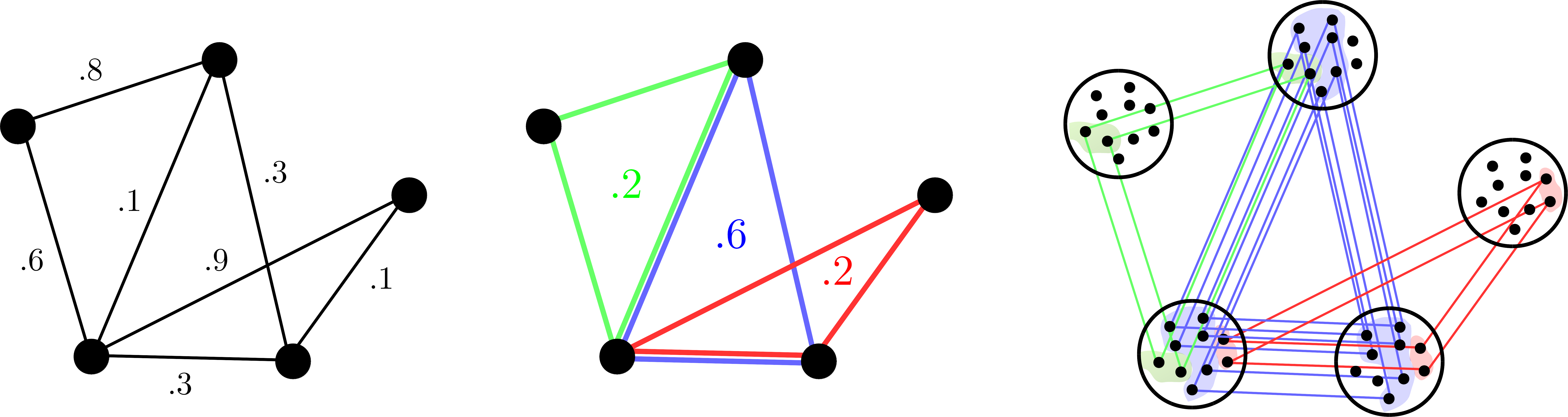}\caption{The left-hand picture shows the graph $R_{n}$ with the edge-weights.
The middle picture shows a fractional triangle-tiling $\mathfrak{t}$
in $R_{n}$ (the different copies of a triangle are depicted with
different colors). On the right-hand picture, we split the clusters
of $G_{n}$ according to $\mathfrak{t}$, thus obtaining subpairs
of the original regular pairs. Using Lemma~\ref{lem:weakblowup},
we get an $F$-tiling in $G_{n}$ of size proportional to the size
of the fractional $F$-tiling in $R_{n}$. (This example is oversimplified:
If the triangle-tiling is ``spread out'', meaning that $\mathfrak{t}(H)$
is of order $\epsilon_{n}$ or less for many copies $H$ of the triangle,
then the regularity of the pairs is not inherited to the subpairs
and Lemma~\ref{lem:weakblowup} cannot be applied. As we shall see,
this issue can be resolved.)}
\label{fig:blowup}
\end{figure}

Note that the above also explains our remark in Section~\ref{subsec:IntroducingTilings}
that the notion of tilings depends only on the support of the graphon,
and not on its values themselves. Indeed, Lemma~\ref{lem:weakblowup}
was applied to subpairs of regular pairs. And Lemma~\ref{lem:weakblowup}
works for regular pairs of an arbitrary positive density.\footnote{Lower density will require a finer error parameter of regularity.
But in the limit case, we have ``infinitely fine regular pairs''.}

\subsection{Tilings and convergence}

Suppose $F$ is a finite graph and that we have a sequence $(G_{n})$
of graphs, $v(G_{n})=n$, that converges to a graphon $W:\Omega^{2}\rightarrow[0,1]$
in the cut-distance. Observe that the numbers $\TIL(F,G_{n})$ grow
(up to) linearly in $n$. Hence, we could hope that $\frac{\TIL(F,G_{n})}{n}\to\TIL(F,W)$
holds. Unfortunately, this is not always true. Indeed, consider $F=K_{2}$,
and the graphs~$G_{n}$ are defined as a perfect matching~on $n$
vertices for~$n$ even, and as an edgeless $n$-vertex graph for
$n$ odd. Then $\left(G_{n}\right)_{n}$ converges to the zero graphon
but $\frac{\TIL(F,G_{n})}{n}$ oscillates between $0$ and $\frac{1}{2}$.
So, while we see that the quantity $\TIL(F,\cdot)$ cannot be continuous
on the space of graphons, here we establish lower semicontinuity for
graphs converging to graphon.
\begin{thm}
\label{thm:lowersemicontgraphs}Suppose that $F$ is a finite graph
and let $(G_{n})$ be a sequence of graphs of growing orders converging
to a graphon $W:\Omega^{2}\rightarrow[0,1]$ in the cut-distance.
Then we have that $\liminf_{n}\frac{\TIL(F,G_{n})}{v(G_{n})}\ge\TIL(F,W)$.
\end{thm}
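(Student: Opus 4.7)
My plan is to carry out the three-step reduction outlined in Section~\ref{subsec:TilingsVSFractionalTilings}: regularize both the target graphon tiling and the graphs $G_n$, transfer the tiling to a cluster graph, and blow up to $G_n$. Fix $\gamma>0$ and an $F$-tiling $\mathfrak{t}$ in $W$ with $\|\mathfrak{t}\|\ge\TIL(F,W)-\gamma$; the aim is to produce, for every sufficiently large $n$, an integer $F$-tiling of $G_n$ of size at least $(\|\mathfrak{t}\|-O(\gamma))\,v(G_n)$, after which letting $\gamma\to 0$ gives the theorem. As the first step I would smooth $\mathfrak{t}$: using $\SUPPORT\mathfrak{t}\subseteq\SUPPORT W^{\otimes F}$, truncate $\mathfrak{t}$ from above and from below on the support to obtain a bounded step-function tiling $\hat{\mathfrak{t}}$ with $\|\hat{\mathfrak{t}}\|\ge\|\mathfrak{t}\|-\gamma$, supported on $\{\mathbf{x}:W(x_i,x_j)\ge d\text{ for every }ij\in E(F)\}$ for some small $d>0$, and constant on the cells of a finite partition $\mathcal{P}=\{P_1,\dots,P_m\}$ of $\Omega$.

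The second step regularizes $G_n$. Pick parameters $d'\ll d$ and $\epsilon'>0$ to be tuned in terms of $\gamma, d, \mathcal{P}$ and $F$. Lemma~\ref{lem:RegularityLemma} produces an $(\epsilon',d')$-regularization $H_n$ of $G_n$ with cluster graph $R_n$ on some $\ell\le L$ clusters; combining (\ref{eq:distGraphClustergraph}) with $\DIST_\square(W_{G_n},W)\to 0$ gives $\DIST_\square(W_{R_n},W)=O(d'+\epsilon')$ for all large $n$. Writing $\Omega_1^n,\dots,\Omega_\ell^n$ for the parts of $\Omega$ corresponding to the clusters of $R_n$ (refined with $\mathcal{P}$ if needed, at the cost of multiplying $\ell$ by at most $m$), I would define
\[
\mathfrak{t}_n(i_1,\dots,i_k):=\ell\int_{\Omega_{i_1}^n\times\cdots\times\Omega_{i_k}^n}\hat{\mathfrak{t}},\qquad (i_1,\dots,i_k)\in\mathcal{F}_F(R_n).
\]
Constraint (\ref{eq:IntegralAtMostOne}) for $\hat{\mathfrak{t}}$ immediately implies the vertex-sum constraint for $\mathfrak{t}_n$, and Lemma~\ref{lem:cutdistVSsubgraphCount} applied on each of the at most $\ell^k$ cluster rectangles (using the pointwise lower bound $W^{\otimes F}\ge d^{|E(F)|}$ on $\SUPPORT\hat{\mathfrak{t}}$) shows that the mass lost by restricting to $\mathcal{F}_F(R_n)$ is $O(\gamma)$ once $d'+\epsilon'$ is chosen small enough. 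Hence $\|\mathfrak{t}_n\|\ge(\|\mathfrak{t}\|-O(\gamma))\,v(R_n)$ for large $n$. The third step blows $\mathfrak{t}_n$ up to $G_n$: for each tile $T=(i_1,\dots,i_k)$ with $\mathfrak{t}_n(T)>0$, carve out subclusters $U_j^T\subseteq V_{i_j}^n$ of size $\lfloor\mathfrak{t}_n(T)\,|V_{i_j}^n|\rfloor$ (feasible by the vertex constraint on $\mathfrak{t}_n$), and apply Lemma~\ref{lem:weakblowup} inside each $(U_1^T,\dots,U_k^T)$ to extract an integer $F$-tiling of size $(1-\gamma)|U_1^T|$.

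The hard part will be in this last step: Lemma~\ref{lem:weakblowup} needs each subpair $(U_j^T,U_{j'}^T)$ with $jj'\in E(F)$ to be $\epsilon$-regular of density at least $d$, but whenever $\mathfrak{t}_n(T)\ll\epsilon'$ the Slicing Lemma fails to inherit regularity from the host pair $(V_{i_j}^n,V_{i_{j'}}^n)$ to the tiny subpair, as flagged in Figure~\ref{fig:blowup}. My plan to resolve this is to select the subclusters $U_j^T$ \emph{at random} inside each cluster---for instance by independently assigning each $v\in V_i^n$ to an incident tile-slot with probabilities proportional to the corresponding $\mathfrak{t}_n$-values---and invoke Lemma~\ref{lem:randomsubpair}: with the regularization parameters tightened accordingly, the probability that any one subpair fails to be $\epsilon$-regular of density $\ge d-\epsilon$ is exponentially small in its size, so a union bound over the $O(\ell^k)$ tiles leaves positive success probability for all large $n$. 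Summing the tilings that Lemma~\ref{lem:weakblowup} extracts across tiles yields $\TIL(F,G_n)\ge(1-\gamma)\,\|\mathfrak{t}_n\|\cdot n/v(R_n)\ge(1-\gamma)(\|\mathfrak{t}\|-O(\gamma))\,n$, and letting $\gamma\to 0$ closes the argument.
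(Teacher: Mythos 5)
Your outline diverges from the paper's proof in the middle step, and it is precisely there that it has a genuine gap. The paper never transfers the graphon tiling~$\mathfrak{t}$ directly to the cluster graph. Instead it goes through LP duality: it uses Theorem~\ref{thm:LPdualityGraphons} to replace $\TIL(F,W)$ by $\FCOV(F,W)$, invokes the lower semicontinuity of $\FCOV$ (Corollary~\ref{cor:liminfCovers}, ultimately a weak$^{*}$ compactness argument) to get $\FCOV(F,W_{R_n})\ge\FCOV(F,W)-\nicefrac{\epsilon}{3}$, and only then applies \emph{finite} LP duality on $R_n$ to produce a fractional tiling $\mathfrak{t}_{R_n}$ of the required size. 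The blow-up phase (truncating small weights, random slicing via Lemma~\ref{lem:randomsubpair}, and Lemma~\ref{lem:weakblowup}) you describe correctly and it matches the paper.

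The gap is in your claim that ``Lemma~\ref{lem:cutdistVSsubgraphCount} applied on each of the at most $\ell^{k}$ cluster rectangles \dots shows that the mass lost by restricting to $\mathcal{F}_F(R_n)$ is $O(\gamma)$.'' Lemma~\ref{lem:cutdistVSsubgraphCount} gives an \emph{additive} error of order $\|W-W_{R_n}\|_\square$ per rectangle, independent of the rectangle's measure, so summing over the $\ell^{k}$ cluster rectangles produces an error of order $\ell^{k}\,\|W-W_{R_n}\|_\square$. But $\ell$ is not chosen before $\epsilon',d'$ are fixed; it is an output of the Regularity Lemma and grows (tower-type) in $\nicefrac{1}{\epsilon'}$, while $\|W-W_{R_n}\|_\square=O(\epsilon'+d')$. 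So the product need not be small, and the chicken-and-egg cannot be broken this way. The obstruction is real, not just an accounting problem: cut-norm closeness of $W_{R_n}$ to $W$ does not force $\SUPPORT W_{R_n}$ to essentially contain $\{W\ge d\}$. For example, take $W\equiv d$ and $W_{R_n}$ a fine chessboard taking values $2d$ and $0$ on alternate cells; then $\|W_{R_n}-W\|_\square$ is tiny while $\{W\ge d\}\cap\{W_{R_n}=0\}$ has measure $\nicefrac{1}{2}$ for every $n$, so restricting $\hat{\mathfrak{t}}$ to $\mathcal{F}_F(R_n)$-rectangles would cost a constant fraction of its mass. The correct conclusion $\liminf_n\TIL(F,G_n)/v(G_n)\ge\TIL(F,W)$ still holds in such examples, but not because the tiling of $W$ transfers verbatim; one must allow the tiling of $G_n$ to sit on a different support. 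This is exactly what the detour through covers and LP duality buys: lower semicontinuity of $\FCOV$ is a global statement, insensitive to where the tiling lives, and the finite LP duality on $R_n$ then produces a fractional tiling of $R_n$ adapted to $R_n$'s own support rather than to $\hat{\mathfrak{t}}$'s.
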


The proof of Theorem~\ref{thm:lowersemicontgraphs} is given in Section~\ref{subsec:ProofTheoremLowesemicontinuousGraphs}.
\begin{rem}
\label{rem:lowersemicontinuitybetter}Lower semicontinuity is the
more applicable half of continuity for the purposes of extremal graph
theory. Indeed, in extremal graph theory one typically wants to bound
the $F$-tiling number of graphs from a graph class of interest from
below (as opposed to bounding from above). Thanks to Theorem~\ref{thm:lowersemicontgraphs}
this can be achieved (in the asymptotic sense) by finding a lower
bound $\TIL(F,W)$ for each limiting graphon $W$. This proof scheme
is used in~\cite{HlHuPi:Komlos}.
\end{rem}

\begin{figure}
\includegraphics{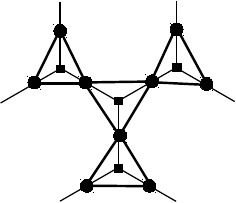}\caption{The transformation described in Remark~\ref{rem:compareBenjaminiSchramm}.
A part of the random (bipartite) graph $H_{n}$ shown with thin lines
and box vertices. The corresponding line graph~$G_{n}$ shown with
thick lines and circle vertices.}
\label{fig:transformation}
\end{figure}

\begin{rem}
\label{rem:compareBenjaminiSchramm}Let us compare the situation with
the sparse case. That is, suppose that we have a Benjamini\textendash Schramm
convergent sequence $(G_{n})_{n}$ of bounded degree graphs, and we
are concerned with the sequence $\left(\nicefrac{\TIL(F,G_{n})}{v(G_{n})}\right)_{n}$
for some fixed graph $F$. A theorem of Nguyen and Onak~\cite{NO:ConstantTime}
(reproved later by Elek and Lippner~\cite{EL:BorelOracles} and by
Bordanave, Lelarge, and Salez~\cite{BLS:MatchingOnInfiniteGraphs})
tells us that indeed the normalized matching ratios (i.e., the most
important case $F=K_{2}$) converge. In contrast, Endre Csóka has
communicated to us that the sequence $\left(\nicefrac{\TIL(K_{3},G_{n})}{v(G_{n})}\right)_{n}$
needs not to converge. The construction he uses to this end is as
follows. For $n$ even, consider a random cubic graph $H_{n}$ of
order $2n$, for $n$ odd consider a random bipartite cubic graph
$H_{n}$ of order $2n$. It is well-known that the resulting sequence
is Benjamini\textendash Schramm convergent almost surely, and that
the limit is the rooted infinite 3-regular tree. Let $G_{n}$ be the
line graphs of $H_{n}$. This is shown in Figure~\ref{fig:transformation}.
There is a one-to-one correspondence between independent sets in $H_{n}$'s
and triangle tilings in $G_{n}$'s. Thus, when $H_{n}$ is bipartite
($n$ even), we have $\TIL(K_{3},G_{n})=v(G_{n})/3$. On the other
hand, a result of Bollobás about independence number of random cubic
graphs~\cite{Bol:IndependentSets} translates as $\TIL(K_{3},G_{n})\le0.92\cdot v(G_{n})/3$
asymptotically almost surely for $n$ odd. 

A similar construction shows that the parameter $\nicefrac{\TIL(F,G)}{v(G)}$
is discontinuous in the Benjamini\textendash Schramm topology for
each 2-connected graph $F$. In that construction, random cubic graphs
and random bipartite cubic graphs are replaced by random $v(F)$-regular
graphs and random bipartite $v(F)$-regular graphs, respectively.
It would be interesting to fully characterize the graphs $F$ for
which the quantity $\nicefrac{\TIL(F,\cdot)}{v(\cdot)}$ is continuous.
The case when $F$ is a path or a star seems particularly interesting.
\end{rem}

In Theorem~\ref{thm:lowersemicont} below we state a version of Theorem~\ref{thm:lowersemicontgraphs}
for a sequence $(W_{n})$ of graphons converging to a graphon $W$.
Then we have $\liminf_{n}\TIL(F,W_{n})\ge\TIL(F,W)$. Note that this
is not a strengthening of Theorem~\ref{thm:lowersemicontgraphs}
since in general we do not have (even approximately) that $\frac{\TIL(F,G)}{v(G)}=\TIL(F,U_{G})$
for a graphon representation $U_{G}$ of the graph $G$.
\begin{thm}
\label{thm:lowersemicont}Suppose that $(W_{n})$ is a sequence of
graphons $W_{n}:\Omega^{2}\rightarrow[0,1]$ converging to a graphon
$W:\Omega^{2}\rightarrow[0,1]$ in the cut-distance and let $F$ be
an arbitrary graph. Then we have that $\liminf_{n}\TIL(F,W_{n})\ge\TIL(F,W)$.
\end{thm}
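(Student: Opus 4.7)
The plan is to mirror the strategy sketched in Section~\ref{subsec:TilingsVSFractionalTilings} for Theorem~\ref{thm:lowersemicontgraphs}, but now entirely within the graphon world, with Lemma~\ref{lem:subgraphonscoverge} taking the role of passing to a regularization.

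Fix $\epsilon>0$ and a tiling $\mathfrak{t}$ in $W$ with $\|\mathfrak{t}\|\ge\TIL(F,W)-\epsilon$. The first step is to truncate $\mathfrak{t}$ so that it is supported on a region where $W$ is uniformly positive. For $d>0$ set
\[
\mathfrak{t}_{d}(x_{1},\ldots,x_{k}) := \mathfrak{t}(x_{1},\ldots,x_{k})\cdot\mathbf{1}\bigl[W(x_{s},x_{t})\ge d\text{ for every }st\in E(F)\bigr].
\]
Because $\SUPPORT\mathfrak{t}\subset\mathcal{F}_{F}(W)$, the set where the indicator drops to $0$ while $\mathfrak{t}>0$ shrinks to a null set as $d\to 0$, so dominated convergence (applied with dominant $\mathfrak{t}\in\mathcal{L}^{1}(\Omega^{k})$) yields $\|\mathfrak{t}_{d}\|\to\|\mathfrak{t}\|$, and I fix $d$ small enough that $\|\mathfrak{t}_{d}\|\ge\|\mathfrak{t}\|-\epsilon$. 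A further truncation $\mathfrak{t}_{d}\leftarrow\min(\mathfrak{t}_{d},C)$ for large $C$ preserves~\eqref{eq:tilingsupport} and~\eqref{eq:IntegralAtMostOne}, leaves the support unchanged, and costs at most $\epsilon$ in mass. Finally I introduce the subgraphon $U:=W\cdot\mathbf{1}[W\ge d]\le W$, which takes values in $\{0\}\cup[d,1]$; by construction $\mathfrak{t}_{d}$ is a tiling in $U$.

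Lemma~\ref{lem:subgraphonscoverge} produces $U_{n}\le W_{n}$ with $U_{n}\to U$ in the cut-norm. Every tiling in $U_{n}$ is automatically a tiling in $W_{n}$, so the problem reduces to building, for each large $n$, a tiling in $U_{n}$ of mass at least $\|\mathfrak{t}_{d}\|-O(\epsilon)$. To do so I would partition $\Omega=\Omega_{1}\sqcup\cdots\sqcup\Omega_{m}$ into sets of measure $1/m$ with $m$ large, and take the block-wise averaging $\mathfrak{t}_{d}^{*}$ of $\mathfrak{t}_{d}$; a routine Fubini computation shows that~\eqref{eq:IntegralAtMostOne} is preserved under this averaging, while $\|\mathfrak{t}_{d}^{*}\|=\|\mathfrak{t}_{d}\|$ and $\mathfrak{t}_{d}^{*}\to\mathfrak{t}_{d}$ in $\mathcal{L}^{1}(\Omega^{k})$. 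On every active block $\Omega_{i_{1}}\times\cdots\times\Omega_{i_{k}}$ the combination of $\SUPPORT\mathfrak{t}_{d}\subset\{W\ge d\}$ with the uniform upper bound $\mathfrak{t}_{d}\le C$ gives, via a projection argument, a quantitative lower bound on $\int_{\Omega_{i_{s}}\times\Omega_{i_{t}}}U$ for each $st\in E(F)$; the defining property of cut-norm convergence (and Lemma~\ref{lem:cutdistVSsubgraphCount} for the analogous $F$-density estimate) transfers this to the same lower bound on $\int_{\Omega_{i_{s}}\times\Omega_{i_{t}}}U_{n}$ for all sufficiently large $n$.

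The main obstacle is converting these block-average lower bounds into a genuine $F$-tiling in $U_{n}$ of mass close to $\|\mathfrak{t}_{d}^{*}\|$ and with support inside $\mathcal{F}_{F}(U_{n})$. Since $U_{n}$ is an arbitrary graphon rather than a step function, Lemma~\ref{lem:weakblowup} is not directly applicable and the set $\{U_{n}>0\}$ inside an active pair need not cover most of the pair. My plan is to refine the partition inside each active pair and apply a weak regularity approximation to $U_{n}$ there, then invoke Lemma~\ref{lem:randomsubpair} to locate many $k$-tuples of sub-parts that simultaneously induce $\epsilon$-regular pairs of density close to $d$ on every edge of $F$; a partite blow-up in the spirit of Lemma~\ref{lem:weakblowup} then embeds into each active block a step-function tiling whose mass matches that of $\mathfrak{t}_{d}^{*}$ up to a small error, and Lemma~\ref{lem:removalpartiteNullify} controls the final small modification needed to enforce support inside $\mathcal{F}_{F}(U_{n})$. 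Diagonalising the parameters $d$, $C^{-1}$, $m^{-1}$, $n^{-1}\to 0$ yields the claimed $\liminf$ inequality.
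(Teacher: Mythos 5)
Your plan takes a genuinely different route from the paper's, and it is not complete. The paper proves Theorem~\ref{thm:lowersemicont} in two lines: by Theorem~\ref{thm:LPdualityGraphons} it suffices to show $\liminf_{n}\FCOV(F,W_{n})\ge\FCOV(F,W)$, and this is Corollary~\ref{cor:liminfCovers}, which in turn comes from Theorem~\ref{thm:limitOfCovers} (a weak$^{*}$ accumulation point of fractional $F$-covers of $W_{n}$ is a fractional $F$-cover of $W$) together with the sequential Banach--Alaoglu theorem. So the paper works entirely on the \emph{cover} side, where weak$^{*}$ compactness does all the work and no blow-up machinery is needed.

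Your truncation and reduction steps are sound: passing to $\mathfrak{t}_{d}$ supported on $\{W\ge d\text{ along all edges of }F\}$, capping at $C$, defining $U=W\cdot\mathbf{1}[W\ge d]$, and invoking Lemma~\ref{lem:subgraphonscoverge} to obtain $U_{n}\le W_{n}$ with $U_{n}\to U$ are all correct and legitimately reduce the problem. The block-averaging step also preserves~\eqref{eq:IntegralAtMostOne} and the mass, and the projection argument giving a lower bound on $\int_{\Omega_{i_{s}}\times\Omega_{i_{t}}}U$ over active blocks does work.

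The gap is in the step you yourself flag as the main obstacle. Block-density lower bounds on $U_{n}$, or even $F$-density lower bounds via Lemma~\ref{lem:cutdistVSsubgraphCount}, do not by themselves yield a graphon $F$-tiling in $U_{n}$ of mass close to $\|\mathfrak{t}_{d}^{*}\|$: having many homomorphic copies of $F$ in a block is very far from having a large $F$-tiling (a star has many edges but only a tiny matching), and cut-norm closeness to $U$ does not constrain \emph{where inside a block} the mass of $U_{n}$ sits. Your proposed fix---weak regularity on each active pair, Lemma~\ref{lem:randomsubpair}, then a partite blow-up---is finite-graph machinery: Lemmas~\ref{lem:weakblowup} and~\ref{lem:randomsubpair} produce vertex-disjoint copies of $F$ in a \emph{graph}, whereas here you need to output a measurable function $\mathfrak{s}:\Omega^{k}\to[0,\infty)$ supported inside $\mathcal{F}_{F}(U_{n})$ and satisfying~\eqref{eq:IntegralAtMostOne}; there is no lemma in the paper that performs this graphon-level blow-up, and it is not clear how to carry it out without effectively re-deriving either the graphon LP duality or a graphon version of the blow-up lemma. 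In short: the approach is not obviously wrong, but the crucial construction is not carried out, and the tools you cite do not apply in the form they are stated. The paper's duality-plus-weak$^{*}$-compactness argument avoids this entirely; it is both shorter and more robust, and this is precisely where the effort of proving Theorem~\ref{thm:limitOfCovers} and Theorem~\ref{thm:LPdualityGraphons} pays off.
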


The proof of Theorem~\ref{thm:lowersemicont} is given in Section~\ref{subsec:ProofTheoremLowesemicontinuous}.

\subsection{Robust tiling number}

While \textemdash{} as we have explained \textemdash{} the function
$\TIL(F,\cdot)$ is not upper-semicontinuous, it is ``upper-semicontinuous
in the cut-distance after small $\mathcal{L}^{1}$-perturbations''.
This is stated below.
\begin{prop}
\label{prop:uppercont}Suppose that $F$ is an arbitrary graph and
that $W:\Omega^{2}\rightarrow[0,1]$ is a graphon. Then for an arbitrary
$\eta>0$ there exists a number $\delta>0$ such that each graphon
$U$ with $\left\Vert W-U\right\Vert _{\square}<\delta$ can be decreased
in the $\mathcal{L}^{1}(\Omega^{2})$-distance by at most $\eta$
(in a suitable way) so that we obtain a graphon $U^{*}$ for which
$\TIL(F,U^{*})\le\TIL(F,W)+\eta$.
\end{prop}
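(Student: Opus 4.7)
\emph{Plan.} The approach is to combine a regularity decomposition of $W$ with the partite removal lemma, LP duality, and the weak blow-up lemma. Fix $\gamma=\eta/4$ and let $(\epsilon,d)$ be the parameters produced by Lemma~\ref{lem:weakblowup} for $F$ and $\gamma$. Apply (a graphon form of) Lemma~\ref{lem:RegularityLemma} to $W$ with parameters $(\epsilon,d)$, yielding an equipartition $\Omega=\Omega_1\sqcup\cdots\sqcup\Omega_\ell$, a cluster step graphon $W_R$, and a cluster graph $R^{\#}$ on $[\ell]$ whose edges are exactly the $\epsilon$-regular pairs of density at least $d$ in $W$.

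For each ordered $I=(i_1,\ldots,i_k)\in[\ell]^k$ let $W_I$ and $U_I$ be the $(\Omega_{i_1},\ldots,\Omega_{i_k})$-partite versions of $W$ and $U$. Call $I$ \emph{heavy} if $\int W_I^{\otimes F}\ge\alpha$ and \emph{light} otherwise, where the threshold $\alpha>0$ is chosen small enough (roughly $\alpha\sim d^{|E(F)|}/\ell^k$) that every heavy $I$ has all pairs $(i_u,i_v)$, $uv\in E(F)$, high-density in $W$. For $\delta$ sufficiently small, Lemma~\ref{lem:cutdistVSsubgraphCount} gives $\int U_I^{\otimes F}<2\alpha$ on every light $I$. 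Then Lemma~\ref{lem:removalpartite} lets us decrease each such light $U_I$ by $\mathcal{L}^1$-distance at most $\beta$ to make it $F$-free; folding these changes back to $\Omega^2$ via~(\ref{eq:projected}) and summing over the at most $\ell^k$ light tuples produces $U^*\le U$ with $\|U-U^*\|_1\le\ell^k\beta\le\eta/2$ upon choosing $\beta\le\eta/(2\ell^k)$. By construction, $(U^*)_I$ is $F$-free on every light $I$, so every $F$-copy of $U^*$ has its cell-shadow $(i_1,\ldots,i_k)$ lying in $\mathcal{F}_F(R^{\#})$.

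To upper-bound $\TIL(F,U^*)$, I will lift a cover. Let $\mathfrak{c}^R$ be an optimal $F$-cover of the finite graph $R^{\#}$, so $\|\mathfrak{c}^R\|=\FTIL(F,R^{\#})$ by finite LP duality, and define $\mathfrak{c}^{U^*}(x)=\mathfrak{c}^R(i)$ for $x\in\Omega_i$. The cell-shadow property shows $\mathfrak{c}^{U^*}$ covers every $F$-copy of $U^*$, and Proposition~\ref{prop:FTILgraphTILgraphon} gives $\|\mathfrak{c}^{U^*}\|=\frac{1}{\ell}\FTIL(F,R^{\#})=\TIL(F,W_R)$. Theorem~\ref{thm:LPdualityGraphons} then yields $\TIL(F,U^*)\le\TIL(F,W_R)$. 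Finally, to realize an optimal fractional tiling $\mathfrak{t}^R$ of $R^{\#}$ inside $W$, I split each $\Omega_i$ into random sub-pieces proportionally to the weights $\mathfrak{t}^R$ puts through $i$ (using Lemma~\ref{lem:randomsubpair} so that the resulting sub-pairs stay regular) and apply Lemma~\ref{lem:weakblowup} to each sub-partite structure, incurring only a multiplicative loss of $(1-\gamma)$. This gives $\TIL(F,W)\ge(1-\gamma)\TIL(F,W_R)$, so $\TIL(F,W_R)\le(1-\gamma)^{-1}\TIL(F,W)\le\TIL(F,W)+\eta/2$.

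The hardest part is this last blow-up: one must orchestrate the cluster subdivisions so that all pairwise sub-rectangles remain regular simultaneously and with density bounded away from zero, for Lemma~\ref{lem:weakblowup} to apply. This is the standard ``regularity $+$ fractional blow-up'' transfer which is already the content of Theorem~\ref{thm:lowersemicontgraphs}, so that theorem (and its proof, given later in the paper) should directly supply the quantitative form needed.
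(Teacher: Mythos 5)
Your approach is genuinely different from the paper's and, more importantly, has two real gaps that I do not see how to close without essentially redoing the argument.

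\textbf{Gap 1: the ``cell-shadow'' claim fails.} You classify a tuple $I=(i_1,\ldots,i_k)$ as \emph{heavy} when $\int W_I^{\otimes F}\ge\alpha$, and after removing the $F$-copies supported on light tuples you conclude that every remaining $F$-copy of $U^*$ has its cell-shadow in $\mathcal{F}_F(R^{\#})$. But $R^{\#}$ keeps only pairs that are \emph{both} $\epsilon$-regular \emph{and} of density at least $d$. A tuple $I$ can be heavy — i.e.\ carry substantial $F$-density in $W$ — while one of its edges $(i_u,i_v)$ with $uv\in E(F)$ corresponds to a pair that is irregular (or was otherwise discarded by the regularization) and hence is \emph{not} an edge of $R^{\#}$. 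Such heavy tuples are not touched by your removal step, and the lifted cover $\mathfrak{c}^{U^*}$ need not cover the $F$-copies of $U^*$ sitting over them. A large $F$-density on a product of cells implies all the pairwise densities are bounded away from zero, but it does not imply regularity, which is what membership in $\mathcal{F}_F(R^{\#})$ requires.

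\textbf{Gap 2: $\TIL(F,W_R)\le\TIL(F,W)+\eta/2$ is not supplied by Theorem~\ref{thm:lowersemicontgraphs}.} That theorem says $\liminf_n\TIL(F,G_n)/v(G_n)\ge\TIL(F,W)$ for \emph{finite graphs} $G_n$ converging to $W$; it is a lower semicontinuity statement and gives no upper bound on the tiling number of a cut-norm approximation of $W$. What you actually need — a quantitative upper bound on $\TIL(F,W_R)$ in terms of $\TIL(F,W)$ for a specific cluster graphon $W_R$ close to $W$ — has the same flavour as Proposition~\ref{prop:uppercont} itself, and you would need to carry out the ``blow-up into a graphon'' transfer from scratch. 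This is nontrivial: the step of Theorem~\ref{thm:lowersemicontgraphs} you are invoking blows up a fractional tiling of a cluster graph into an actual tiling of a \emph{finite} graph via Lemmas~\ref{lem:randomsubpair} and \ref{lem:weakblowup}; translating this to produce a graphon tiling $\mathfrak{t}:\Omega^k\to[0,\infty)$ with $\SUPPORT\mathfrak{t}\subset\mathcal{F}_F(W)$ on the nose is a different (and not obviously easier) problem, since regularity of a graphon pair does not prevent $W^{\otimes F}$ from vanishing on a set of positive measure inside the corresponding product cell.

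For comparison, the paper's proof avoids both the regularity lemma and any blow-up. It takes an \emph{optimal fractional $F$-cover} $\mathfrak{c}$ of $W$, which exists by Theorem~\ref{thm:LPdualityGraphons} and~(\ref{eq:fcovattained}), and partitions $\Omega$ into the level sets $A_i=\mathfrak{c}^{-1}[0,\nicefrac{i}{\ell})$ of that cover. On any $k$-fold product of level sets where $\sum\mathfrak{c}<1$, the cover condition forces $W^{\otimes F}=0$; by Lemma~\ref{lem:cutdistVSsubgraphCount}, $U$ then has small $F$-density on each such product, and the partite removal lemma (Lemma~\ref{lem:removalpartiteNullify}) nullifies $U$ there on a set of small measure. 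After this removal, the slightly bumped cover $\mathfrak{c}'=\min(\mathfrak{c}+\eta,1)$ is a fractional $F$-cover of $U^*$, so LP duality gives $\TIL(F,U^*)\le\|\mathfrak{c}'\|\le\TIL(F,W)+\eta$ directly. Partitioning by level sets of an optimal cover rather than by a regularity partition is the key idea that makes the whole argument close, and it is missing from your proposal.
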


The proof of Proposition~\ref{prop:uppercont} is given in Section~\ref{subsec:ProofPropUpperCont}.
Let us state a version of Proposition~\ref{prop:uppercont} which
deals with the situation when the the graphon $W$ is approximated
by a finite graph and not a graphon.
\begin{prop}
\label{prop:uppercontGraph}Suppose that $F$ is an arbitrary graph
and that $W:\Omega^{2}\rightarrow[0,1]$ is a graphon. Then for an
arbitrary $\eta>0$ there exists a number $\delta>0$ such that in
each finite graph $G$ with $\DIST_{\square}(W,G)<\delta$ we can
erase at most $\eta v(G)^{2}$ edges (in a suitable way) so that for
the resulting graph $G^{*}$ we have $\TIL(F,G^{*})\le n\cdot\left(\TIL(F,W)+\eta\right)$.
\end{prop}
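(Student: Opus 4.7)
The plan is to adapt the cover-based proof of Proposition~\ref{prop:uppercont} to the finite setting, using the LP-duality of Theorem~\ref{thm:LPdualityGraphons} and the nullifying partite removal lemma (Lemma~\ref{lem:removalpartiteNullify}). First, pick a near-optimal fractional $F$-cover $\mathfrak{c}\colon\Omega\to[0,1]$ of $W$ with $\|\mathfrak{c}\|\le\TIL(F,W)+\eta/4$. Partition $\Omega$ into level sets $L_j:=\mathfrak{c}^{-1}\bigl([(j-1)/m,\,j/m)\bigr)$ for $j\in[m]$, where $m$ is large in terms of $\eta$. Call a tuple $(j_1,\dots,j_k)\in[m]^k$ \emph{bad} if $\sum_i j_i/m<1$, and \emph{good} otherwise. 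On every bad rectangle $\prod_i L_{j_i}$ the cover condition forces $W^{\otimes F}\equiv 0$ almost everywhere, since a point with $W^{\otimes F}>0$ would satisfy $\sum_i\mathfrak{c}(x_i)<1$, contradicting that $\mathfrak{c}$ is a cover.

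Now take a graph $G$ with $\DIST_\square(W,G)<\delta$ and fix a graphon representation $W_G$ with vertex cells $\{\Omega_v\}_{v\in V(G)}$ such that $\|W-W_G\|_\square<\delta$. Partition $V(G)$ according to level sets: $V_j:=\{v:\Omega_v\subseteq L_j\}$ for $j\in[m]$ and $V_0:=\{v:\Omega_v\text{ meets more than one }L_j\}$; note $|V_0|\le m$. For each bad tuple, Lemma~\ref{lem:cutdistVSsubgraphCount} applied with $P_i=L_{j_i}$ gives $\int_{\prod_i L_{j_i}}W_G^{\otimes F}\le\binom{k}{2}\delta$. Provided $\delta$ is small enough in terms of $m$ and $\eta$, we may apply Lemma~\ref{lem:removalpartiteNullify} (with $d=1$) to the $(A_1,\dots,A_k)$-partite version of $W_G$ with $A_i:=\bigsqcup_{v\in V_{j_i}}\Omega_v\subseteq L_{j_i}$ to nullify $W_G$ so that no $F$-copy with vertices in $(V_{j_1},\dots,V_{j_k})$ remains.

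The crucial step is to arrange the nullification to respect the cell-pair partition, so that it translates to edge deletions in $G$. Since $W_G$ is $\{0,1\}$-valued and constant on each cell-pair $\Omega_v\times\Omega_{v'}$, this can be achieved by re-examining the proof of Lemma~\ref{lem:removalpartiteNullify}: one chooses the regularity partition used internally to refine $\{\Omega_v\}$, so that the nullification set is automatically a union of whole cell-pair squares. With the parameter $\alpha$ from Lemma~\ref{lem:removalpartiteNullify} chosen so that pooling over the $\le m^k$ bad tuples covers a total $\Omega^2$-measure at most $\eta/3$, this yields at most $\eta n^2/3$ edge deletions. Additionally deleting every edge of $G$ incident to $V_0$ costs at most $|V_0|n\le mn\le\eta n^2/3$ for $n$ large. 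Let $G^*$ denote the resulting subgraph, so $|E(G)\setminus E(G^*)|\le\eta n^2$.

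By construction, every $F$-copy $(u_1,\dots,u_k)\in\mathcal{F}_F(G^*)$ has all $u_i\in V_{j(u_i)}$ for some good tuple $(j(u_1),\dots,j(u_k))$. Define the pull-back $\mathfrak{c}_G(v):=j(v)/m$ when $v\in V_{j(v)}$ and $\mathfrak{c}_G(v):=1$ when $v\in V_0$. This is a fractional $F$-cover of $G^*$ of total weight $\sum_v\mathfrak{c}_G(v)\le n\|\mathfrak{c}\|+n/m+m\le n(\TIL(F,W)+\eta)$ for $n$ large in terms of $m$ and $\eta$. Combining with the classical graph LP-duality $\FCOV(F,G^*)=\FTIL(F,G^*)$ and with $\TIL(F,G^*)\le\FTIL(F,G^*)$ closes the argument. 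The main obstacle is the cell-respecting nullification in the third paragraph, which requires opening up the proof of Lemma~\ref{lem:removalpartiteNullify}; alternatively, one could bypass Lemma~\ref{lem:removalpartiteNullify} entirely and apply the finite partite removal lemma directly to $G$.
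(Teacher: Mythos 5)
The broad architecture of your argument — pass to a near-optimal fractional $F$-cover $\mathfrak{c}$ of $W$, stratify by level sets of $\mathfrak{c}$, observe via Lemma~\ref{lem:cutdistVSsubgraphCount} that on each ``bad'' product of level sets the homomorphism density of $F$ in $W_{G}$ is tiny, invoke a removal lemma there, and then pull $\mathfrak{c}$ back to a fractional $F$-cover of $G^{*}$ and conclude by finite LP-duality — is exactly the intended analogue of the proof of Proposition~\ref{prop:uppercont}, and the issue you flag about making the nullification respect the cell-pair partition is real and is indeed correctly dispatched by your second suggested route (apply a finite partite removal lemma to $G$ directly, rather than opening up Lemma~\ref{lem:removalpartiteNullify}).

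However, there is a genuine gap that you have not flagged. You set $V_{j}:=\{v:\Omega_{v}\subseteq L_{j}\}$ and $V_{0}:=\{v:\Omega_{v}\text{ meets more than one }L_{j}\}$ and then assert ``note $|V_{0}|\le m$.'' This is unjustified, and in general false. The level sets $L_{j}=\mathfrak{c}^{-1}\bigl([(j-1)/m,j/m)\bigr)$ are arbitrary measurable subsets of $\Omega$ determined by the cover $\mathfrak{c}$, while the cells $\Omega_{v}$ of the chosen representation $W_{G}$ are arbitrary measure-$\nicefrac{1}{n}$ sets whose only constraint comes from $\|W-W_{G}\|_{\square}<\delta$. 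There is no reason at all for a single cell to be contained in a single level set; in fact, for a ``generic'' representation every cell can meet every level set, in which case $V_{0}=V(G)$. The estimate $|V_{0}|\le m$ would hold in the special picture where both $\{\Omega_{v}\}$ and $\{L_{j}\}$ are interval partitions of $[0,1]$, but the cut-distance condition only gives you \emph{some} representation $W_{G}$ with $\|W-W_{G}\|_{\square}<\delta$, and simultaneously rearranging $\Omega$ so that the $L_{j}$ become intervals, the cells become intervals, and the cut-norm bound is preserved is not possible in general (re-sorting the cells to align them with the $L_{j}$ can move $\Theta(1)$ mass and thus change $\|W-W_{G}\|_{\square}$ by $\Theta(1)$). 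Once $|V_{0}|$ is unbounded, both the step ``delete every edge incident to $V_{0}$'' and the bound ``$\sum_{v}\mathfrak{c}_{G}(v)\le n\|\mathfrak{c}\|+n/m+m$'' collapse.

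This is not merely a bookkeeping slip: it is exactly the place where adapting the graphon argument of Proposition~\ref{prop:uppercont} to a finite graph requires a new idea, because one needs to convert a cover $\mathfrak{c}:\Omega\to[0,1]$, which is not constant on the cells $\Omega_{v}$, into a cover on $V(G)$ without increasing its total weight by more than $O(\eta n)$ and while still being able to certify that the uncovered $F$-copies of $G$ are few. Naive cell-averaging of $\mathfrak{c}$ does not obviously work either (one can construct examples where the averaged weight is bounded away from covering, even with an $\eta$-slack), so the gap needs to be filled by a more careful argument rather than by a local patch to the sentence ``$|V_{0}|\le m$.''
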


Proposition~\ref{prop:uppercontGraph} is not stronger than Proposition~\ref{prop:uppercont}
since it deals with graphs only. On the other hand, Proposition~\ref{prop:uppercontGraph}
does not follow directly from Proposition~\ref{prop:uppercont} since
it contains the extra assertion that each edge is either entirely
deleted, or kept entirely untouched. We do not include a proof of
Proposition~\ref{prop:uppercontGraph} as it is analogous to that
of Proposition~\ref{prop:uppercont}.

Propositions~\ref{prop:uppercont} motivates the following definition.
Given a finite graph $F$, a number $\epsilon>0$ and a graphon $W:\Omega^{2}\rightarrow[0,1]$
we define 
\begin{equation}
\TIL_{\epsilon}(F,W)=\inf_{W^{-}}\TIL\left(F,W^{-}\right)\;,\label{eq:defTILrobust}
\end{equation}
where $W^{-}$ ranges over all graphons with $W^{-}\le W$ and with
$\left\Vert W-W^{-}\right\Vert _{1}\le\epsilon$. Similarly, for an
$n$-vertex graph $G$, we define
\[
\TIL_{\epsilon}(F,G)=\inf_{G^{-}}\TIL\left(F,G^{-}\right)\;,
\]
where $G^{-}$ ranges over all subgraphs of $G$ with at most $\epsilon n^{2}$
edges deleted from $G$. These ``robust versions of the tiling number''
are continuous.
\begin{thm}
\label{thm:robusttilingnumber}Suppose that $F$ is a finite graph
and $\epsilon>0$. Then the quantity $\TIL_{\epsilon}(F,\cdot)$ is
continuous on the space of graphons equipped with the cut-norm. 
\end{thm}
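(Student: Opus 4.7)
The plan is to establish both upper- and lower-semicontinuity of $W\mapsto\TIL_\epsilon(F,W)$ separately. Fix a sequence $(W_n)$ of graphons converging to $W$ in the cut-norm, and fix $\gamma>0$.

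For the upper-semicontinuity bound $\limsup_n\TIL_\epsilon(F,W_n)\le\TIL_\epsilon(F,W)$, the idea is to transport a near-optimal deletion from $W$ to each $W_n$. Choose $U\le W$ with $\int U\le\epsilon$ and $\TIL(F,W-U)\le\TIL_\epsilon(F,W)+\gamma/4$. Applying the explicit construction in the proof of Lemma~\ref{lem:subgraphonscoverge} simultaneously to the two subgraphons $U\le W$ and $W-U\le W$ yields $U_n,V_n\le W_n$ with $U_n+V_n=W_n$ (the blockwise formula there distributes $W_n$ additively), $V_n\to W-U$ and $U_n\to U$ in the cut-norm, and in particular $\int U_n\to\int U$. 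After a small truncation enforcing $\int U_n\le\epsilon$, Proposition~\ref{prop:uppercont} applied with reference graphon $W-U$ and perturbation level $\gamma/4$ furnishes, for all large $n$, a decrement $D_n\le V_n$ with $\|D_n\|_1\le\gamma/4$ and $\TIL(F,V_n-D_n)\le\TIL(F,W-U)+\gamma/4\le\TIL_\epsilon(F,W)+\gamma/2$. The combined deletion $U_n+D_n\le W_n$ then witnesses $\TIL_{\epsilon+\gamma/4}(F,W_n)\le\TIL_\epsilon(F,W)+\gamma/2$.

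For the lower-semicontinuity bound $\liminf_n\TIL_\epsilon(F,W_n)\ge\TIL_\epsilon(F,W)$, I argue by contradiction: assume $\TIL_\epsilon(F,W_n)\le\TIL_\epsilon(F,W)-\gamma$ along an infinite subsequence and pick near-optimal $U_n\le W_n$ with $\int U_n\le\epsilon$ and $\TIL(F,W_n-U_n)\le\TIL_\epsilon(F,W)-\gamma/2$. By Theorem~\ref{thm:BanachAlauglou}, pass to a further subsequence along which $U_n\to U$ weak-$^*$; the envelope inequalities $\int_{A\times B}U_n\le\int_{A\times B}W_n$ together with cut-norm convergence of $W_n$ give $U\le W$ a.e.\ and $\int U\le\epsilon$, so $\TIL_\epsilon(F,W)\le\TIL(F,W-U)$. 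Upgrading this convergence so that $V_n:=W_n-U_n\to W-U$ in the cut-norm along a further subsequence then completes the argument, since Theorem~\ref{thm:lowersemicont} would force $\liminf\TIL(F,V_n)\ge\TIL(F,W-U)$, contradicting the near-optimality hypothesis.

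Two technical points require care. First, the upper-semicontinuity argument yields $\TIL_{\epsilon+\gamma/4}(F,W_n)\le\TIL_\epsilon(F,W)+\gamma/2$ rather than the desired bound on $\TIL_\epsilon(F,W_n)$; the $\gamma/4$ slippage in the deletion budget is absorbed by rerunning the argument starting from a near-optimal $U$ for the reduced budget $\epsilon-\gamma/4$, at the cost of needing an auxiliary left-continuity statement for $\tau\mapsto\TIL_\tau(F,W)$ at fixed $W$, which can be proved by applying the same machinery in the degenerate case $W_n\equiv W$ with varying deletion profiles. Second and more delicately, upgrading the weak-$^*$ limit of $(U_n)$ to a cut-norm limit is the heart of the lower-semicontinuity argument; since weak-$^*$ convergence does not imply cut-norm convergence in general, I would expect to invoke Szemer\'edi's Regularity Lemma (Lemma~\ref{lem:RegularityLemma}) to reduce to the case where $W_n$ and $U_n$ are both well-approximated by step functions on a common partition, on which the two topologies coincide on bounded sets, and then extract a cut-norm convergent subsequence by a finite-dimensional compactness argument exploiting the envelope constraint. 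This regularity-based reduction is the step I expect to be most technical.
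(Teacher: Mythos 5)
Your proof follows essentially the same two-inequality strategy as the paper's, with the same key ingredients: Lemma~\ref{lem:subgraphonscoverge} to transport deletions from $W$ to $W_n$, a left-continuity claim for $\delta\mapsto\TIL_\delta(F,W)$ (the paper proves this as a short Claim by nullifying on a thin vertex slab), and a compactness extraction combined with Theorem~\ref{thm:lowersemicont} for the other direction. The only cosmetic difference is that you invoke Proposition~\ref{prop:uppercont} as a black box for the adjustment step, whereas the paper re-runs that argument via a near-optimal fractional cover of $W-U$ plus the Removal lemma; the two are interchangeable. One remark on your ``delicate step'': you are right that weak-$^*$ convergence of $(U_n)$ does not by itself give cut-norm convergence of $W_n-U_n$, and this needs a genuine argument (the paper simply cites sequential compactness of the graphon space, which strictly speaking is compactness in cut-\emph{distance} and so implicitly requires either a joint weak-regularity-partition hierarchy for $(U_n,W_n)$ or the observation that $\TIL_\epsilon(F,\cdot)$ is invariant under weak isomorphism); your proposed regularity-lemma reduction is a reasonable way to make this rigorous, and it is not harder than what the paper leaves to the reader.
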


\begin{thm}
\label{thm:robusttilingnumberGRAPHS}Suppose that $F$ is a finite
graph and $\epsilon>0$. Suppose that $\left(G_{n}\right)_{n}$ is
a sequence of graphs of growing orders that converges in the cut-distance
to a graphon $W$. Then the sequence $\left(\nicefrac{\TIL_{\epsilon}(F,G_{n})}{v(G_{n})}\right)_{n}$
converges to $\TIL_{\epsilon}(F,W)$.
\end{thm}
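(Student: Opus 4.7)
The strategy is to reduce to the graphon continuity statement Theorem~\ref{thm:robusttilingnumber}. Let $W_{G_n}$ denote a graphon representation of $G_n$, so that $(W_{G_n})_n$ converges to $W$ in the cut-distance. Since $\TIL_\epsilon(F, \cdot)$ is invariant under measure-preserving transformations, Theorem~\ref{thm:robusttilingnumber} gives $\TIL_\epsilon(F, W_{G_n}) \to \TIL_\epsilon(F, W)$, and it therefore suffices to prove the corresponding limit for $\nicefrac{\TIL_\epsilon(F, G_n)}{v(G_n)}$. (Any factor-of-two discrepancy arising from the symmetry of the $\mathcal{L}^1(\Omega^2)$-norm is absorbed into the conventions.) We argue the two one-sided bounds separately.

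For the $\liminf$ direction, let $G_n^-$ be a near-optimal subgraph witnessing $\TIL_\epsilon(F, G_n)$. By compactness of the graphon space under cut-distance and passing to a subsequence, $(W_{G_n^-})_n$ converges in cut-distance to some graphon $W^-$. A Banach--Alaoglu application (Theorem~\ref{thm:BanachAlauglou}) to the sequence $W_{G_n} - W_{G_n^-}$, whose $\mathcal{L}^1$-norm is at most $\epsilon$, lets us write $W^- = W - U^*$ for some $U^* \le W$ with $\|U^*\|_1 \le \epsilon$ (after suitable alignment of the representations). Then Theorem~\ref{thm:lowersemicontgraphs} applied to $(G_n^-)_n$ yields
\[
\liminf_n \frac{\TIL_\epsilon(F, G_n)}{v(G_n)} \;=\; \liminf_n \frac{\TIL(F, G_n^-)}{v(G_n)} \;\ge\; \TIL(F, W - U^*) \;\ge\; \TIL_\epsilon(F, W).
\]

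For the $\limsup$ direction, fix $\eta > 0$ and choose $U^* \le W$ with $\|U^*\|_1 \le \epsilon - 2\eta$ such that $\TIL(F, W - U^*) \le \TIL_\epsilon(F, W) + \eta$; this slack is attainable at continuity points of $\TIL_\epsilon$ in $\epsilon$, and exceptional $\epsilon$ can be handled by approximation. Construct a random subgraph $G_n^- \subseteq G_n$ by independently deleting each edge $ij$ with probability $\nu(\Omega_i \times \Omega_j)^{-1}\int_{\Omega_i \times \Omega_j} U^*$. Chernoff-type concentration ensures with high probability that $W_{G_n^-}$ is cut-norm close to $W - U^*$ and that at most $(\epsilon-\eta) v(G_n)^2$ edges were deleted. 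Apply Proposition~\ref{prop:uppercontGraph} to $G_n^-$ with threshold $\eta$ to obtain a further subgraph $G_n^{--}$, erasing at most $\eta v(G_n)^2$ more edges, with $\TIL(F, G_n^{--}) \le v(G_n)\bigl(\TIL(F, W - U^*) + \eta\bigr) \le v(G_n)\bigl(\TIL_\epsilon(F, W) + 2\eta\bigr)$. The total number of erasures from $G_n$ is at most $\epsilon v(G_n)^2$, so $G_n^{--}$ is a valid $\epsilon$-deletion, giving $\TIL_\epsilon(F, G_n) \le \TIL(F, G_n^{--})$. Letting $\eta \to 0$ closes the upper bound.

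The main technical obstacle lies in the upper bound: the random rounding step must simultaneously yield cut-norm convergence $W_{G_n^-} \to W - U^*$ and tight control on the number of deleted edges. The appeal to Proposition~\ref{prop:uppercontGraph} is essential, since $\TIL(F, \cdot)$ is not upper-semicontinuous on the graphon space (only $\TIL_\epsilon$ is, by Theorem~\ref{thm:robusttilingnumber}), and without the perturbation step one cannot transfer a graphon near-optimum for $\TIL_\epsilon(F, W)$ to a graph near-optimum for $\TIL_\epsilon(F, G_n)$.
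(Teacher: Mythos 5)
Your overall strategy matches what the paper intends (it explicitly says that Theorem~\ref{thm:robusttilingnumberGRAPHS} follows by ``combining the proofs of Theorem~\ref{thm:robusttilingnumber} and Theorem~\ref{thm:lowersemicontgraphs}''): the lower bound is run through Theorem~\ref{thm:lowersemicontgraphs}, and the upper bound mirrors the proof of~\eqref{eq:ROBUSTle} with Proposition~\ref{prop:uppercontGraph} in the role of Proposition~\ref{prop:uppercont} and a graph analogue of Lemma~\ref{lem:subgraphonscoverge} supplying the subgraph $G_n^-$. That identification is correct, and the first paragraph where you invoke Theorem~\ref{thm:robusttilingnumber} for $W_{G_n}$ is in fact a red herring: as the paper itself warns after Theorem~\ref{thm:lowersemicont}, $\TIL(F,G)/v(G)$ and $\TIL(F,W_G)$ are generally unrelated, and you never use that first reduction.

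There are, however, two genuine gaps. First, the deletion probability in your random rounding is wrong as stated. Deleting edge $ij$ with probability $p_{ij}=\nu(\Omega_i\times\Omega_j)^{-1}\int_{\Omega_i\times\Omega_j}U^*$ leaves the surviving edge with probability $1-p_{ij}$, so the expected local density of $G_n^-$ is roughly $W_{G_n}\cdot(1-\overline{U^*})$, which is not $W-U^*$ unless $W\equiv1$ locally. Concretely, take $W\equiv\nicefrac12$ and $U^*\equiv\nicefrac12$ on a small square $S$ (so $W-U^*\equiv0$ on $S$ and $\|U^*\|_1$ can be made arbitrarily small): your rounding keeps each edge inside $S$ with probability $\nicefrac12$, leaving density $\approx\nicefrac14$ on $S$ instead of $0$, so $W_{G_n^-}$ does not converge to $W-U^*$ in cut-norm. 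The fix is to condition on the edge being present, i.e. delete with probability $\overline{U^*}/\overline{W}$ as in the scaling factor $U^{ij}/W^{ij}$ used in the proof of Lemma~\ref{lem:subgraphonscoverge} --- and crucially this must be done over a \emph{coarse} partition (clusters of growing size), because on single-vertex rectangles the density of $G_n$ is $0$ or $1$ and does not approximate $\overline W$; only on coarse rectangles does cut-norm closeness guarantee that the local densities of $G_n$ track those of $W$.

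Second, the lower-bound direction conflates two incompatible modes of convergence. You pass to a cut-distance convergent subsequence to get $W^-$, then separately apply Banach--Alaoglu to $W_{G_n}-W_{G_n^-}$ to get a weak$^*$-limit, and assert $W^-=W-U^*$ ``after suitable alignment''. But cut-distance limits involve relabelling by measure-preserving bijections chosen along the subsequence, and those bijections need not be the identity, whereas weak$^*$-limits are taken in a fixed coordinate system; the two limits need not match. To make this rigorous one should first fix bijections $\varphi_n$ with $\|W_{G_n}^{\varphi_n}-W\|_\square\to0$, then work entirely with $W_{G_n}^{\varphi_n}$ and $W_{G_n^-}^{\varphi_n}\le W_{G_n}^{\varphi_n}$ in that fixed frame, and extract a limit $U^*$ with $0\le U^*\le W$ and $\|U^*\|_1\le\epsilon$ in a way that also lets you invoke Theorem~\ref{thm:lowersemicontgraphs} on $(G_{n_i}^-)$ converging in cut-distance to $W-U^*$. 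As written, you do not justify that the weak$^*$-limit, the cut-distance limit, and the pointwise domination $U^*\le W$ all coexist in the same coordinates.

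The factor-of-two discrepancy between $\epsilon n^2$ deleted edges and $\mathcal{L}^1$-mass $\epsilon$ is indeed a loose convention in the paper, so waving it off is reasonable; and your use of the left-continuity of $\delta\mapsto\TIL_\delta$ (the Claim in Section~\ref{subsec:ProofRobustTiling}) is the right device for the slack $\epsilon-2\eta$, though it holds everywhere, not only at ``continuity points''.
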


We shall give a proof of Theorem~\ref{thm:robusttilingnumber} in
Section~\ref{subsec:ProofRobustTiling}. We omit a proof of Theorem~\ref{thm:robusttilingnumberGRAPHS}
but it follows easily by combining the proofs of Theorem~\ref{thm:robusttilingnumber}
and Theorem~\ref{thm:lowersemicontgraphs}.

Let us give an interpretation of Theorem~\ref{thm:robusttilingnumberGRAPHS}
in property testing in the so-called dense model. Formally, let $\mathcal{G}$
be the class of all isomorphism classes of finite graphs. We say that
a function (called often a \emph{parameter}) $f:\mathcal{G}\rightarrow\mathbb{R}$
is \emph{testable} if for each $\epsilon>0$ there exists a number
$r=r(\epsilon)$ and a function (called often a \emph{tester}) $g:\mathcal{G}\rightarrow\mathbb{R}$
such that for each $G\in\mathcal{G}$, we have
\[
\mathbf{P}\left[\left|f(G)-g(H)\right|>\epsilon\right]<\epsilon\;,
\]
where $H=G[A]$ is the subgraph of $G$ induced by selecting a set
$A$ of $r$ vertices at random. A very convenient and concise characterization
of testable parameters was provided in~\cite{Borgs2008c} in the
language of graph limits: \emph{A graph parameter $f$ is testable
if and only if it is continuous in the cut-distance}.\footnote{See also~\cite{Lovasz} for more advanced connections between property
testing and graph limits.} Also, in the positive case, we can take the tester to be $g=f$.

Thus, Theorem~\ref{thm:robusttilingnumberGRAPHS} tells us that $\TIL_{\epsilon}(F,\cdot)$
is testable. For example, suppose that we have a large computer network
$G$ and we want to estimate the size of the largest possible matching
in it. But a small number of links between the computers may become
broken, and we do not know in advance which links these will be, so
we actually want to estimate $\TIL_{\epsilon}(K_{2},G)$. Then a good
estimate to this quantity can be provided just by computing $\TIL_{\epsilon}(K_{2},H)$
for a large, randomly selected induced subgraph $H$ of $G$.

\subsection{Fractional covers and LP duality}

We define fractional covers in graphons in a complete analogy to the
finite notion.
\begin{defn}
\label{def:covergraphon}Suppose that $W:\Omega^{2}\rightarrow[0,1]$
is a graphon, and $F$ is a graph on the vertex set~$[k]$. A measurable
function $\mathfrak{c}:\Omega\rightarrow[0,1]$ is called a \emph{fractional
$F$-cover} in $W$ if
\[
\nu^{k}\left(\text{\ensuremath{\mathcal{F}}}_{F}(W)\cap\left\{ (x_{1},x_{2},\ldots,x_{k})\in\Omega^{k}:\sum_{i=1}^{k}\mathfrak{c}(x_{i})<1\right\} \right)=0\;.
\]

The \emph{size} of $\mathfrak{c}$, denoted by $\|\mathfrak{c}\|$,
is defined by $\|\mathfrak{c}\|=\int_{x\in\Omega}\mathfrak{c}(x)\diffr\nu$.
The \emph{fractional $F$-cover number }$\FCOV(F,W)$ of $W$ is the
infimum of the sizes of fractional $F$-covers in $W$\emph{.}
\end{defn}

As with tilings, let us note that the notion of fractional covers
does not depend on the values of the graphon but only on its support.

The following proposition gives us a simple but useful relation between
the fractional $F$-cover number between a graph and its graphon representation.
\begin{prop}
\label{prop:FCOVgraphFCOV}Suppose that~$F$ and~$G$ are finite
graphs and that~$W_{G}$ is a graphon representation of~$G$. Let~$n$
be the number of vertices of~$G$. Then we have $\FCOV(F,G)=n\cdot\FCOV(F,W_{G})$.
\end{prop}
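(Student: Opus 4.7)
The plan is to mirror the structure of the proof of Proposition~\ref{prop:FTILgraphTILgraphon}: build a pair of natural maps between fractional $F$-covers of $G$ and fractional $F$-covers of $W_{G}$, and check that both the size and the covering condition transfer correctly, giving the two inequalities $n\cdot\FCOV(F,W_{G})\le\FCOV(F,G)$ and $\FCOV(F,G)\le n\cdot\FCOV(F,W_{G})$. Let $V(G)=[n]$ and let $\Omega=\Omega_{1}\sqcup\cdots\sqcup\Omega_{n}$ be the partition corresponding to $W_{G}$ (so $\nu(\Omega_{i})=1/n$ and $W_{G}$ is constant $\mathbf{1}[ij\in E(G)]$ on each block $\Omega_{i}\times\Omega_{j}$).

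For the first direction, given a fractional $F$-cover $\mathfrak{c}_{G}:V(G)\to[0,1]$ of $G$, I would define $\mathfrak{c}:\Omega\to[0,1]$ to be the step function with value $\mathfrak{c}_{G}(i)$ on $\Omega_{i}$. Then $\|\mathfrak{c}\|=\sum_{i}\mathfrak{c}_{G}(i)/n=\|\mathfrak{c}_{G}\|/n$. For the cover condition, note that almost every tuple $(x_{1},\ldots,x_{k})\in\Omega^{k}$ lies in some product cell $\Omega_{i_{1}}\times\cdots\times\Omega_{i_{k}}$; on such a cell $W_{G}^{\otimes F}$ is identically $\mathbf{1}[(i_{1},\ldots,i_{k})\in\mathcal{F}_{F}(G)]$, and when this indicator is $1$ the sum $\sum_{j}\mathfrak{c}(x_{j})=\sum_{j}\mathfrak{c}_{G}(i_{j})\ge 1$ by the assumption on $\mathfrak{c}_{G}$. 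This yields $\FCOV(F,W_{G})\le\FCOV(F,G)/n$.

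For the reverse direction, given a fractional $F$-cover $\mathfrak{c}:\Omega\to[0,1]$ of $W_{G}$, I would set $\mathfrak{c}_{G}(i)=n\int_{\Omega_{i}}\mathfrak{c}\,\mathrm{d}\nu$, i.e., the average value of $\mathfrak{c}$ on $\Omega_{i}$. Then $\mathfrak{c}_{G}(i)\in[0,1]$ and $\|\mathfrak{c}_{G}\|=n\int_{\Omega}\mathfrak{c}=n\|\mathfrak{c}\|$. The key step, which I expect to be the main (though still modest) obstacle, is verifying that $\sum_{j}\mathfrak{c}_{G}(i_{j})\ge 1$ for every $(i_{1},\ldots,i_{k})\in\mathcal{F}_{F}(G)$. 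Here I would use that whenever $(i_{1},\ldots,i_{k})\in\mathcal{F}_{F}(G)$ the whole block $\Omega_{i_{1}}\times\cdots\times\Omega_{i_{k}}$ lies inside $\mathcal{F}_{F}(W_{G})$, so the graphon cover inequality $\sum_{j}\mathfrak{c}(x_{j})\ge 1$ holds for $\nu^{k}$-almost every tuple in this block. Integrating this pointwise inequality over the block and applying Fubini gives
\[
\frac{1}{n^{k}}\le\int_{\Omega_{i_{1}}\times\cdots\times\Omega_{i_{k}}}\sum_{j=1}^{k}\mathfrak{c}(x_{j})\,\mathrm{d}\nu^{k}=\frac{1}{n^{k-1}}\sum_{j=1}^{k}\int_{\Omega_{i_{j}}}\mathfrak{c}\,\mathrm{d}\nu=\frac{1}{n^{k}}\sum_{j=1}^{k}\mathfrak{c}_{G}(i_{j}),
\]
which is exactly the required cover condition. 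This yields $\FCOV(F,G)\le n\cdot\FCOV(F,W_{G})$, and combining the two inequalities gives the claimed equality.

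A subtle point worth checking explicitly is that the graphon cover condition, phrased as a null-set statement on $\mathcal{F}_{F}(W_{G})$, does genuinely pass to the integral inequality on every edge-respecting block; this is immediate since each such block is contained in $\mathcal{F}_{F}(W_{G})$ up to a $\nu^{k}$-null set and has positive measure. Beyond this, the argument is a direct averaging and poses no real difficulty; the proof is essentially the dual analogue of the transference between $\FTIL(F,G)$ and $\TIL(F,W_{G})$ performed in Proposition~\ref{prop:FTILgraphTILgraphon}.
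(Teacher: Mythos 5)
Your proposal is correct, and its overall architecture (two transference maps, one inequality each) is the same as the paper's. The one place where you differ is the reverse direction $\FCOV(F,G)\le n\cdot\FCOV(F,W_G)$: the paper replaces $\mathfrak{c}$ on each cell $\Omega_i$ by its \emph{essential infimum} there, and argues that the resulting step function is still a cover of $W_G$ of no larger size (because $W_G$ is constant on rectangles, and one can find, up to a null set, a product point with $\mathfrak{c}$ arbitrarily close to all $k$ infima simultaneously), while you instead take the \emph{average} of $\mathfrak{c}$ on each $\Omega_i$ and verify the cover inequality by integrating the almost-everywhere pointwise inequality over the block via Fubini. Both arguments are elementary and short; yours has the small aesthetic advantage that it is the exact dual of the averaging used in Proposition~\ref{prop:FTILgraphTILgraphon} and avoids the $\varepsilon$-argument implicit in taking essential infima, while the paper's gives the stronger statement that one may assume without loss of generality that an optimal cover of $W_G$ is a step function and in fact pointwise dominated by the original cover. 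Your side remark about the null-set formulation passing to the integral on each block is exactly the right point to be careful about and is handled correctly.
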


\begin{proof}
Suppose that $\Omega_{1}\sqcup\Omega_{2}\sqcup\ldots\sqcup\Omega_{n}$
is the partition of the underlying probability space $\Omega$ given
by the graphon representation $W$. Any fractional $F$-cover of $G$
can be represented as a function with steps $\Omega_{1},\Omega_{2},\ldots,\Omega_{n}$.
This step-function is a fractional $F$-cover of $W_{G}$. This shows
that $\FCOV(F,G)\ge n\cdot\FCOV(F,W_{G})$. On the other hand, let
$\mathfrak{c}:\Omega\rightarrow[0,1]$ be an arbitrary fractional
$F$-cover of $W_{G}$. Let us modify $\mathfrak{c}$ on each set
$\Omega_{i}$ by replacing it by its essential infimum on that set.
Since $W_{G}$ is constant on the rectangles $\Omega_{i}\times\Omega_{j}$
we get that the modified step-function $\mathfrak{c}'$ is still a
fractional $F$-cover of $W_{G}$, obviously with $\|\mathfrak{c}'\|\le\|\mathfrak{c}\|$.
But $\mathfrak{c}'$ can be viewed as a fractional $F$-cover of $G$.
This shows that $\FCOV(F,G)\le n\cdot\FCOV(F,W_{G})$.
\end{proof}
The sequence of graphons representing a growing number of isolated
copies of $F$ converges to the constant zero graphon. This shows
that the fractional $F$-cover number is not continuous. We can, however,
establish lower semicontinuity using the following theorem.
\begin{thm}
\label{thm:limitOfCovers}Suppose that $F$ is a graph on the vertex
set $[k]$. Let $\left(W_{n}\right)$ be a sequence of graphons $W_{n}:\Omega^{2}\rightarrow[0,1]$
converging to a graphon $W$ in the cut-norm. Suppose that $\mathfrak{c}_{n}$
is a fractional $F$-cover of $W_{n}$. Then an arbitrary weak$^{*}$accumulation
point $\mathfrak{c}$ of $\left(\mathfrak{c}_{n}\right)$ is a fractional
$F$-cover of $W$.
\end{thm}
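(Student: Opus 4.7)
The plan is to argue by contradiction. Suppose that the weak$^*$ accumulation point $\mathfrak{c}$ fails to be a fractional $F$-cover of $W$, so that $\nu^{k}\left(\mathcal{F}_F(W)\cap\{\sum_i\mathfrak{c}(x_i)<1\}\right)>0$. Passing to a subsequence, I may assume $\mathfrak{c}_n\overset{\mathrm{w}^*}\longrightarrow\mathfrak{c}$. My goal is to produce a product rectangle on which the failure is detected robustly: there should exist measurable sets $P_1,\ldots,P_k\subset\Omega$ and constants $\alpha_1,\ldots,\alpha_k\in[0,1]$ with $\sum_i\alpha_i\le1-\epsilon$ for some $\epsilon>0$, such that $\mathfrak{c}(x)\le\alpha_i$ for a.e.\ $x\in P_i$ and such that $\int_{\prod_iP_i}W^{\otimes F}>0$. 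This is a routine discretization: partition $[0,1]$ into intervals of length $1/N$ with $N>2k/\epsilon$, take the preimages under $\mathfrak{c}$ as candidate sets, and observe that at least one product with $\sum_im_i/N\le1-\epsilon$ must intersect $\mathcal{F}_F(W)$ in positive measure, by pigeonhole.

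The heart of the argument is the elementary inequality
\[
\int_{\prod_iP_i}W_n^{\otimes F}(\mathbf{x})\sum_{i=1}^k\mathfrak{c}_n(x_i)\,\mathrm{d}\nu^k\;\ge\;\int_{\prod_iP_i}W_n^{\otimes F}\,\mathrm{d}\nu^k,
\]
valid because $W_n^{\otimes F}=0$ outside $\mathcal{F}_F(W_n)$, while on $\mathcal{F}_F(W_n)$ the cover condition gives $\sum_i\mathfrak{c}_n(x_i)\ge1$. I will pass to the limit $n\to\infty$ on both sides. The right-hand side tends to $\int_{\prod_iP_i}W^{\otimes F}>0$ directly by Lemma~\ref{lem:cutdistVSsubgraphCount}.

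The left-hand side requires two steps. First I will replace $W_n^{\otimes F}$ by $W^{\otimes F}$ under the integral. This is the main obstacle, since Lemma~\ref{lem:cutdistVSsubgraphCount} controls integrals of $W_n^{\otimes F}-W^{\otimes F}$ only over product sets, whereas here the integrand is also weighted by the arbitrary function $\mathfrak{c}_n$. I resolve this by discretizing $\mathfrak{c}_n$ into a step function taking values $m/M$ on level sets $A_i^{m}\subset P_i$ for $m=0,\ldots,M$; applying Lemma~\ref{lem:cutdistVSsubgraphCount} to each of the finitely many product rectangles $A_i^{m}\times\prod_{j\ne i}P_j$ yields a total error of order $M\binom{k}{2}\|W_n-W\|_{\square}+1/M$, which can be driven to zero by a diagonal choice of $M=M(n)\to\infty$. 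Second, once the integrand is $W^{\otimes F}(\mathbf{x})\mathfrak{c}_n(x_i)$, I rewrite each term as $\int_\Omega\mathfrak{c}_n(x_i)\,h_i(x_i)\,\mathrm{d}\nu(x_i)$ with
\[
h_i(x_i)=\mathbf{1}_{P_i}(x_i)\int_{\prod_{j\ne i}P_j}W^{\otimes F}(\mathbf{x})\,\mathrm{d}\nu^{k-1};
\]
since $h_i\in\mathcal{L}^\infty(\Omega)\subset\mathcal{L}^1(\Omega)$, weak$^*$ convergence $\mathfrak{c}_n\to\mathfrak{c}$ gives convergence to $\int h_i\mathfrak{c}\,\mathrm{d}\nu$.

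Combining the two limits, the left-hand side converges to $\int_{\prod_iP_i}W^{\otimes F}(\mathbf{x})\sum_i\mathfrak{c}(x_i)\,\mathrm{d}\nu^k$, which by the pointwise bounds $\mathfrak{c}(x_i)\le\alpha_i$ on $P_i$ is at most $(1-\epsilon)\int_{\prod_iP_i}W^{\otimes F}$. Comparing with the limit of the right-hand side yields $\int_{\prod_iP_i}W^{\otimes F}\le(1-\epsilon)\int_{\prod_iP_i}W^{\otimes F}$, contradicting positivity of $\int_{\prod_iP_i}W^{\otimes F}$ and completing the proof.
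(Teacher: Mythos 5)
Your proof is correct and takes a genuinely different route from the paper. The paper also argues by contradiction, but after isolating a box $\prod_i B_i$ on which $W^{\otimes F}$ is uniformly bounded below on most points, it uses weak$^*$ convergence in a Chebyshev-style manner: from $\int_{B_i}\mathfrak{c}_n\to\int_{B_i}\mathfrak{c}$ it extracts, for each $i$, a subset $D_i\subset B_i$ of measure $\gtrsim\nu(B_i)$ on which $\mathfrak{c}_n$ is pointwise small; the cover constraint then forces $W_n^{\otimes F}\equiv 0$ on $\prod_i D_i$, contradicting cut-norm closeness to $W$ via Lemma~\ref{lem:cutdistVSsubgraphCount}. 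You instead pass the LP inequality $\int_{\prod P_i}W_n^{\otimes F}\sum_i\mathfrak{c}_n(x_i)\ge\int_{\prod P_i}W_n^{\otimes F}$ to the limit directly, handling the awkward mixed integrand $W_n^{\otimes F}\cdot\mathfrak{c}_n$ by discretizing $\mathfrak{c}_n$ into level sets (so that Lemma~\ref{lem:cutdistVSsubgraphCount} applies to each sub-rectangle) with a diagonal choice $M=M(n)\to\infty$, $M(n)\|W_n-W\|_\square\to 0$, and then invoking weak$^*$ convergence against the fixed kernel $h_i$. Both proofs are of comparable length; yours is arguably more streamlined conceptually, as it is literally a continuity statement for the finite-dimensional LP inequality, whereas the paper's is a somewhat more hands-on measure-extraction argument.

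One small point of exposition: in your reduction step you need $\epsilon>0$ to exist \emph{before} the pigeonhole, so that $\nu^k\bigl(\mathcal{F}_F(W)\cap\{\sum_i\mathfrak{c}(x_i)<1-\epsilon\}\bigr)>0$. This follows from countable additivity (the set $\{\sum_i\mathfrak{c}(x_i)<1\}$ is an increasing union over $\epsilon\downarrow 0$), but as written it reads as though the pigeonhole on $\{\sum_i\mathfrak{c}(x_i)<1\}$ alone produces a rectangle with $\sum_i m_i/N\le 1-\epsilon$; with $\alpha_i=(m_i+1)/N$ that would only give $\sum_i\alpha_i\le 1+(k-1)/N$, which is useless. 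Fix $\epsilon$ first, then take $N>2k/\epsilon$, and the pigeonhole on the shrunken set does yield $\sum_i\alpha_i\le 1-\epsilon/2$.
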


We give the proof of Theorem~\ref{thm:limitOfCovers} in Section~\ref{subsec:ProofTheoremLimitOfCovers}.

Recall that if a sequence of functions $\left(f_{n}\right)$ weak$^{*}$
converges to a function $f$, we have $\lim\int f_{n}=\int f$. Also,
recall that by Theorem~\ref{thm:BanachAlauglou} the set of all functions
of $\mathcal{L}^{\infty}$-norm at most 1 is sequentially compact
with respect to the weak$^{*}$ topology. Thus, we get the following.
\begin{cor}
\label{cor:liminfCovers} Suppose that $F$ is a finite graph. Suppose
that $\left(W_{n}\right)$ is a sequence of graphons converging to
$W$. Then $\liminf_{n}\FCOV(F,W_{n})\ge\FCOV(F,W)$.
\end{cor}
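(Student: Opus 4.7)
The plan is to combine Theorem~\ref{thm:BanachAlauglou}, Theorem~\ref{thm:limitOfCovers}, and the defining property of weak$^{*}$ convergence. Roughly, we compactify the sequence of near-optimal fractional $F$-covers of $W_n$, pass to a weak$^{*}$ limit to obtain a fractional $F$-cover of $W$, and then use that weak$^{*}$ convergence against the constant function $\mathbf{1}\in\mathcal{L}^{1}(\Omega)$ preserves the integral.

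In detail, first I would pass to a subsequence $(W_{n_j})$ on which $\FCOV(F,W_{n_j})$ actually attains the value $\liminf_n\FCOV(F,W_n)=:L$ (with the convention $L=+\infty$ being handled trivially, since then the inequality is vacuous). For each $j$, pick a fractional $F$-cover $\mathfrak{c}_{n_j}$ of $W_{n_j}$ with $\|\mathfrak{c}_{n_j}\|\le\FCOV(F,W_{n_j})+\tfrac{1}{j}$. Since each $\mathfrak{c}_{n_j}$ takes values in $[0,1]$, its $\mathcal{L}^{\infty}(\Omega)$-norm is at most~$1$, so by Theorem~\ref{thm:BanachAlauglou} we may extract a further subsequence (which I will still denote by $(\mathfrak{c}_{n_j})$) that converges in the weak$^{*}$ topology to some $\mathfrak{c}\in\mathcal{L}^{\infty}(\Omega)$ with $0\le\mathfrak{c}\le 1$ almost everywhere.

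The sequence $(W_{n_j})$ still converges to $W$ in the cut-norm, so Theorem~\ref{thm:limitOfCovers} applies and tells us that the weak$^{*}$ limit $\mathfrak{c}$ is a fractional $F$-cover of $W$. Consequently $\|\mathfrak{c}\|\ge\FCOV(F,W)$. Finally, since the constant function $\mathbf{1}$ belongs to $\mathcal{L}^{1}(\Omega)$, the definition of weak$^{*}$ convergence gives
\[
\|\mathfrak{c}_{n_j}\|=\int\mathfrak{c}_{n_j}\cdot\mathbf{1}\;\longrightarrow\;\int\mathfrak{c}\cdot\mathbf{1}=\|\mathfrak{c}\|\;.
\]
Combining this with $\|\mathfrak{c}_{n_j}\|\le\FCOV(F,W_{n_j})+\tfrac{1}{j}$ and $\|\mathfrak{c}\|\ge\FCOV(F,W)$ yields $L=\lim_j\FCOV(F,W_{n_j})\ge\FCOV(F,W)$, which is the desired inequality.

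There is no genuine obstacle: the structural work has already been carried out in Theorem~\ref{thm:limitOfCovers}, and the remaining ingredients (sequential Banach--Alaoglu plus the trivial fact that $\mathbf{1}\in\mathcal{L}^{1}(\Omega)$ because $\Omega$ is a probability space) fit together without friction. The only minor care to take is the subsequence extraction to handle the $\liminf$ correctly and the verification that the weak$^{*}$ limit inherits the pointwise bound $0\le\mathfrak{c}\le 1$ needed to regard it as a legitimate fractional cover.
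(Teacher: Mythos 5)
Your proof is correct and follows essentially the same route as the paper: the paper also derives the corollary by combining the sequential Banach--Alaoglu compactness (Theorem~\ref{thm:BanachAlauglou}), Theorem~\ref{thm:limitOfCovers}, and the fact that weak$^{*}$ convergence against $\mathbf{1}\in\mathcal{L}^1(\Omega)$ preserves the integral. You have merely spelled out the subsequence bookkeeping and the $0\le\mathfrak{c}\le1$ check more explicitly than the paper does.
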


Corollary~\ref{cor:liminfCovers} has two important consequences.
Firstly, it tells us that $\FCOV(F,\cdot)$ is lower-semicontinuous
with respect to the cut-distance. Secondly, if we take $W_{1}=W_{2}=\cdots=W$,
and $\mathfrak{c}_{n}$ a sequence of fractional $F$-covers whose
sizes tend to $\FCOV(F,W)$, we get a fractional $F$-tiling which
attains the value $\FCOV(F,W)$. Thus, unlike with $\TIL(F,W)$, 
\begin{equation}
\mbox{the value of \ensuremath{\FCOV(F,W)} is attained by some fractional \ensuremath{F}-cover.}\label{eq:fcovattained}
\end{equation}

We are now ready to state the LP duality theorem for tilings in graphons.
\begin{thm}
\label{thm:LPdualityGraphons}Suppose that $W:\Omega^{2}\rightarrow[0,1]$
is a graphon and $F$ is an arbitrary finite graph. Then we have $\TIL(F,W)=\FCOV(F,W)$.
\end{thm}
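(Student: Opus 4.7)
I will establish the two inequalities $\TIL\le\FCOV$ and $\TIL\ge\FCOV$ separately.

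For the easy direction $\TIL(F,W)\le\FCOV(F,W)$, my plan is a direct Fubini computation: given any $F$-tiling $\mathfrak{t}$ and any fractional $F$-cover $\mathfrak{c}$, I would bound
\[
\|\mathfrak{t}\|=\int\mathfrak{t}\,\mathrm{d}\nu^k\;\le\;\int\mathfrak{t}(\mathbf{x})\sum_{\ell=1}^{k}\mathfrak{c}(x_\ell)\,\mathrm{d}\nu^k,
\]
using the support condition~\eqref{eq:tilingsupport} together with the defining inequality of a fractional cover. Reordering the integration by Fubini rewrites the right-hand side as $\int_\Omega\mathfrak{c}(x)\bigl(\sum_{\ell}\int\mathfrak{t}(\ldots,x,\ldots)\,\mathrm{d}\nu^{k-1}\bigr)\mathrm{d}\nu(x)$, and the vertex constraint~\eqref{eq:IntegralAtMostOne} bounds this by $\|\mathfrak{c}\|$. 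Taking $\sup_{\mathfrak{t}}\inf_{\mathfrak{c}}$ yields the desired inequality.

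For the hard direction $\TIL(F,W)\ge\FCOV(F,W)$, the plan is to exhibit a fractional $F$-cover of $W$ of size at most $\TIL(F,W)$ as a weak$^{*}$ limit of optimal covers of finite-graph approximants whose perturbations are controlled by Proposition~\ref{prop:uppercont}. Fix a sequence $\eta_n\to 0$ and let $\delta(\cdot)$ be the function supplied by Proposition~\ref{prop:uppercont}; pick graphs $G_n$ with $\DIST_{\square}(W,W_{G_n})<\delta(\eta_n)$. Applying Proposition~\ref{prop:uppercont} to $U=W_{G_n}$ produces a graphon $U_n^{*}$ with $\|W_{G_n}-U_n^{*}\|_{1}\le\eta_n$ and $\TIL(F,U_n^{*})\le\TIL(F,W)+\eta_n$. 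A crucial technical point, which I read off from the construction in Section~\ref{subsec:ProofPropUpperCont}, is that $U_n^{*}$ may be taken to be a step graphon of the form $W_{G_n^{*}}$ for some subgraph $G_n^{*}\subseteq G_n$. Granting this, Propositions~\ref{prop:FTILgraphTILgraphon} and~\ref{prop:FCOVgraphFCOV} combined with finite LP duality $\FTIL(F,G_n^{*})=\FCOV(F,G_n^{*})$, together with the fact that $\TIL$ and $\FCOV$ depend only on the support of the graphon, yield
\[
\FCOV(F,U_n^{*})=\TIL(F,U_n^{*})\le\TIL(F,W)+\eta_n.
\]
I now let $\mathfrak{c}_n$ be an optimal cover of $U_n^{*}$. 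Since $\|W-U_n^{*}\|_{\square}\le\delta(\eta_n)+\eta_n\to 0$, the sequence $U_n^{*}$ converges to $W$ in the cut-distance. Theorem~\ref{thm:BanachAlauglou} extracts a weak$^{*}$-convergent subsequence $\mathfrak{c}_{n_k}\to\mathfrak{c}$; Theorem~\ref{thm:limitOfCovers} asserts that $\mathfrak{c}$ is a fractional cover of $W$, so $\FCOV(F,W)\le\|\mathfrak{c}\|$. Testing the weak$^{*}$ convergence against the constant function $1\in\mathcal{L}^{1}(\Omega)$ gives $\|\mathfrak{c}\|=\lim\|\mathfrak{c}_{n_k}\|\le\TIL(F,W)$, completing the argument.

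The step I expect to be the main obstacle is the technical verification that $U_n^{*}$ retains the $0/1$ step structure of $W_{G_n}$, because this is exactly what makes finite LP duality applicable to it. This is not part of the statement of Proposition~\ref{prop:uppercont}; it has to be read off from its proof, where the $\mathcal{L}^{1}$-small decrease is naturally implemented by nullifying partition rectangles via Lemma~\ref{lem:removalpartiteNullify}, keeping $U_n^{*}$ a graphon representation of a subgraph of $G_n$. If needed, this can be re-established by applying Lemma~\ref{lem:removalpartiteNullify} directly to the partite version of $W_{G_n}$. Note that Corollary~\ref{cor:liminfCovers} alone is insufficient here, since it gives $\liminf\FCOV(F,U_n^{*})\ge\FCOV(F,W)$, which is the \emph{wrong} direction for bounding $\FCOV(F,W)$ from above by $\TIL(F,W)$.
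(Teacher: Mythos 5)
Your easy direction $\TIL\le\FCOV$ is exactly the paper's argument (Proposition~\ref{prop:LPdualityEasy}) and is correct.

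The hard direction, however, has a genuine circularity. You invoke Proposition~\ref{prop:uppercont} as given. But the paper's proof of Proposition~\ref{prop:uppercont} (Section~\ref{subsec:ProofPropUpperCont}) \emph{begins} by fixing an optimal fractional $F$-cover of $W$ and cites Theorem~\ref{thm:LPdualityGraphons} there; what the construction actually yields is a cover $\mathfrak{c}'$ of $U^*$ of size $\le\FCOV(F,W)+\eta$, and the conclusion $\TIL(F,U^*)\le\TIL(F,W)+\eta$ is obtained by substituting $\TIL(F,W)=\FCOV(F,W)$ — i.e., by using the very duality you wish to prove. If you strip Proposition~\ref{prop:uppercont} down to what its proof establishes without LP duality, you get $\FCOV(F,U_n^*)\le\FCOV(F,W)+\eta_n$, and then your weak$^*$ limit/Corollary~\ref{cor:liminfCovers} step produces only the tautology $\FCOV(F,W)\le\FCOV(F,W)$; the quantity $\TIL(F,W)$ never enters the chain in a non-circular way. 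The paper avoids this by working in the opposite direction: it never bounds $\TIL(F,\cdot)$ of the approximants by $\TIL(F,W)$, but instead takes an $\mathcal{L}^1$-close step-graphon $W_N''$ with carefully controlled support (truncation of tiny values plus nullification of ``shoddy'' rectangles), applies finite LP duality to produce a \emph{tiling} of $W_N''$ of weight $\approx\FCOV(F,W_N'')\ge\FCOV(F,W)-\nicefrac{\epsilon}{2}$, and transfers that tiling to a tiling of $W$ with only a multiplicative $1-\binom{k}{2}\sqrt[4]{\nicefrac{1}{N}}$ loss, using the shoddy-rectangle bound to control the mass on $\SUPPORT (W_N'')^{\otimes F}\setminus\SUPPORT W^{\otimes F}$.

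Two smaller remarks. First, you say Corollary~\ref{cor:liminfCovers} is ``the wrong direction,'' but $\liminf\FCOV(F,U_n^*)\ge\FCOV(F,W)$ is precisely what you need for the final concatenation; your explicit Banach--Alaoglu plus Theorem~\ref{thm:limitOfCovers} argument is just re-deriving it. Second, even setting the circularity aside, your claim that $U_n^*$ may be taken to be $W_{G_n^*}$ is not automatic from the proof of Proposition~\ref{prop:uppercont}: that proof nullifies on the folded removal-lemma sets $B_{\mathbf{i}}$, which come from a partition by the \emph{level sets of the cover} $\mathfrak{c}$ and have no reason to respect the vertex-partition squares of $W_{G_n}$. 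The graph version is the unproved Proposition~\ref{prop:uppercontGraph}, which for your purpose would again have to be re-proved without Theorem~\ref{thm:LPdualityGraphons}. To repair the argument you would effectively have to rebuild the support control by hand (as the paper does with its $W_n\to W_n'\to W_n''$ construction), at which point you recover the paper's proof itself.
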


We give the proof of Theorem~\ref{thm:LPdualityGraphons} in Section~\ref{subsec:ProofTheoremLPDualityGraphons}.

Theorem~\ref{thm:LPdualityGraphons} is a very convenient tool for
extremal graph theory. More specifically, suppose that we want to
prove that for a fixed graph $F$ and for a family of graphs $\mathcal{G}$
we have that $\TIL(F,G)\ge(\gamma+o(1))v(G)$ for each $G\in\mathcal{G}$.
(Here, $o(1)$ tends to 0 as $v(G)$ goes to infinity.) By combining
Theorem~\ref{thm:lowersemicontgraphs} and Theorem~\ref{thm:LPdualityGraphons}
it suffices to show that no graphon arising as a limit of graphs from
$\mathcal{G}$ has a fractional $F$-cover of size less than $\gamma$.
We shall see one particular application of this scheme in Section~\ref{sec:RandomGraphs}
and another one is used in the accompanying paper~\cite{HlHuPi:Komlos}
on Komlós's Theorem.

\section{Proofs\label{sec:Proofs}}

Figure~\ref{fig:proofStruct} shows the dependencies and the key
steps in the proofs of our main results. 
\begin{figure}
\includegraphics[width=0.8\textwidth]{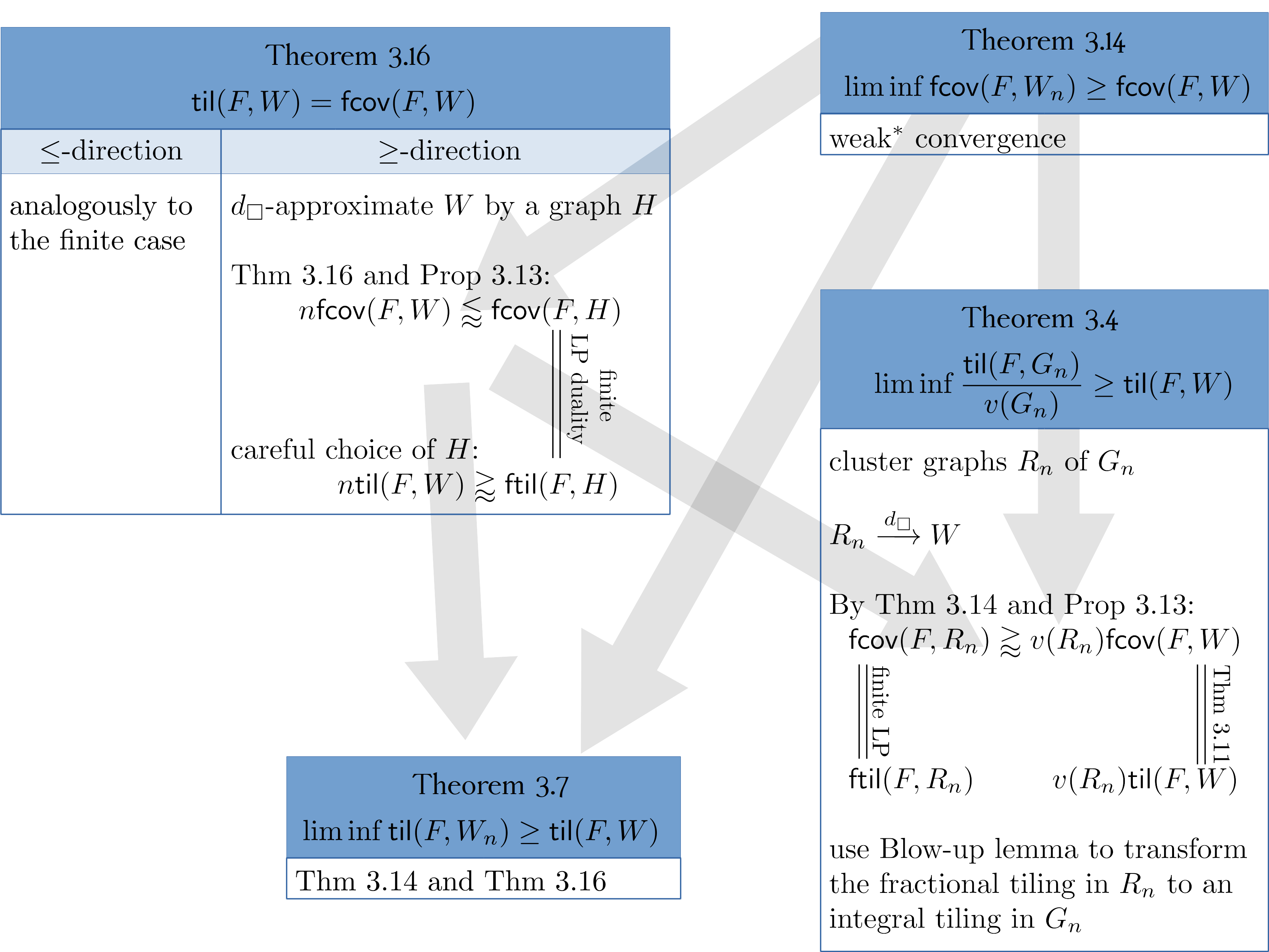}\caption{Main proof steps and dependencies of main results from Section~\ref{sec:StatementOfTheResults}.}
\label{fig:proofStruct}
\end{figure}

\subsection{Proof of Theorem~\ref{thm:limitOfCovers}\label{subsec:ProofTheoremLimitOfCovers}}

Without loss of generality let us assume that $\mathfrak{c}_{n}\WEAKCONV\mathfrak{c}$.
In order to show that $\mathfrak{c}$ is a fractional $F$-cover of
$W$, we need to show that for any measurable subsets $A_{1},\ldots,A_{k}\subset\Omega$
for which there are numbers $\alpha_{1},\ldots,\alpha_{k}$, $\sum\alpha_{i}<1$
with $\mathfrak{c}_{\restriction A_{i}}<\alpha_{i}$, we have $\int_{(x_{1},\ldots,x_{k})\in\prod A_{i}}W^{\otimes F}(x_{1},\ldots,x_{k})\diffr\nu^{k}=0$.
Let $a=1-\sum\alpha_{i}$. Suppose for contradiction that $\int_{(x_{1},\ldots,x_{k})\in\prod A_{i}}W^{\otimes F}(x_{1},\ldots,x_{k})\diffr\nu^{k}>0$.
Let $\xi>0$ be such that the set 
\[
\mathbf{X}=\left\{ \mathbf{x}\in\prod_{i}A_{i}:W^{\otimes F}\left(\mathbf{x}\right)>\xi\right\} 
\]
has positive $\nu^{k}$-measure. Since $k$-dimensional boxes generate
the sigma-algebra on $\Omega^{k}$, for each $d>0$ we can find sets
$B_{1}\subset A_{1},\ldots,B_{k}\subset A_{k}$ of positive measure
such that
\begin{equation}
\nu^{k}\left(\mathbf{X}\cap\prod_{i}B_{i}\right)\ge(1-d)\prod_{i}\nu(B_{i})\;.\label{eq:James}
\end{equation}
Let us fix such sets for $d:=(\nicefrac{a}{12k})^{k}$.

Let us take $N_{1}$ such that for each $n>N_{1}$ we have $\|W-W_{n}\|_{\square}<(\nicefrac{a}{13k})^{k+2}\cdot\prod_{i}\nu(B_{i})\cdot\xi$.
Lemma~\ref{lem:cutdistVSsubgraphCount} tells us that for each collection
$D_{1},\ldots,D_{k}$ of measurable subsets of $\Omega$ we have
\begin{equation}
\left|\int_{(x_{1},\ldots,x_{k})\in\prod D_{i}}W_{n}^{\otimes F}(x_{1},\ldots,x_{k})\diffr\nu^{k}-\int_{(x_{1},\ldots,x_{k})\in\prod D_{i}}W^{\otimes F}(x_{1},\ldots,x_{k})\diffr\nu^{k}\right|<(\nicefrac{a}{13k})^{k}\cdot\prod_{i}\nu(B_{i})\cdot\xi\;.\label{eq:D}
\end{equation}

Since $\mathfrak{c}_{n}\WEAKCONV\mathfrak{c}$, there exists a number
$N_{2}$ such that for each $n>N_{2}$ we have
\begin{equation}
\int_{x\in B_{i}}\mathfrak{c}_{n}(x)\diffr\nu\le\int_{x\in B_{i}}\mathfrak{c}(x)+\frac{a}{3k}\diffr\nu\le\nu(B_{i})\left(\alpha_{i}+\frac{a}{3k}\right)\quad\mbox{for each \ensuremath{i=1,\ldots,k}.}\label{eq:wcconv}
\end{equation}

Now, fix $n>\max(N_{1},N_{2})$. For $i=1,\ldots,k$, define $D_{i}=B_{i}\cap\mathfrak{c}_{n}^{-1}\left([0,\alpha_{i}+\nicefrac{2a}{3k}]\right)$.
Then we have
\begin{eqnarray*}
\int_{x\in B_{i}}\mathfrak{c}_{n}(x)\diffr\nu & = & \int_{x\in D_{i}}\mathfrak{c}_{n}(x)\diffr\nu+\int_{x\in B_{i}\setminus D_{i}}\mathfrak{c}_{n}(x)\diffr\nu\\
 & \ge & \nu(D_{i})\cdot0+\nu(B_{i}\setminus D_{i})\cdot\left(\alpha_{i}+\nicefrac{2a}{3k}\right)\\
 & = & \nu(B_{i})\left(\alpha_{i}+\nicefrac{2a}{3k}-\frac{\nu(D_{i})}{\nu(B_{i})}\cdot\left(\alpha_{i}+\nicefrac{2a}{3k}\right)\right)\;.
\end{eqnarray*}
Plugging this into~(\ref{eq:wcconv}) we get
\[
\nu(B_{i})\left(\alpha_{i}+\nicefrac{2a}{3k}-2\frac{\nu(D_{i})}{\nu(B_{i})}\right)\le\nu(B_{i})\left(\alpha_{i}+\nicefrac{a}{3k}\right)\;,
\]
and consequently,
\begin{equation}
\nu(D_{i})\ge\frac{a}{6k}\cdot\nu(B_{i})\;.\label{eq:Brown}
\end{equation}
Observe also that for each $\mathbf{x}\in\prod D_{i}$ we have $\sum_{i}\mathfrak{c}_{n}(\mathbf{x}_{i})\le\sum_{i}\left(\alpha_{i}+\nicefrac{2a}{3k}\right)=1-\nicefrac{a}{3}<1$.
Since $\mathfrak{c}_{n}$ is a fractional $F$-cover of $W_{n}$,
we conclude that
\begin{equation}
\int_{(x_{1},\cdots,x_{k})\in\prod D_{i}}W_{n}^{\otimes F}(x_{1},\ldots,x_{k})\diffr\nu^{k}=0\;.\label{eq:DD}
\end{equation}
On the other hand,
\begin{eqnarray*}
\int_{(x_{1},\cdots,x_{k})\in\prod D_{i}}W_{n}^{\otimes F}(x_{1},\ldots,x_{k})\diffr\nu^{k} & \ge & \int_{(x_{1},\cdots,x_{k})\in\mathbf{X}\cap\prod D_{i}}\xi\diffr\nu^{k}\\
\JUSTIFY{by\,\eqref{eq:James}} & \ge & \left(\prod_{i}\nu(D_{i})-d\prod_{i}\nu(B_{i})\right)\xi\\
\JUSTIFY{by\,\eqref{eq:Brown}} & \ge & (\nicefrac{a}{12k})^{k}\prod_{i}\nu(B_{i})\cdot\xi\;.
\end{eqnarray*}
This, together with~(\ref{eq:DD}) contradicts~(\ref{eq:D}).

\subsection{Proof of Theorem~\ref{thm:LPdualityGraphons}\label{subsec:ProofTheoremLPDualityGraphons}}

We split the proof into the easy ``$\le$''-part and the difficult
``$\ge$''-part. The former is given in Proposition~\ref{prop:LPdualityEasy}
and its proof is a straightforward modification of the finite version.
The latter is given in Proposition~\ref{prop:LPdualityHard}. In
the proof of Proposition~\ref{prop:LPdualityHard} we first approximate
a graphon by a finite graph on which we use finite LP-duality as a
black-box.

We remark that this approach seems ad-hoc and it is reasonable to
ask for a more conceptual proof. Indeed, as was pointed out to us
by an anonymous referee, there is fairly rich literature on ``algebraic
infinite-dimensional duality'' (see~\cite{MR893179} for a survey).
This theory would certainly not be able to give the results we need.
The referee also pointed out to us ``topological infinite-dimensional
duality'' developed in~\cite{MR1980661} which they suggested might
provide a blackbox proof of Theorem~\ref{thm:LPdualityGraphons}.
We were, however, unable to apply this theory in our setting.
\begin{prop}
\label{prop:LPdualityEasy}Suppose that $W:\Omega^{2}\rightarrow[0,1]$
is a graphon defined on $\Omega$ and $F$ is a graph on the vertex
set $[k]$. Suppose that $\mathfrak{c}:\Omega\rightarrow[0,1]$ is
a fractional $F$-cover of $W$ and that $\mathfrak{t}:\Omega^{k}\rightarrow[0,1]$
is an $F$-tiling in $W$. Then $\|\mathfrak{t}\|\le\|\mathfrak{c}\|$.
\end{prop}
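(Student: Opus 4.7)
The plan is to carry out the straightforward weak-duality calculation that is familiar from finite LP duality, taking care only with the measure-theoretic bookkeeping. The idea is: on the support of $\mathfrak{t}$, the fractional cover inequality $\sum_i \mathfrak{c}(x_i) \ge 1$ holds almost everywhere, so we can insert this factor for free in the integral defining $\|\mathfrak{t}\|$; then Fubini redistributes the $\mathfrak{c}$-weight onto a single coordinate, and the tiling constraint~(\ref{eq:IntegralAtMostOne}) absorbs the $k$-fold sum.

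More concretely, I would first observe that by~(\ref{eq:tilingsupport}) $\SUPPORT \mathfrak{t} \subset \mathcal{F}_F(W)$, and by the definition of a fractional $F$-cover the set
\[
N = \mathcal{F}_F(W) \cap \Bigl\{ \mathbf{x} \in \Omega^k : \sum_{i=1}^{k} \mathfrak{c}(x_i) < 1 \Bigr\}
\]
is $\nu^k$-null. Hence for $\nu^k$-almost every $\mathbf{x}$ we have either $\mathfrak{t}(\mathbf{x}) = 0$ or $\sum_i \mathfrak{c}(x_i) \ge 1$, and therefore the pointwise inequality
\[
\mathfrak{t}(x_1,\ldots,x_k) \;\le\; \mathfrak{t}(x_1,\ldots,x_k) \cdot \sum_{i=1}^{k} \mathfrak{c}(x_i)
\]
holds $\nu^k$-almost everywhere (using that $\mathfrak{t} \ge 0$).

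Integrating this inequality over $\Omega^k$ and applying Fubini (both sides are nonnegative measurable, so this is unconditional) gives
\[
\|\mathfrak{t}\| \;\le\; \sum_{i=1}^{k} \int_{\Omega} \mathfrak{c}(x) \, g_i(x) \, \mathrm{d}\nu(x),
\]
where $g_i(x) = \int \mathfrak{t}(x_1,\ldots,x_{i-1},x,x_{i+1},\ldots,x_k)\,\mathrm{d}\nu^{k-1}$ is the $i$-th marginal integral appearing in~(\ref{eq:IntegralAtMostOne}). Interchanging the finite sum and the integral yields
\[
\|\mathfrak{t}\| \;\le\; \int_{\Omega} \mathfrak{c}(x) \Bigl( \sum_{i=1}^{k} g_i(x) \Bigr) \mathrm{d}\nu(x).
\]

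Finally, the tiling condition~(\ref{eq:IntegralAtMostOne}) says exactly that $\sum_{i=1}^k g_i(x) \le 1$ for every $x \in \Omega$, and since $\mathfrak{c} \ge 0$ this gives
\[
\|\mathfrak{t}\| \;\le\; \int_{\Omega} \mathfrak{c}(x)\,\mathrm{d}\nu(x) \;=\; \|\mathfrak{c}\|,
\]
which is the claimed inequality. There is no real obstacle here; the only subtlety worth highlighting is the use of the cover condition as an almost-everywhere statement on $\SUPPORT \mathfrak{t}$, which is legitimate because both $\SUPPORT \mathfrak{t} \setminus \mathcal{F}_F(W)$ and $N$ are null and $\mathfrak{t}$ changing on a null set does not affect $\|\mathfrak{t}\|$.
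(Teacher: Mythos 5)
Your proof is correct and is essentially the same argument as the paper's: restrict to the support of $\mathfrak{t}$, insert the cover factor $\sum_i \mathfrak{c}(x_i)\ge 1$ (valid a.e.\ by Definition~\ref{def:covergraphon}), apply Fubini to push the weight onto a single coordinate, and finish with the tiling constraint~(\ref{eq:IntegralAtMostOne}). You are if anything slightly more explicit than the paper about the null-set bookkeeping, but the route is identical.
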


\begin{proof}
We have 
\begin{eqnarray*}
\|\mathfrak{t}\| & =\\
\JUSTIFY{by~\eqref{eq:tilingsupport}} & = & \int_{\left(x_{1},\ldots,x_{k}\right)\in\mathcal{F}_{F}(W)}\mathfrak{t}\left(x_{1},\ldots,x_{k}\right)\diffr\nu^{k}\\
\JUSTIFY{Definition~\ref{def:covergraphon}} & \le & \int_{\left(x_{1},\ldots,x_{k}\right)\in\mathcal{F}_{F}(W)}\mathfrak{t}\left(x_{1},\ldots,x_{k}\right)\left(\sum_{i=1}^{k}\mathfrak{c}\left(x_{i}\right)\right)\diffr\nu^{k}\\
 & = & \int_{x\in\Omega}\left(\mathfrak{c}(x)\sum_{i=1}^{k}\int_{\substack{\left(x_{1},\ldots,,x_{i-1},x_{i+1},\ldots,x_{k}\right)\in\Omega^{k-1}\\
\left(x_{1},\ldots,x_{i-1},x,x_{i+1},\ldots,x_{k}\right)\in\mathcal{F}_{F}(W)
}
}\mathfrak{t}\left(x_{1},\ldots,x_{i-1},x,x_{i+1},\ldots,x_{k}\right)\right)\diffr\nu\\
\JUSTIFY{by~\eqref{eq:IntegralAtMostOne}} & \le & \int_{x\in\Omega}\mathfrak{c}(x)\diffr\nu\;,
\end{eqnarray*}
as was needed.
\end{proof}
\begin{prop}
\label{prop:LPdualityHard}Suppose that $W:\Omega^{2}\rightarrow[0,1]$
is a graphon defined on $\Omega$ and $F$ is a graph on the vertex
set $[k]$. Then for an arbitrary $\epsilon>0$ there exists an $F$-tiling
$\mathfrak{t}:\Omega^{k}\rightarrow[0,1]$ in $W$ with $\|\mathfrak{t}\|\ge\FCOV(F,W)-\epsilon$.
\end{prop}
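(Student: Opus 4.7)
My plan is to reduce to the classical finite LP duality via a step-graphon approximation of $W$, then pass back by a support-restriction argument. Since both $\TIL$ and $\FCOV$ depend only on $\SUPPORT W$ (see the remarks after Definitions~\ref{def:graphontiling} and~\ref{def:covergraphon}), I would first assume WLOG that $W$ is $\{0,1\}$-valued, so that $W^{\otimes F} = \mathbf{1}_{\mathcal{F}_F(W)}$. Write $\gamma := \FCOV(F, W)$ and fix a small parameter $\eta > 0$ to be pinned down in terms of $\epsilon$, $\gamma$, and $e(F)$.

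Applying the Lebesgue differentiation theorem to $\mathbf{1}_{\SUPPORT W}$, I would choose a partition $\Omega = P_1 \sqcup \cdots \sqcup P_m$ fine enough that the total $\nu^2$-measure of the ``intermediate'' cells $P_i \times P_j$ -- those whose $\SUPPORT W$-density $d_{ij}$ lies in $(\eta, 1-\eta)$ -- is at most $\eta$. Setting $W^\flat := \mathbf{1}_{\bigcup \{P_i \times P_j \,:\, d_{ij} \ge 1 - \eta\}}$ yields a $\{0,1\}$-valued step graphon with $\|W - W^\flat\|_\square \le \|W - W^\flat\|_1 = O(\eta)$ (the $L^1$ error coming from edges of $W$ inside low-density or intermediate cells, plus non-edges inside good cells).

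The graphon $W^\flat$ is encoded by a finite weighted graph $R$ on vertex set $[m]$ with vertex weights $\nu(P_i)$ and edge $ij$ iff $d_{ij} \ge 1 - \eta$. A standard averaging argument reduces $\TIL(F, W^\flat)$ and $\FCOV(F, W^\flat)$ to mutually dual finite linear programs on $R$, and the classical finite LP duality then gives $\TIL(F, W^\flat) = \FCOV(F, W^\flat)$, with both extrema attained. Combining with Corollary~\ref{cor:liminfCovers} (applied to the sequence obtained by letting $\eta \to 0$) yields $\FCOV(F, W^\flat) \ge \gamma - \epsilon/2$ for $\eta$ small enough, and hence an $F$-tiling $\mathfrak{t}^\flat$ in $W^\flat$ with $\|\mathfrak{t}^\flat\| \ge \gamma - \epsilon/2$, taken constant on each supporting rectangle $R_\mathbf{i} = P_{i_1} \times \cdots \times P_{i_k}$.

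To produce the desired $F$-tiling in $W$, I'd set $\mathfrak{t} := \mathfrak{t}^\flat \cdot \mathbf{1}_{\mathcal{F}_F(W)}$. Monotonicity of~\eqref{eq:IntegralAtMostOne} under pointwise decrease ensures $\mathfrak{t}$ is still a tiling, and now $\SUPPORT \mathfrak{t} \subset \mathcal{F}_F(W)$, so $\mathfrak{t}$ is an $F$-tiling in $W$. On each supporting rectangle (where all edge-pairs $(P_{i_r}, P_{i_s})$ for $rs \in E(F)$ satisfy $d_{i_r i_s} \ge 1 - \eta$) the union bound
\[
\nu^k(R_\mathbf{i} \setminus \mathcal{F}_F(W)) \;\le\; \sum_{rs \in E(F)} (1 - d_{i_r i_s})\cdot \nu^k(R_\mathbf{i}) \;\le\; e(F)\eta\cdot \nu^k(R_\mathbf{i})
\]
caps the restriction loss at an $e(F)\eta$-fraction of $\|\mathfrak{t}^\flat\|$, giving $\|\mathfrak{t}\| \ge (1 - e(F)\eta)(\gamma - \epsilon/2) \ge \gamma - \epsilon$ for $\eta$ small enough. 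The main obstacle is the third-paragraph reduction: one must verify carefully that the supremum defining $\TIL(F, W^\flat)$ (resp.\ the infimum for $\FCOV$) is realized by functions constant on each product cell (resp.\ each cell), since the constraint~\eqref{eq:IntegralAtMostOne} is enforced pointwise rather than in integrated form, and that the resulting finite LPs on the weighted graph $R$ encode these graphon parameters with the correct normalization.
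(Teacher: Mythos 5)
Your proposal is correct and takes essentially the same route as the paper: approximate $W$ by a step graphon $W'$ close in $\mathcal{L}^{1}$ (hence cut-norm), invoke Corollary~\ref{cor:liminfCovers} together with finite LP duality on the associated finite graph to produce a step $F$-tiling on $W'$, then trim this tiling to $\mathcal{F}_{F}(W)$ and bound the loss via a union bound over edge-rectangles. The differences are bookkeeping. The paper uses an \emph{equal-measure} partition and a two-stage modification of the step approximant (first truncating values below a threshold, then nullifying the ``shoddy'' rectangles on which $\SUPPORT W'_{n}\setminus\SUPPORT W$ has non-negligible relative measure), so that Proposition~\ref{prop:FCOVgraphFCOV} applies directly to an honest unweighted finite graph. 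Your WLOG reduction to a $\{0,1\}$-valued $W$ and your density threshold $d_{ij}\ge 1-\eta$ is a clean alternative realizing the same plan — indeed the shoddy-rectangle condition is precisely the paper's surrogate for your density condition — but because your partition has cells of unequal measure, the finite reduction runs through a vertex-weighted graph $R$, which is exactly the obstacle you flag at the end. That step does require an argument (a vertex-weighted analogue of Propositions~\ref{prop:FTILgraphTILgraphon} and~\ref{prop:FCOVgraphFCOV}, or equivalently that the pointwise constraint~\eqref{eq:IntegralAtMostOne} averages cleanly over rectangles). The simplest fix, which brings your proof essentially onto the paper's track, is to refine your partition to one into equal-measure cells; then the averaging you need is word-for-word Proposition~\ref{prop:FTILgraphTILgraphon}.
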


\begin{proof}
For each $n=1,2,\ldots$, we find a suitable $k_{n}\in\mathbb{N}$,
a partition $\Omega=\Omega_{1}^{(n)}\sqcup\Omega_{2}^{(n)}\sqcup\ldots\sqcup\Omega_{k_{n}}^{(n)}$
into $k_{n}$ sets of measure $\frac{1}{k_{n}}$ each, and a graphon
$W_{n}:\Omega^{2}\rightarrow[0,1]$ such that $W_{n}$ is constant
on each rectangle $\Omega_{i}^{(n)}\times\Omega_{j}^{(n)}$ ($i,j\in[k_{n}]$),
and $\|W-W_{n}\|_{1}<\nicefrac{1}{n}$. This is possible as step functions
with square steps are dense in $\mathcal{L}^{1}(\Omega^{2})$. We
shall now modify the graphons $W_{n}$ in two steps.

First, let $W_{n}':\Omega^{2}\rightarrow[0,1]$ be defined as 
\[
W_{n}'(x,y)=\begin{cases}
W_{n}(x,y) & \mbox{if \ensuremath{W_{n}(x,y)\ge\sqrt{\nicefrac{1}{n}}}}\\
0 & \mbox{otherwise}
\end{cases}\;.
\]
This way, we have that 
\begin{equation}
\|W_{n}-W_{n}'\|_{1}<\sqrt{\nicefrac{1}{n}}\;.\label{eq:WWone}
\end{equation}
For a fixed $n$, we say that a rectangle $\Omega_{i}^{(n)}\times\Omega_{j}^{(n)}$
is \emph{shoddy} if the measure of the set 
\[
\left(\SUPPORT W_{n}'\setminus\SUPPORT W\right)\cap\left(\Omega_{i}^{(n)}\times\Omega_{j}^{(n)}\right)
\]
is at least $\sqrt[4]{\nicefrac{1}{n}}\cdot\frac{1}{k_{n}^{2}}$.
Note that at each point of $\SUPPORT W_{n}'\setminus\SUPPORT W$ the
difference between $W_{n}$ and $W$ is at least $\sqrt{\nicefrac{1}{n}}$.
This gives that $\nu^{2}\left(\SUPPORT W_{n}'\setminus\SUPPORT W\right)\le\frac{\|W_{n}-W\|_{1}}{\sqrt{\nicefrac{1}{n}}}\le\sqrt{\nicefrac{1}{n}}$,
which in turn implies that at most $\sqrt[4]{\nicefrac{1}{n}}\cdot k_{n}^{2}$
rectangles $\Omega_{i}^{(n)}\times\Omega_{j}^{(n)}$ are shoddy. Let
us define 
\[
W_{n}''(x,y)=\begin{cases}
0 & \mbox{\ensuremath{(x,y)} lies in a shoddy rectangle}\\
W'_{n}(x,y) & \mbox{otherwise}
\end{cases}\;.
\]
Observe that shoddy rectangles are symmetric, so $W_{n}''$ is indeed
a graphon. We changed $W''_{n}$ on a set of measure at most $\sqrt[4]{\nicefrac{1}{n}}$
which gives that 
\begin{equation}
\|W_{n}'-W_{n}''\|_{1}\le\sqrt[4]{\nicefrac{1}{n}}\;.\label{eq:WoneWtwo}
\end{equation}

To summarize, we ended up with a graphon $W''_{n}$ which is a step-function
on the rectangles $\Omega_{i}^{(n)}\times\Omega_{j}^{(n)}$ and for
which we have
\[
\|W_{n}''-W\|_{1}\le\|W_{n}-W\|_{1}+\|W_{n}'-W_{n}\|_{1}+\|W''_{n}-W_{n}'\|_{1}\overset{\eqref{eq:WWone},\eqref{eq:WoneWtwo}}{<}\frac{1}{n}+\sqrt{\nicefrac{1}{n}}+\sqrt[4]{\nicefrac{1}{n}}\quad\overset{n\rightarrow\infty}{\;\longrightarrow\;}\quad0\;.
\]
Since the $\mathcal{L}^{1}(\Omega^{2})$-norm is stronger than the
cut-norm, we have that the graphons $(W_{n}'')$ converge to $W$.
In particular, Theorem~\ref{thm:limitOfCovers} tells us that $\liminf_{n}\FCOV(F,W_{n}'')\ge\FCOV(F,W)$.
Let $N_{1}$ be such that for each $n\ge N_{1}$ we have $\FCOV(F,W_{n}'')\ge\FCOV(F,W)-\nicefrac{\epsilon}{2}$.
Let $N_{2}=\lceil\nicefrac{k}{\epsilon}\rceil^{8}$. Now, fix an arbitrary
number $N\ge\max(N_{1},N_{2})$, and consider the graphon $W_{N}''$.
This graphon represents a finite graph on the vertex set $V=\{1,\ldots,k_{N}\}$.
We call this graph $H$. We forget the edge-weights, that is we put
an edge $ij$ in $H$ whenever $W_{N}''$ is positive on $\Omega_{i}^{(N)}\times\Omega_{j}^{(N)}$.
By Proposition~\ref{prop:FCOVgraphFCOV}, $\FCOV(F,H)=k_{N}\cdot\FCOV(F,W_{N}'')$.

The LP-duality of finite graphs tells us that we can find a fractional
$F$-tiling $\mathfrak{t}_{H}:V^{k}\rightarrow[0,1]$ on $H$ with
$\sum\mathfrak{t}_{H}(v_{1},\ldots,v_{k})=\FCOV(F,H)$. It is easy
to transform $\mathfrak{t}_{H}$ to a fractional $F$-tiling $\mathfrak{t}:\Omega^{k}\rightarrow[0,\infty)$
on $W_{N}''$ by defining $\mathfrak{t}$ to be the constant $k_{N}\cdot\mathfrak{t}_{H}(v_{1},\ldots,v_{k})$
on the entire rectangle $\prod_{i}\Omega_{v_{i}}^{(N)}$ (for each
$v_{1},v_{2},\ldots,v_{k}\in[k_{N}])$. Then $\mathfrak{t}$ has weight
$\frac{1}{k_{N}}\sum\mathfrak{t}_{H}(v_{1},\ldots,v_{k})=\FCOV(F,W_{N}'')$.
The function $\mathfrak{t}$ is not necessarily a fractional $F$-tiling
on $W$, as condition~(\ref{eq:tilingsupport}) may be violated due
to values at points $P=\SUPPORT\left(\left(W_{N}''\right)^{\otimes F}\right)\setminus\SUPPORT\left(W^{\otimes F}\right)$.
We have 
\begin{equation}
P\subset\bigcup_{i<j,ij\in E(F)}\pi_{ij}^{-1}\left(\SUPPORT W_{n}''\setminus\SUPPORT W\right)\;.\label{eq:Pij}
\end{equation}
Let $\mathfrak{t}'$ be zero on $P$, and equal to $\mathfrak{t}$
elsewhere. Then $\mathfrak{t}'$ is a fractional $F$-tiling on $W$. 

Since $\SUPPORT W_{n}''\setminus\SUPPORT W$ is disjoint from points
of shoddy rectangles, we have that for each choice of $v_{1},\ldots,v_{k}$,
the product set $\prod_{i=1}^{k}\Omega_{v_{i}}^{(N)}$ satisfies
\begin{eqnarray*}
\nu^{k}\left(\prod_{i=1}^{k}\Omega_{v_{i}}^{(N)}\cap P\right) & \overset{\eqref{eq:Pij}}{\le} & \sum_{\ell<j,ij\in E(F)}\nu^{k}\left(\prod_{i=1}^{k}\Omega_{v_{i}}^{(N)}\cap\pi_{\ell j}^{-1}\left(\SUPPORT W_{N}''\setminus\SUPPORT W\right)\right)\\
 & \le & \nu^{k}\left(\prod_{i=1}^{k}\Omega_{v_{i}}^{(N)}\right)\binom{k}{2}\sqrt[4]{\nicefrac{1}{N}}\;.
\end{eqnarray*}
Using that $\mathfrak{t}$ is constant on $\prod_{i=1}^{k}\Omega_{v_{i}}^{(N)}$,
we get that 
\[
\int_{(x_{1},\ldots,x_{k})\in\prod_{i=1}^{k}\Omega_{v_{i}}^{(N)}}\mathfrak{t}'(x_{1},\ldots,x_{k})\diffr\nu^{k}\ge\left(1-\tbinom{k}{2}\sqrt[4]{\nicefrac{1}{N}}\right)\int_{(x_{1},\ldots,x_{k})\in\prod_{i=1}^{k}\Omega_{v_{i}}}\mathfrak{t}(x_{1},\ldots,x_{k})\diffr\nu^{k}\;.
\]
Summing over all product sets $\prod_{i=1}^{k}\Omega_{v_{i}}^{(N)}$
we get that $\mathfrak{t}'$ is a fractional $F$-tiling on $W$ with
\begin{align*}
\int_{(x_{1},\ldots,x_{k})\in\Omega^{k}}\mathfrak{t}'(x_{1},\ldots,x_{k})\diffr\nu^{k} & \ge\left(1-\tbinom{k}{2}\sqrt[4]{\nicefrac{1}{N}}\right)\int_{(x_{1},\ldots,x_{k})\in\Omega^{k}}\mathfrak{t}(x_{1},\ldots,x_{k})\diffr\nu^{k}\\
 & \ge\int_{(x_{1},\ldots,x_{k})\in\Omega^{k}}\mathfrak{t}(x_{1},\ldots,x_{k})\diffr\nu^{k}-\frac{\epsilon}{2}\\
 & =\FCOV(F,W_{N}'')-\frac{\epsilon}{2}\ge\FCOV(F,W)-\epsilon\;,
\end{align*}
as was needed.
\end{proof}

\subsection{Proof of Theorem~\ref{thm:lowersemicont}\label{subsec:ProofTheoremLowesemicontinuous}}

By Theorem~\ref{thm:LPdualityGraphons} it suffices to prove that
$\liminf_{n}\FCOV(F,W_{n})\ge\FCOV(F,W)$. This, however, is the subject
of Corollary~\ref{cor:liminfCovers}.

\subsection{Proof of Theorem~\ref{thm:lowersemicontgraphs}\label{subsec:ProofTheoremLowesemicontinuousGraphs}}

For simplicity, let us assume that $v(G_{n})=n$. Suppose that $\epsilon>0$
is arbitrary. We want to show that for $n$ sufficiently large, $\TIL(F,G_{n})\ge(\TIL(F,W)-\epsilon)n=(\FCOV(F,W)-\epsilon)n$
(where the last equality uses Theorem~\ref{thm:LPdualityGraphons}).
Let $\delta>0$ be such that whenever $U$ is a graphon of cut-distance
at most $\delta$ from $W$ we have that 
\begin{equation}
\FCOV(F,U)\ge\FCOV(F,W)-\nicefrac{\epsilon}{3}\;.\label{eq:FCovbig}
\end{equation}
Such a number $\delta$ exists by Corollary~\ref{cor:liminfCovers}.
Let $d=\nicefrac{\delta}{10}$. Let $\epsilon_{1}$ be given by Lemma~\ref{lem:weakblowup}
for the error parameter $\gamma=\nicefrac{\epsilon}{10}$ and density
$\nicefrac{d}{2}$. Let $\epsilon_{0}$ be given by Lemma~\ref{lem:randomsubpair}
for the input parameter $\beta=0.1$ and error parameter $\epsilon_{1}$.
Set $\epsilon_{2}=\min(\epsilon_{0},\nicefrac{\delta}{10})$. Lemma~\ref{lem:RegularityLemma}
with error parameter $\epsilon_{2}$ gives us a bound $L$ on the
complexity of partitions.

Suppose $n$ is large enough, so that we have
\begin{equation}
\DIST_{\square}(W_{G_{n}},W)<\nicefrac{\delta}{2}\;.\label{eq:distWgnW}
\end{equation}

Consider a cluster graph $R_{n}$ corresponding to a $(\epsilon_{2},d)$-regularization
of the graph $G_{n}$. Let $\mathcal{V}=(V_{1},\ldots,V_{\ell})$
be the corresponding partition of $V(G_{n})$ into clusters, with
$\ell\le L$. Let $m=|V_{1}|$. By~(\ref{eq:distGraphClustergraph})
we have $\DIST_{\square}(W_{G_{n}},W_{R_{n}})<\nicefrac{\delta}{2}$.
Combining with~(\ref{eq:distWgnW}), triangle inequality gives $\DIST_{\square}(W,W_{R_{n}})<\delta$.
In particular, (\ref{eq:FCovbig}) can be applied to $W_{R_{n}}$.
Proposition~\ref{prop:FCOVgraphFCOV} then gives that $\FCOV(F,R_{n})=\ell\cdot\FCOV(F,W_{R_{n}})\ge\ell\left(\FCOV(F,W)-\nicefrac{\epsilon}{3}\right)$.
We ignore the edge weights in $R_{n}$. We apply the (standard) LP
duality on $R_{n}$. We get a fractional $F$-tiling $\mathfrak{t}_{R_{n}}:[\ell]^{k}\rightarrow[0,1]$.
We now delete from $\mathfrak{t}_{R_{n}}$ very small weights. Formally,
for a $k$-tuple $\mathbf{v}\in[\ell]^{k}$ we define $\mathfrak{t}'_{R_{n}}(\mathbf{v})$
to be~0 if $\mathfrak{t}{}_{R_{n}}(\mathbf{v})<\nicefrac{\epsilon}{\left(10\ell^{k-1}\right)}$
and to be $\mathfrak{t}_{R_{n}}(\mathbf{v})$ otherwise. Observe that
$\|\mathfrak{t}'{}_{R_{n}}\|\ge\|\mathfrak{t}{}_{R_{n}}\|-\nicefrac{\epsilon\ell}{10}$.
For each copy $\tilde{F}$ of the graph $F$ inside $R_{n}$ which
is in the support of $\mathfrak{t}'{}_{R_{n}}$, consider an arbitrary
integer $m_{\tilde{F}}$ in the range between $\left[\left(1-\nicefrac{\epsilon}{10}\right)\cdot\mathfrak{t}'{}_{R_{n}}(\tilde{F})\cdot m,\mathfrak{t}'{}_{R_{n}}(\tilde{F})\cdot m\right]$.
Since we can assume that $n$ is sufficiently large, for the gap between
the endpoints of this range we have
\[
\frac{\epsilon}{10}\cdot\mathfrak{t}'{}_{R_{n}}(\tilde{F})\cdot m\ge\frac{\epsilon}{10}\cdot\frac{\epsilon}{10\ell^{k-1}}\cdot\frac{(1-\epsilon_{2})n}{\ell}\ge1\;.
\]
In particular, we can indeed find an integer $m_{\tilde{F}}$ in the
specified range.

Now, for each cluster $V_{i}$, we construct a partition of $V_{i}$
into sets $V_{i,0}\sqcup\bigsqcup V_{i,\tilde{F}}$ where $\tilde{F}$
runs over all copies of $F$ in $R_{n}$ that touch the vertex $i$.
In this partition, the size of each set $V_{i,\tilde{F}}$ is $m_{\tilde{F}}$,
and $V_{i,0}$ is the remainder. Since $\mathfrak{t}'_{R_{n}}$ is
an $F$-tiling $_{\tilde{F}:\tilde{F}\ni i}m_{\tilde{F}}\le\sum_{\tilde{F}:\tilde{F}\ni i}\mathfrak{t}'{}_{R_{n}}(\tilde{F})\cdot m\le m$.
That means that at least one such partition exists. Among all such
partitions $V_{i,0}\sqcup\bigsqcup V_{i,\tilde{F}}$, consider a random
one. 

Suppose that this way we randomly partitioned all the clusters $V_{1},\ldots,V_{\ell}$.
For a given copy $\tilde{F}$ of $F$ in $R_{n}$ in the support of
$\mathfrak{t}'_{R_{n}}$, we set up a random variable $X_{\tilde{F}}$
as follows. If for each edge $ij\in\tilde{F}$ the pair $(V_{i,\tilde{F}},V_{j,\tilde{F}})$
is $\epsilon_{1}$-regular, we set $X_{\tilde{F}}=(1-\nicefrac{\epsilon}{10})m_{\tilde{F}}$.
If there exists an edge $ij\in\tilde{F}$ for which the pair $(V_{i,\tilde{F}},V_{j,\tilde{F}})$
fails to be $\epsilon_{1}$-regular, we set $X_{\tilde{F}}=0$. Lemma~\ref{lem:weakblowup}
gives that one can always find an $F$-tiling of size $X_{\tilde{F}}$
in $G_{n}\left[\bigcup_{i\in V(\tilde{F})}V_{i,\tilde{F}}\right]$.

Lemma~\ref{lem:randomsubpair} implies that $\EXP\left[X_{\tilde{F}}\right]\ge\exp\left(-2e(F)0.1^{m_{\tilde{F}}}\right)(1-\nicefrac{\epsilon}{10})m_{\tilde{F}}\ge(1-\nicefrac{\epsilon}{5})m_{\tilde{F}}$.
By linearity of expectation, 
\begin{eqnarray*}
\EXP\left[\sum_{\tilde{F}}X_{\tilde{F}}\right] & \ge & (1-\nicefrac{\epsilon}{5})\sum_{\tilde{F}}\left(1-\nicefrac{\epsilon}{10}\right)\cdot\mathfrak{t}'{}_{R_{n}}(\tilde{F})\cdot m\ge(1-\nicefrac{\epsilon}{3})\|\mathfrak{t}'_{R_{n}}\|m\\
 & \ge & (1-\nicefrac{\epsilon}{3})\|\mathfrak{t}{}_{R_{n}}\|m-\nicefrac{\epsilon n}{10}\;.
\end{eqnarray*}
So, let us fix one collection of partitions such that $\sum_{\tilde{F}}X_{\tilde{F}}\ge(1-\nicefrac{\epsilon}{3})\|\mathfrak{t}{}_{R_{n}}\|m-\nicefrac{\epsilon n}{10}$.
Then by the above there is an $F$-tiling in $G_{n}$ of size $\sum_{\tilde{F}}X_{\tilde{F}}$.
Using the fact that $\FCOV(F,R_{n})\ge\ell\left(\FCOV(F,W)-\nicefrac{\epsilon}{3}\right)$,
we obtain that $\TIL(F,G_{n})\ge(\FCOV(F,W)-\epsilon)n$, as needed.

\subsection{Proof of Proposition~\ref{prop:uppercont}\label{subsec:ProofPropUpperCont}}

Write $k=v(F)$. Given $\eta>0$, let the number $\beta$ be given
by the Removal lemma~\ref{lem:removalpartiteNullify} for the graph
$F$ and the parameters $\alpha=\left(\nicefrac{\eta}{2}\right)^{k+2}$
and $d=\eta/2$. Set $\delta=\beta(\eta/2)^{4k^{2}}/k^{2}$. 

Suppose now that $U$ is $\delta$-close to $W$ in the cut-norm.
Let us fix an optimal fractional $F$-cover~$\mathfrak{c}$ of $W$
(recall Theorem~\ref{thm:LPdualityGraphons} and (\ref{eq:fcovattained})).
Let $\widehat{U}$ and $\mathfrak{c}^{'}$ be defined by
\begin{eqnarray*}
\widehat{U}(x,y) & = & \begin{cases}
U(x,y) & \mbox{if \ensuremath{U(x,y)>\eta/2}}\\
0 & \mbox{if \ensuremath{U(x,y)\le\eta}}/2
\end{cases}\;,\\
\mathfrak{c}'(x) & = & \max\left(\mathfrak{c}(x)+\eta,1\right)\;.
\end{eqnarray*}
Obviously, $\|U-\widehat{U}\|_{1}\le\eta/2$ and $\|\mathfrak{c}'\|\le\|\mathfrak{c}\|+\eta$.
Let $\ell=\lceil\eta^{-1}\rceil$. Let us consider the sets $A_{i}$,
$i=1,\ldots,\ell$ defined by $A_{i}=\mathfrak{c}^{-1}[0,\nicefrac{i}{\ell})$.
Let $X\subset\Omega^{k}$ be the set of points $(x_{1},\ldots,x_{k})$
where $\widehat{U}{}^{\otimes F}(x_{1},\ldots,x_{k})>0$ but $\sum_{i}\mathfrak{c}'(x_{i})<1$.
Let $\mathcal{I}\subset\mathbb{N}^{k}$ be the set of all $k$-tuples
of integers whose sum is $\ell$. We have $|\mathcal{I}|\le\left(\frac{2}{\eta}\right)^{k}$.
Observe that 
\begin{equation}
X\subset\bigcup_{\mathbf{i}\in\mathcal{I}}A_{\mathbf{i}_{1}}\times A_{\mathbf{i}_{2}}\times\cdots\times A_{\mathbf{i}_{k}}\;.\label{eq:Xcontained}
\end{equation}
The next claim is crucial.
\begin{claim*}
For each $k$-tuple $\mathbf{i}\in\mathcal{I}$, we have for $\mathbf{A}=A_{\mathbf{i}_{1}}\times A_{\mathbf{i}_{2}}\times\cdots\times A_{\mathbf{i}_{k}}$
that $\nu^{k}(\mathbf{A}\cap X)\le\beta$.
\end{claim*}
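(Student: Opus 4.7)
The plan is to exploit three facts in sequence: that $\mathfrak{c}$ being an optimal fractional $F$-cover of $W$ forces $W^{\otimes F}$ to vanish on $\mathbf{A}$; that cut-norm closeness transfers this vanishing (in an approximate sense) to $U^{\otimes F}$ and hence to $\widehat{U}^{\otimes F}$; and finally that the uniform lower bound $\eta/2$ on positive values of $\widehat{U}$ converts an integral bound into a measure bound on the support.

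First I would observe that for every $\mathbf{x}\in\mathbf{A}$ and every coordinate $j$, the membership $x_j\in A_{\mathbf{i}_j}=\mathfrak{c}^{-1}[0,\mathbf{i}_j/\ell)$ gives $\mathfrak{c}(x_j)<\mathbf{i}_j/\ell$, so $\sum_j\mathfrak{c}(x_j)<\sum_j\mathbf{i}_j/\ell=1$. Since $\mathfrak{c}$ is a fractional $F$-cover of $W$, Definition~\ref{def:covergraphon} then forces $W^{\otimes F}=0$ almost everywhere on $\mathbf{A}$, and in particular $\int_{\mathbf{A}}W^{\otimes F}=0$.

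Next, since $\mathbf{A}$ is a product of measurable sets and $\|W-U\|_{\square}<\delta$, Lemma~\ref{lem:cutdistVSsubgraphCount} yields
\[
\int_{\mathbf{A}}U^{\otimes F}\le\binom{k}{2}\delta,
\]
and from $\widehat{U}\le U$ pointwise we deduce the same bound for $\widehat{U}^{\otimes F}$. Now, on $\mathbf{A}\cap X$ we have $\widehat{U}^{\otimes F}(\mathbf{x})>0$, but since $\widehat{U}$ takes values in $\{0\}\cup[\eta/2,1]$, a positive value of the product $\widehat{U}^{\otimes F}$ must be at least $(\eta/2)^{e(F)}$. Combining these two facts gives
\[
(\eta/2)^{e(F)}\cdot\nu^{k}(\mathbf{A}\cap X)\le\int_{\mathbf{A}}\widehat{U}^{\otimes F}\le\binom{k}{2}\delta.
\]

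Finally I would verify the constants: with $\delta=\beta(\eta/2)^{4k^{2}}/k^{2}$ and $e(F)\le\binom{k}{2}\le k^{2}$, the right-hand side is at most
\[
\frac{\binom{k}{2}}{k^{2}}\cdot\beta\cdot(\eta/2)^{4k^{2}-e(F)}\le\beta,
\]
since $\eta/2\le 1$ and $4k^{2}-e(F)\ge 0$. This delivers $\nu^{k}(\mathbf{A}\cap X)\le\beta$ as claimed. I do not anticipate a genuine obstacle here; the only thing to watch is that the cut-norm bound from Lemma~\ref{lem:cutdistVSsubgraphCount} applies because $\mathbf{A}$ is a genuine product of sets in $\Omega^{k}$ (which is automatic from its definition), and that the thresholded graphon $\widehat{U}$ really takes values in $\{0\}\cup[\eta/2,1]$, which is built into its definition.
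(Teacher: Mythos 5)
Your proof is correct and follows the same approach as the paper: it uses the cover property of $\mathfrak{c}$ to force $\int_{\mathbf{A}}W^{\otimes F}=0$, transfers this via Lemma~\ref{lem:cutdistVSsubgraphCount} to bound $\int_{\mathbf{A}}\widehat{U}^{\otimes F}$, and converts that into a measure bound using the $\eta/2$ threshold on $\widehat{U}$. The only difference is that you argue directly while the paper argues by contradiction, which is a purely cosmetic distinction.
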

\begin{proof}
Suppose for a contradiction that $\nu^{k}(\mathbf{A}\cap X)>\beta$.
Therefore, we have
\begin{equation}
\int_{(x_{1},\ldots,x_{k})\in\mathbf{A}}U{}^{\otimes F}(x_{1},\ldots,x_{k})\diffr\nu^{k}\ge\int_{(x_{1},\ldots,x_{k})\in\mathbf{A}}\widehat{U}{}^{\otimes F}(x_{1},\ldots,x_{k})\diffr\nu^{k}\ge\left(\nicefrac{\eta}{2}\right)^{e(F)}\cdot\beta>k^{2}\delta\;.\label{eq:IntegralUdash-1}
\end{equation}

On the other hand, for each $k$-tuple $\mathbf{x}\in\mathbf{A}$
we have $\sum_{j}\mathfrak{c}(\mathbf{x}_{j})<1$. Since $\mathfrak{c}$
is a fractional $F$-cover of $W$, we conclude that $W^{F}(\mathbf{x})=0$.
Therefore,
\begin{equation}
\int_{(x_{1},\ldots,x_{k})\in\mathbf{A}}W{}^{\otimes F}(x_{1},\ldots,x_{k})\diffr\nu^{k}=0\;.\label{eq:IntegralW-1}
\end{equation}

Combining~\ref{eq:IntegralUdash-1} and~\ref{eq:IntegralW-1} with
Lemma~\ref{lem:cutdistVSsubgraphCount} we get that $\|U-W\|_{\square}>\delta$,
which is a contradiction.
\end{proof}
We shall use tools introduced in Section~\ref{subsec:PartiteGraphons}.
For each $\mathbf{i}\in\mathcal{I}$, let $U_{\mathbf{i}}$ be the
$\left(A_{\mathbf{i}_{1}},\ldots,A_{k}\right)$-partite version of
$\widehat{U}$ defined on an auxiliary measure space $\Omega_{\mathbf{i}}=A_{\mathbf{i}_{1}}\amalg\ldots\amalg A_{\mathbf{i}_{k}}$
with a measure $\nu_{\Omega_{\mathbf{i}}}$. The above Claim tells
us that $\int_{(x_{1},\ldots,x_{k})\in\left(\Omega_{\mathbf{i}}\right)^{k}}\widehat{U_{\mathbf{i}}}^{\otimes F}\diffr\left(\nu_{\Omega_{\mathbf{i}}}\right)\le\beta$.
The Removal Lemma (Lemma~\ref{lem:removalpartiteNullify}) tells
us that there exists a set $S_{\mathbf{i}}\subset\Omega_{\mathbf{i}}^{2}$
of $\Omega_{\mathbf{i}}^{2}$-measure at most $\alpha$ such that
nullifying $\widehat{U_{\mathbf{i}}}$ on $S_{\mathbf{i}}$ yields
an $F$-free graphon. Let $B_{\mathbf{i}}\subset\Omega^{2}$ be the
folded version of $S_{\mathbf{i}}$. We have that $\nu^{2}(B_{\mathbf{i}})\le\alpha$
by~(\ref{eq:projected}). Let us now nullify $\widehat{U}$ on $B=\bigcup_{\mathbf{i}\in\mathcal{I}}B_{\mathbf{i}}$.
The set $B$ has $\nu^{2}$-measure at most $|\mathcal{I}|\cdot\alpha\le\frac{2}{\eta^{k}}\cdot\alpha\le\nicefrac{\eta}{2}$,
and thus the resulting graphon $U^{*}$ satisfies $\|U-U^{*}\|_{1}\le\eta$.
The nullification together with~(\ref{eq:Xcontained}) tells us that
$\mathfrak{c}'$ is a fractional $F$-cover of $U^{*}$. This finishes
the proof.

\subsection{Proof of Theorem~\ref{thm:robusttilingnumber} \label{subsec:ProofRobustTiling}}

Suppose that a sequence of graphons $\left(W_{n}\right)_{n}$ converges
to a graphon $W$ in the cut-norm. We shall prove the statement in
two steps:
\begin{eqnarray}
\limsup_{n}\TIL_{\epsilon}(F,W_{n}) & \ge & \TIL_{\epsilon}(F,W)\;,\mbox{and}\label{eq:ROBUSTge}\\
\limsup_{n}\TIL_{\epsilon}(F,W_{n}) & \le & \TIL_{\epsilon}(F,W)\;.\label{eq:ROBUSTle}
\end{eqnarray}
(Note that by passing to a subsequence, we could turn the limit superior
into a limit.)

First, let us prove~(\ref{eq:ROBUSTge}). For each $n$, suppose
that $U_{n}\le W_{n}$ is an arbitrary graphon of $\mathcal{L}^{1}(\Omega^{2})$-norm
at most $\epsilon$. Since the space of graphons is sequentially compact,
let us consider the limit $U$ of a suitable subsequence $\left(U_{n_{i}}\right)_{i}$.
Since $U_{n_{i}}\le W_{n_{i}}$ for each $i$, we also have $U\le W$.
Furthermore, since the $\mathcal{L}^{1}(\Omega^{2})$-norm is continuous
with respect to the cut-distance, we have that the $\mathcal{L}^{1}(\Omega^{2})$-norm
of $U$ is at most $\epsilon$. In particular, $U$ appears in the
infimum in~(\ref{eq:defTILrobust}) for the graphon $W$. We then
have 
\[
\limsup_{n}\TIL(F,W_{n}-U_{n})\ge\liminf_{i}\TIL(F,W_{n_{i}}-U_{n_{i}})\;\overset{T\ref{thm:lowersemicont}}{\ge}\;\TIL(F,W-U)\ge\TIL_{\epsilon}(F,W)\;,
\]
as was needed for~(\ref{eq:ROBUSTge}).

For~(\ref{eq:ROBUSTle}) we shall need that the function $\TIL_{\delta}(F,W)$,
considered as a function in $\delta\in(0,1)$ is left-continuous. 
\begin{claim*}
Suppose that $\delta\in(0,1)$. Then in the setting above, for each
$\eta>0$ and each graphon $U\le W$ with $\|U\|_{1}=\delta$ there
exists a graphon $U'\le U$ with $\|U'\|_{1}<\delta$ such that $\TIL(F,W-U)\le\TIL(F,W-U')+\eta$.
\end{claim*}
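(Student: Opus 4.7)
The plan is to verify the claim in two elementary steps. In step~1 I observe that the graphon tiling number $\TIL(F, \cdot)$ is monotone non-decreasing under pointwise ordering of graphons, which already yields the stronger bound $\TIL(F, W-U) \le \TIL(F, W-U')$; in particular, the $\eta$ slack in the statement of the claim is never actually consumed. In step~2 I construct an explicit witness $U' \le U$ with $\|U'\|_1 < \delta$. The role of the claim in the surrounding proof is precisely to move $U$ strictly inside the open $\mathcal{L}^1$-ball of radius $\delta$, so that Lemma~\ref{lem:subgraphonscoverge} can subsequently be invoked to approximate $U'$ by graphons $U'_n \le W_n$ whose $\mathcal{L}^1$-norms remain at most $\delta$ for all large $n$.

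For step~1, let $V_1, V_2 : \Omega^2 \to [0,1]$ be two graphons with $V_1 \le V_2$. Then $\SUPPORT V_1 \subseteq \SUPPORT V_2$, and so
\[
\mathcal{F}_F(V_1) \;=\; \SUPPORT V_1^{\otimes F} \;\subseteq\; \SUPPORT V_2^{\otimes F} \;=\; \mathcal{F}_F(V_2).
\]
By Definition~\ref{def:graphontiling}, an $F$-tiling is a nonnegative measurable function on $\Omega^k$ whose support lies in $\mathcal{F}_F(\cdot)$ and which satisfies the per-vertex integral bound \eqref{eq:IntegralAtMostOne}; the latter condition does not refer to the underlying graphon at all. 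Consequently every $F$-tiling of $V_1$ is automatically an $F$-tiling of $V_2$, giving $\TIL(F, V_1) \le \TIL(F, V_2)$. Applying this with $V_1 := W - U$ and $V_2 := W - U'$ (both are nonnegative since $U, U' \le W$, and $V_1 \le V_2$ by the hypothesis $U' \le U$) proves the tiling inequality of the claim with the $\eta$ to spare.

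For step~2, fix any $t \in (0, 1)$ and define $U' := (U - t)^{+}$, that is, $U'(x, y) := \max\bigl(U(x, y) - t,\, 0\bigr)$. Then $U'$ is symmetric and measurable with $0 \le U' \le U \le W$. Moreover, $(U - t)^{+} < U$ pointwise on the set $\{U > 0\}$ (strict because on $\{0 < U \le t\}$ one has $U' = 0 < U$ while on $\{U > t\}$ one has $U' = U - t < U$), and $\{U > 0\}$ has positive $\nu^2$-measure since $\|U\|_1 = \delta > 0$. Integrating yields
\[
\|U'\|_1 \;=\; \int_{\Omega^2} (U - t)^{+} \;<\; \int_{\Omega^2} U \;=\; \delta,
\]
so $\|U'\|_1 < \delta$ as required. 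Combined with step~1, this completes the proof of the claim. I foresee no substantive obstacle: the content is purely a monotonicity observation together with a one-parameter truncation of $U$.
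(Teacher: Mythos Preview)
Your argument is valid for the inequality \emph{as printed}, but that inequality is a typo: as you yourself notice, $U'\le U$ forces $W-U\le W-U'$ and hence $\TIL(F,W-U)\le\TIL(F,W-U')$ by monotonicity, so the $\eta$ is never used. That the slack is never consumed should have been a red flag. The sentence preceding the claim says it is meant to give left-continuity of $\delta\mapsto\TIL_\delta(F,W)$, and for that one needs the \emph{reverse} bound
\[
\TIL(F,W-U')\;\le\;\TIL(F,W-U)+\eta,
\]
which is exactly what the paper's own argument establishes (its concluding line carries the same transposition of $U$ and $U'$ as the claim statement). Without this direction, passing to $U'$ could raise $\TIL(F,W-U')$ far above the near-infimum value $\TIL(F,W-U)$, and the reduction to~\eqref{eq:ROBUSTreduced} collapses.

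Your truncation $U'=(U-t)^{+}$ does not deliver the correct inequality. Take $W\equiv\tfrac12$ and $U\equiv\tfrac12$, so $\delta=\tfrac12$ and $\TIL(F,W-U)=0$; yet $W-U'\equiv t>0$ everywhere, whence $\TIL(F,W-U')=\tfrac{1}{k}$ no matter how small $t$ is. The paper instead nullifies $U$ on a thin strip $(A\times\Omega)\cup(\Omega\times A)$ with $\nu(A)\le\eta$. Then $W-U'$ and $W-U$ agree off the strip, so restricting any $F$-tiling $\mathfrak t$ of $W-U'$ to tuples with no coordinate in $A$ yields an $F$-tiling $\mathfrak t^{*}$ of $W-U$, and~\eqref{eq:IntegralAtMostOne} bounds the lost weight by $\nu(A)\le\eta$. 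Your global shave $(U-t)^{+}$ can uncover new support of $W-U'$ on a set of full measure, which is precisely the wrong kind of perturbation here.
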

\begin{proof}[Proof of Claim]
 Clearly, there exists a set $A\subset\Omega$ of measure at most
$\eta$ such that $U_{\restriction A\times\Omega}$ is positive on
a set of positive measure. Let us nullify $U$ on $(A\times\Omega)\cup(\Omega\times A)$.
For the resulting graphon $U'$ we have $\|U'\|_{1}<\delta$. 

Suppose now that $\mathfrak{t}$ is an arbitrary $F$-tiling in $W-U'$.
By nullifying $\mathfrak{t}$ on those $v(F)$-tuples whose at least
one coordinate lies in $A$, we obtain an $F$-tiling $\mathfrak{t}^{*}$
in $W-U$. By~(\ref{eq:IntegralAtMostOne}), we have $\|\mathfrak{t}\|\le\|\mathfrak{t}^{*}\|+\eta$.
We conclude that $\TIL(F,W-U)\le\TIL(F,W-U')+\eta$.
\end{proof}
By the above claim, it suffices to prove the following weaker form
of~(\ref{eq:ROBUSTle}):
\begin{equation}
\limsup_{n}\TIL_{\epsilon}(F,W_{n})\le\TIL_{\epsilon-\nicefrac{1}{\ell}}(F,W)+\nicefrac{1}{\ell}\;,\mbox{for each \ensuremath{\ell\in\mathbb{N}}.}\label{eq:ROBUSTreduced}
\end{equation}
So, suppose that $\ell\in\mathbb{N}$ is arbitrary. Let $U\le W$
be an arbitrary graphon of $\mathcal{L}^{1}(\Omega^{2})$-norm at
most $\epsilon-\nicefrac{1}{\ell}$. Let $\mathfrak{c}:\Omega\rightarrow[0,1]$
be an arbitrary fractional $F$-cover of $W-U$ of size $\TIL(F,W-U)$.
Such a cover exists by Theorem~\ref{thm:LPdualityGraphons}. Let
us consider the sets $A_{i}$, $i=1,\ldots,\ell-1$ defined by $A_{i}=\mathfrak{c}^{-1}[\nicefrac{\left(i-1\right)}{\ell},\nicefrac{i}{\ell})$
and $A_{\ell}=\mathfrak{c}^{-1}[\nicefrac{\left(\ell-1\right)}{\ell},1]$.
We therefore have another fractional $F$-cover $\mathfrak{c}^{*}=\sum_{i}\frac{i}{\ell}\mathbf{1}_{A_{i}}$
of size at most $\|\mathfrak{c}\|+\nicefrac{1}{\ell}$. 

For each $n$, let us take a graphon $U_{n}\le W_{n}$ so that the
sequence $\left(U_{n}\right)_{n}$ converges to $U$ in the cut-norm.
Such a sequence $\left(U_{n}\right)_{n}$ exists by Lemma~\ref{lem:subgraphonscoverge}.
Since the $\mathcal{L}^{1}(\Omega^{2})$-norm of $U$ is at most $\epsilon-\nicefrac{1}{\ell}$
and since the $\mathcal{L}^{1}(\Omega^{2})$ topology is stronger
than the cut-norm topology, we can additionally assume that the $\mathcal{L}^{1}(\Omega^{2})$-norm
of each $U_{n}$ is at most $\epsilon-\nicefrac{1}{\ell}$. By the
same argument as in the proof of Proposition~\ref{prop:uppercont},
for large enough $n$, the density of copies of $F$ in $W_{n}-U_{n}$
not covered by the $\mathfrak{c}^{*}$ is $o_{n}(1)$. Using the Removal
lemma, for large enough $n$, we can find graphons $U_{n}'\le W_{n}-U_{n}$
so that $U_{n}'$ has $\mathcal{L}^{1}(\Omega^{2})$-norm at most
$\nicefrac{1}{2\ell}$ and that $W_{n}-(U_{n}+U_{n}')$ has zero density
of copies of $F$ in not covered by the $\mathfrak{c}^{*}$. Since
$U_{n}+U_{n}'$ has $\mathcal{L}^{1}(\Omega^{2})$-norm at most $\epsilon$,
we get that 
\[
\TIL_{\epsilon}(F,W_{n})\le\|\mathfrak{c}^{*}\|\le\|\mathfrak{c}\|+\nicefrac{1}{\ell}\le\TIL(F,W-U)+\nicefrac{1}{\ell}\;,
\]
as was needed for~(\ref{eq:ROBUSTreduced}).

\section{Tilings in inhomogeneous random graphs\label{sec:RandomGraphs}}

In this section we give a simple application of our theory. It concerns
the random graph model $\mathbb{G}(n,W)$ which was introduced by
Lovász and Szegedy in~\cite{Lovasz2006}. Let us briefly recall the
model. If $W:\Omega^{2}\rightarrow[0,1]$ is a graphon, then to sample
a graph from the distribution $\mathbb{G}(n,W)$, $G\sim\mathbb{G}(n,W)$,
we take deterministically $V(G)=[n]$. Further, we sample points $x_{1},\ldots,x_{n}\in\Omega$
independently at random according to the law $\nu$. To define the
edges of $G$, we include each pair $ij$ as an edge in $G$ with
probability $W(x_{i},x_{j})$, independently of the other choices.
When $W$ is constant $p$, $\mathbb{G}(n,W)$ is the usual Erd\H{o}s\textendash Rényi
random graph $\mathbb{G}(n,p)$. See~\cite[Chapter 10]{Lovasz2012}
for more properties of the model $\mathbb{G}(n,W)$. 

We prove that the ratios $\frac{\TIL\left(F,\mathbb{G}(n,W)\right)}{n}$
and $\frac{\FTIL\left(F,\mathbb{G}(n,W)\right)}{n}$ converge to $\TIL(F,W)$
asymptotically almost surely. The proof of this statement is a short
application of Theorem~\ref{thm:LPdualityGraphons}.
\begin{thm}
\label{thm:InhomogeneousRandomGraph}Suppose that $W:\Omega^{2}\rightarrow[0,1]$
is a graphon. Then the values $\frac{\TIL\left(F,\mathbb{G}(n,W)\right)}{n}$
and $\frac{\FTIL\left(F,\mathbb{G}(n,W)\right)}{n}$ converge in probability
to the constant $\TIL(F,W)$.
\end{thm}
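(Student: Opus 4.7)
The plan is to combine Theorem~\ref{thm:LPdualityGraphons} with the strong law of large numbers. Since $\TIL(F,G)\le\FTIL(F,G)$ for every finite graph $G$, the statement reduces to showing, in probability, the two one-sided bounds
\[\liminf_n \frac{\TIL(F,\mathbb{G}(n,W))}{n} \ge \TIL(F,W)\quad\text{and}\quad\limsup_n \frac{\FTIL(F,\mathbb{G}(n,W))}{n} \le \TIL(F,W).\]
Throughout the proof I would use the standard fact that the $W$-random graphs $\mathbb{G}(n,W)$ converge to $W$ in the cut-distance almost surely (see e.g.~\cite{Lovasz2012}).

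The lower bound is then immediate: Theorem~\ref{thm:lowersemicontgraphs} applied to the (a.s.) convergent sequence $(\mathbb{G}(n,W))_n$ yields $\liminf_n\TIL(F,\mathbb{G}(n,W))/n\ge\TIL(F,W)$ almost surely, and hence in probability.

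For the upper bound I would exploit LP-duality directly. Fix $\varepsilon>0$. By Theorem~\ref{thm:LPdualityGraphons} and~\eqref{eq:fcovattained} there is a fractional $F$-cover $\mathfrak{c}$ of $W$ with $\|\mathfrak{c}\|=\FCOV(F,W)=\TIL(F,W)$. Let $x_1,\ldots,x_n\in\Omega$ be the random points that define $\mathbb{G}(n,W)$, and set $\mathfrak{c}^*(i)=\mathfrak{c}(x_i)$ for $i\in[n]$. For any \emph{injective} copy $(i_1,\ldots,i_k)$ of $F$ in $\mathbb{G}(n,W)$, edge sampling forces $W^{\otimes F}(x_{i_1},\ldots,x_{i_k})>0$; since the ``bad set'' $\mathcal{F}_F(W)\cap\{\sum_j\mathfrak{c}(y_j)<1\}$ is $\nu^k$-null by definition of a graphon cover, Fubini's theorem shows that the expected number of injective copies violating the cover inequality is zero. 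So $\mathfrak{c}^*$ is almost surely a cover on all injective copies, and by the finite LP-duality $\FTIL(F,\mathbb{G}(n,W))=\FCOV(F,\mathbb{G}(n,W))\le\|\mathfrak{c}^*\|$. The strong law of large numbers then gives $\|\mathfrak{c}^*\|/n=\frac{1}{n}\sum_i\mathfrak{c}(x_i)\to\int\mathfrak{c}=\TIL(F,W)$ almost surely.

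The main technical obstacle is that the argument above does not yet address \emph{non-injective} copies, i.e.\ those with $i_a=i_b$ for some pair $\{a,b\}\notin E(F)$: these correspond to diagonal subsets of $\Omega^k$, which are $\nu^k$-null, so the a.e.\ cover property of $\mathfrak{c}$ is silent on them. I would bypass this by replacing $\mathfrak{c}$, up to an arbitrarily small extra mass, with a step cover $\mathfrak{c}_\eta=\sum_\ell a_\ell\mathbf{1}_{P_\ell}$ associated with a sufficiently fine measurable partition $\Omega=\bigsqcup_\ell P_\ell$ and coefficients $a_\ell=\ESSSUP_{P_\ell}\mathfrak{c}$; this dominates $\mathfrak{c}$ pointwise, hence remains a cover of $W$, and by the Lebesgue differentiation theorem its mass $\sum_\ell\nu(P_\ell)a_\ell$ can be made as close to $\|\mathfrak{c}\|$ as desired by refining the partition. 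Since $\mathfrak{c}_\eta$ is constant on each part, the cover inequality $\sum_j a_{\ell_j}\ge1$ for a realizable part-pattern $(\ell_1,\ldots,\ell_k)$ is unaffected by coincidences among coordinates, so the lift $i\mapsto\mathfrak{c}_\eta(x_i)$ covers every copy of $F$ in $\mathbb{G}(n,W)$, injective or not. Letting $\eta\to0$ completes the proof.
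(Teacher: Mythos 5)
Your proposal takes essentially the same route as the paper's proof: the lower bound comes from Theorem~\ref{thm:lowersemicontgraphs} together with the a.s.~cut-distance convergence of $\mathbb{G}(n,W)$ to $W$, and the upper bound comes from Theorem~\ref{thm:LPdualityGraphons} by pushing an (almost) optimal fractional $F$-cover of $W$ down to the random graph and invoking the law of large numbers, with the key device being the passage to a finite-valued step cover so that the pointwise (rather than merely a.e.) cover condition is controlled by a finite list of part-patterns. You even articulate explicitly the subtlety that non-injective homomorphic copies of $F$ live on $\nu^{k}$-null diagonals, so the a.e.\ cover inequality for a genuine measurable $\mathfrak{c}$ is silent on them; the paper uses the very same fix (it rounds $\mathfrak{c}$ up to the nearest multiple of $\nicefrac{1}{2\ell}$, producing a finite step cover) but does not flag the issue.

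Two quibbles. First, your specific construction of the step cover via $a_\ell=\ESSSUP_{P_\ell}\mathfrak{c}$ on an ``arbitrary sufficiently fine partition'' and an appeal to the Lebesgue differentiation theorem is shakier than it looks: for a generic refining sequence of partitions, $\sum_\ell\nu(P_\ell)\ESSSUP_{P_\ell}\mathfrak{c}$ need not tend to $\int\mathfrak{c}$ (it is the averaged integrals, not the essential suprema, that Lebesgue differentiation controls). The clean way is the paper's: take the parts to be the level sets of $\lceil 2\ell\,\mathfrak{c}\rceil/(2\ell)$, which gives $\|\mathfrak{c}'\|\le\|\mathfrak{c}\|+\nicefrac{1}{2\ell}$ immediately. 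Second, your assertion that the step structure by itself makes the cover ``unaffected by coincidences'' is not the whole story: one must also rule out a part-pattern that is witnessed only on a diagonal (so $\int_{\prod_j P_{\ell_j}}W^{\otimes F}=0$ yet a positive $\nu^{k-1}$-measure of diagonal tuples have $W^{\otimes F}>0$). That this cannot happen follows from the product form of $W^{\otimes F}$ together with $W\le1$: if $a$, $b$ are non-adjacent in $F$ and are identified, then for each fixed $\mathbf{x}_{-ab}$ the $y$-integral of $\prod_{aj\in E(F)}W(y,x_j)\prod_{bj\in E(F)}W(y,x_j)$ is bounded above by $\min\bigl(\int_y\prod_{aj}W,\int_y\prod_{bj}W\bigr)$, whose product with the remaining factor vanishes a.e.\ whenever the full non-diagonal integral does. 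Neither you nor the paper spell this out, but once it is observed, both arguments close.
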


\begin{proof}
\noindent It is well-known (see e.g.~\cite[Lemma 10.16]{Lovasz2012})
that the sequence of graphs $\left(\mathbb{G}(n,W)\right)_{n}$ converges
to $W$ in the cut-distance almost surely. Thus by Theorem~\ref{thm:lowersemicontgraphs},
$\frac{\TIL\left(F,\mathbb{G}(n,W)\right)}{n}$ is asymptotically
almost surely at least $\TIL(F,W)-o(1)$ . The analogous statement
for $\frac{\FTIL\left(F,\mathbb{G}(n,W)\right)}{n}$ holds, as for
any graphs $F$ and $G$ we have $\TIL(F,G)\le\FTIL\left(F,G\right)$.

\medskip{}

\noindent Now, we pick an arbitrary $\ell\in\mathbb{N}$ and we show
that asymptotically almost surely, $\frac{\FTIL\left(F,\mathbb{G}(n,W)\right)}{n}$
is at most $\TIL(F,W)+\nicefrac{3}{\ell}$. Let us apply Theorem~\ref{thm:LPdualityGraphons}
and fix a fractional $F$-cover $\mathfrak{c}:\Omega\rightarrow[0,1]$
of size $\TIL(F,W)$. Let us round $\mathfrak{c}$ up to the closest
multiple of $\nicefrac{1}{2\ell}$. This way, the size of the modified
fractional $F$-cover $\mathfrak{c}'$ increased by at most $\nicefrac{1}{2\ell}$.
For $i=0,\ldots,2\ell$, define $\Omega_{i}$ to be the preimage of
$\nicefrac{i}{2\ell}$ under $\mathfrak{c}'$. Since $\mathfrak{c}'$
is a fractional $F$-cover, we have that for each $k$-tuple $i_{1},i_{2},\ldots,i_{k}$
with $\sum_{j}i_{j}<2\ell$ that 
\[
\int_{\left(x_{1},\ldots,x_{k}\right)\in\prod_{\ell=1}^{k}\Omega_{i_{\ell}}}W^{\otimes F}(x_{1},\ldots,x_{k})\diffr\nu^{k}=0\;.
\]
Since the integrand is non-negative, we get $W^{\otimes F}(\mathbf{x})=0$
for almost every $k$-tuple $\mathbf{x}=(x_{1},\ldots,x_{k})$ as
above. Then, for such a tuple $\mathbf{x}$, there exist two indices
$p_{\mathbf{x}},q_{\mathbf{x}}$, $1\le p_{\mathbf{x}}<q_{\mathbf{x}}\le k$,
$p_{\mathbf{x}}q_{\mathbf{x}}\in E(F)$ such that 
\begin{equation}
\mbox{\ensuremath{W(x_{p_{\mathbf{x}}},x_{q_{\mathbf{x}}})=0}.}\label{eq:WzeroRect}
\end{equation}

Let us now sample the random graph $G\sim\mathbb{G}(n,W)$. Let $y_{1},y_{2},\ldots,y_{n}\in\Omega$
be the random points that represent the $n$ vertices $1,2,\ldots,n$
of $\mathbb{G}(n,W)$. By the Law of Large Numbers, asymptotically
almost surely we have for each $j=0,1,\ldots,2\ell$, 
\begin{equation}
\left|\left\{ y_{1},\ldots,y_{n}\right\} \cap\Omega_{j}\right|\le\left(\nu(\Omega_{j})+\nicefrac{1}{\ell^{2}}\right)n\;.\label{eq:aec}
\end{equation}
Define a function $\mathfrak{b}:V(G)\rightarrow[0,1]$ by mapping
a vertex $i$ to $\mathfrak{c}'(y_{i})$. We then have
\begin{eqnarray*}
\|\mathfrak{b}\| & = & \sum_{i=1}^{n}\mathfrak{c}'(y_{i})=\sum_{j=0}^{2\ell}\left|\left\{ y_{1},\ldots,y_{n}\right\} \cap\Omega_{j}\right|\cdot\frac{j}{2\ell}\\
 & \overset{\eqref{eq:aec}}{\le} & \sum_{j=0}^{2\ell}\left(\nu(\Omega_{j})+\nicefrac{1}{\ell^{2}}\right)n\cdot\frac{j}{2\ell}\le(\|\mathfrak{c'}\|+\nicefrac{2}{\ell})n\le(\|\mathfrak{c}\|+\nicefrac{3}{\ell})n\;.
\end{eqnarray*}
We claim that $\mathfrak{b}$ is a fractional $F$-cover of $G$ with
probability~1. Indeed, let $m_{1},m_{2},\ldots,m_{k}\in V(G)$ be
arbitrary with 
\begin{equation}
\sum_{j=1}^{k}\mathfrak{b}(m_{j})<1\;.\label{eq:bm}
\end{equation}
It is our task to show that there exist $pq\in E(F)$ such that $m_{p}m_{q}\not\in E(G)$.
To this end, we observe that (\ref{eq:bm}) translates as $y_{m_{1}}\in\Omega_{i_{1}},y_{m_{2}}\in\Omega_{i_{2}},\ldots,y_{m_{k}}\in\Omega_{i_{k}}$
for some $k$-tuple $i_{1},i_{2},\ldots,i_{k}$ with $\sum_{j}i_{j}<2\ell$.
Thus, (\ref{eq:WzeroRect}) applies for some numbers $p,q\in[k]$.
But then indeed the edge $m_{p}m_{q}$ was included with probability
$W(y_{m_{p}},y_{m_{q}})=0$ in $\mathbb{G}(n,W)$, as was needed.

Since $\ell$ was arbitrary, we obtain that $\frac{\FTIL\left(F,\mathbb{G}(n,W)\right)}{n}$
is asymptotically almost surely at most $\TIL(F,W)+o(1)$. The analogous
statement for $\frac{\TIL\left(F,\mathbb{G}(n,W)\right)}{n}$ follows
from the fact that for any graphs $F$ and $G$ we have $\TIL(F,G)\ge\FTIL\left(F,G\right)$.
\end{proof}
It is plausible that Theorem~\ref{thm:InhomogeneousRandomGraph}
can be be extended even to sparse inhomogeneous random graphs $\mathbb{G}(n,p_{n}W)$,
where $\left(p_{n}\right)_{n}$ is a sequence of positive reals tending
to zero. 

As an immediate corollary of Theorem~\ref{thm:InhomogeneousRandomGraph}
and Theorem~\ref{thm:lowersemicontgraphs} we obtain the following
corollary.
\begin{cor}
\label{cor:limesinferior}Suppose that $F$ is an arbitrary graph
and $W$ is an arbitrary graphon. Then
\[
\liminf\frac{\TIL(F,G_{n})}{v(G_{n})}=\TIL(F,W)\;,
\]
 where the limit inferior ranges over all graph sequences $(G_{n})$
that converge to $W$.
\end{cor}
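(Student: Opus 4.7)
The plan is to combine the two indicated ingredients: Theorem~\ref{thm:lowersemicontgraphs} gives the $\geq$ direction for \emph{every} convergent sequence, while Theorem~\ref{thm:InhomogeneousRandomGraph} will supply a particular sequence attaining the value $\TIL(F,W)$, which yields the $\leq$ direction for the infimum.

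More precisely, first I would record that by Theorem~\ref{thm:lowersemicontgraphs} every sequence $(G_n)$ of growing orders converging to $W$ in the cut-distance satisfies $\liminf_n \TIL(F,G_n)/v(G_n)\ge\TIL(F,W)$, so that the left-hand side of the claimed equality is at least $\TIL(F,W)$. For the reverse inequality it suffices to exhibit one sequence $(G_n)$ converging to $W$ for which the ratios tend to $\TIL(F,W)$.

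The natural candidate is the inhomogeneous random graph model: let $G_n$ be sampled from $\mathbb{G}(n,W)$ independently for each $n$. It is known (as cited in the proof of Theorem~\ref{thm:InhomogeneousRandomGraph}, e.g.~\cite[Lemma 10.16]{Lovasz2012}) that $(G_n)$ converges to $W$ in the cut-distance almost surely. On the other hand, Theorem~\ref{thm:InhomogeneousRandomGraph} asserts that $\TIL(F,G_n)/n$ converges in probability to $\TIL(F,W)$. A standard subsequence argument then produces a subsequence along which this convergence is almost sure. Intersecting this full-measure event with the full-measure event of cut-distance convergence, I can fix one realization $(G_{n_k})$ which both converges to $W$ in cut-distance and satisfies $\TIL(F,G_{n_k})/v(G_{n_k})\to\TIL(F,W)$. (If one wants a full sequence rather than a subsequence, one can interleave the sparse subsequence with arbitrary convergent fillers, or equivalently invoke the almost-sure convergence from Theorem~\ref{thm:InhomogeneousRandomGraph} rather than just convergence in probability.)

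There is no real obstacle here: the only step that requires care is the passage from convergence in probability in Theorem~\ref{thm:InhomogeneousRandomGraph} to the existence of a single deterministic sequence realising both almost sure events simultaneously, which is handled by taking a subsequence and invoking Borel--Cantelli, or equivalently by noting that a countable intersection of full-measure events is still of full measure on the underlying probability space on which all the graphs $\mathbb{G}(n,W)$ are coupled. Combining the two directions gives the required equality.
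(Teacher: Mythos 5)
Your proposal is correct and takes essentially the same route as the paper: the paper explicitly presents this corollary as an immediate consequence of Theorem~\ref{thm:lowersemicontgraphs} (for the lower bound on every convergent sequence) and Theorem~\ref{thm:InhomogeneousRandomGraph} (to exhibit a sequence attaining the value). The only detail the paper leaves implicit is the derandomization step passing from convergence in probability to a single realization, which you handle correctly by extracting an almost-surely convergent subsequence and intersecting with the full-measure event of cut-distance convergence.
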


\section{A concluding remark: Analytic approach to linear programming in the
setting of graphons}

Our main results are proven by first approximating a graphon by a
step-function and then applying discrete tools, such as linear programming.
Arguments that would work directly in the setting of functional analysis
would certainly be much shorter and more elegant. As was pointed out
by the referee, some such tools are already available in~\cite{MR893179}
and~\cite{MR1980661}, but we were unable to make use of them in
the current context. In a recent paper~\cite{HlaRoch:IndepCliCol},
Hladký and Rocha investigate concepts of the chromatic number and
the clique number of graphons. Unlike in finite graphs, the LP duality
fails for the fractional versions of these parameters.

\section{Acknowledgments}

JH would like thank Dan Král and András Máthé for useful discussions
that preceded this project. He would also like to thank Martin Doležal
for the discussions they have had regarding functional analysis. We
thank two anonymous referees for their comments.

Part of this paper was written while JH was participating in the program
\emph{Measured group theory} at The Erwin Schrödinger International
Institute for Mathematics and Physics.

\medskip{}

The contents of this publication reflects only the authors' views
and not necessarily the views of the European Commission of the European
Union. This publication reflects only its authors' view; the European
Research Council Executive Agency is not responsible for any use that
may be made of the information it contains.

\bibliographystyle{plain}
\bibliography{bibl}

\end{document}